\newcommand{\abs}[1]{{\left|#1\right|}}
\newcommand{\norma}[1]{{\left\Vert#1\right\Vert}}
\def\XXint#1#2#3{{\setbox0=\hbox{$#1{#2#3}{\int}$}
    \vcenter{\hbox{$#2#3$}}\kern-.5\wd0}}
\theoremstyle{definition}
\newtheorem{definizione}{Definition}[section]
\theoremstyle{plain}
\newtheorem{theorem}{Theorem}[section]
\newtheorem{lemma}[theorem]{Lemma}
\newtheorem{prop}[theorem]{Proposition}
\newtheorem{corollario}[theorem]{Corollary}
\theoremstyle{definition}
\newtheorem{esempio}{Example}[section]
\newtheorem{oss}[esempio]{Remark}
\newtheorem*{open*}{Open problems}
\DeclareMathOperator{\R}{\mathbb{R}}
\newcommand{\OO}{\text{\normalfont  O}}
\newcommand{\myfootnote}[2]{\begingroup
	\def\@makefnmark{}%
	\addtocounter{footnote}{-1}%
	\footnote{\textbf{#1} #2}
	\endgroup}
\title{On the stability of the annulus for the torsion of multiply connected domains}
\author{V. Amato, L. Barbato}
\date{}
\newcommand{\Addresses}{{
\bigskip 
  \footnotesize 
 \textit{E-mail address}, V. ~Amato (corresponding author): \texttt{v.amato@ssmeridionale.it} 

     \medskip 

 \noindent \textit{E-mail address}, L.~Barbato: \texttt{l.barbato@ssmeridionale.it} 
  
   \medskip 
 
  \textsc{Mathematical and Physical Sciences for Advanced Materials and Technologies, Scuola Superiore Meridionale, Largo San Marcellino 10, 80138 Napoli, Italy. }

 \par\nopagebreak 

}}
\begin{document}
\maketitle
\begin{abstract}
We establish a quantitative version of the isoperimetric inequality for the torsional rigidity of multiply connected domains, among sets with
given area and with given joint area of the holes.

Since the optimal shape is the annulus, we study how a domain approaches an annular configuration when its torsional rigidity is close to optimal. Our result shows that when the torsional rigidity is nearly optimal, the domain $\Omega$ must be close to an annulus.

\medskip

\textsc{Keywords:}  Laplace operator, stability, comparison results.\\
\textsc{MSC 2020:}  35J05, 35B35, 35B45.
\end{abstract}
\section{Introduction}

The aim of this paper is to establish a stability result for the isoperimetric inequality associated with the torsional rigidity of multiply connected domains.

Let us consider a homogeneous beam whose cross-section is a multiply connected domain $\Omega$.

\begin{definizione}
    \label{multiplyconnected}
    A set $\Omega$ is a multiply connected domain if  $\Omega=G\setminus S$ and the following properties hold:
\begin{itemize}
    \item[\emph{(i)}] there exist $G,\Omega_1,\dots,\Omega_m$ bounded Lipschitz domain of $\R^n$; 
    \item[\emph{(ii)}] $\overline{\Omega}_i\subset G$ for $i=1,\dots,m$;
    \item[\emph{(iii)}]$\overline{\Omega}_i\cap\overline{\Omega}_j=\emptyset$ if $i\neq j$;
    \item[\emph{(iv)}]$S=\bigcup\limits_{i=1}^m\overline{\Omega}_i$.    
\end{itemize}
\end{definizione}

It is well known that the torsional rigidity of $\Omega$, namely that of a beam with cross-section $\Omega$, can be expressed as the maximum of a Rayleigh quotient (see, for instance, \cite{B,PW}), that is
\begin{equation}\label{torsione}
    T(\Omega)= \max\left\{ \frac{\displaystyle \left(\int_G \varphi\, dx\right)^2}{\displaystyle \int_G\abs{\nabla\varphi}^2\,dx} : \, \varphi \in H^1_0(G), \varphi\equiv \text{ constant on }  \Omega_i \, \forall i=1, \dots m  \right\}.
\end{equation}

In the case of simply connected domains ($m=0$), de Saint-Venant conjectured in 1856 that among all beams with cross-section of prescribed area, the one with circular cross-section has the largest torsional rigidity. A complete proof of this conjecture was given by Pólya in 1948, see \cite{P}.

The de Saint-Venant inequality was later generalized in several directions. For instance, in \cite{DW}, Díaz and Weinstein derived upper and lower bounds for the torsional rigidity of a beam, also in the case of multiply connected domains, in terms of its second-order moment.

In 1950, Pólya and Weinstein proved in \cite{PW} that among all beams with multiply connected cross-sections with given area and given total area of the holes, the annulus maximizes the torsional rigidity. Namely, using definition \eqref{torsione}, one has
\begin{equation}\label{polyaweinstein}
    T(\Omega) \leq T(\Omega^\OO),
\end{equation}
where $\Omega^\OO = G^\sharp \setminus S^\sharp$, and $G^\sharp$ and $S^\sharp$ denote concentric balls having the same measure as $G$ and $S$, respectively. This result has also been extended to any dimension and to certain degenerate elliptic operators in \cite{B,CH}.

Moreover, this problem can be viewed as a limiting case of a double-phase problem, which has been studied by several authors, see for example \cite{CLM,CMS,MNP}.

Once an isoperimetric inequality is established, a natural question concerns rigidity, namely, under which conditions the equality holds. In the simply connected case, this was proved by Pólya in \cite{P} and later revisited in \cite{BM}. For multiply connected domains, Pólya and Weinstein provided a partial answer: they showed that if equality holds in \eqref{polyaweinstein}, then the domain must have exactly one hole.

To the best of our knowledge, no sharper characterization of the equality case is available for multiply connected domains ($m \geq 1$). We prove the following result.

\begin{prop}\label{propmerc}
    Let $\Omega\subset\R^2$ satisfy Definition \ref{multiplyconnected}. If equality holds in \eqref{polyaweinstein}, that is
    \[T(\Omega)=T(\Omega^\OO),\]

     then, up to translation, $\Omega=\Omega^\OO$ and therefore there exists $x_0 \in \R^2$ such that $G=G^\sharp+ x_0$ and $S=S^\sharp+ x_0$.
\end{prop}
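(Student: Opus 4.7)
By the partial rigidity of Pólya--Weinstein recalled above, equality in \eqref{polyaweinstein} already forces $m=1$, so I may assume $\Omega=G\setminus\overline{\Omega_1}$. Let $u$ be the torsion function realizing the maximum in \eqref{torsione}, with constant value $c_1$ on $\partial\Omega_1$; since $-\Delta u=1\geq 0$ makes $u$ superharmonic, the strong maximum principle gives $\max_{\overline\Omega} u = c_1$, attained only on $\partial\Omega_1$, and $u<c_1$ strictly in $\Omega$. Extending $u$ by $c_1$ on $\Omega_1$ yields an admissible function $v\in H^1_0(G)$ with $T(\Omega)=\int_G v$.

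The plan is to revisit the proof of \eqref{polyaweinstein} and track the intermediate equalities. The argument, in the Talenti spirit, combines the planar isoperimetric inequality $P(\{v>t\})\geq 2\sqrt{\pi\mu(t)}$ (with $\mu(t):=|\{v>t\}|$), the Cauchy--Schwarz bound $P(\{v>t\})^2\leq(-\mu'(t))\int_{\{u=t\}}|\nabla u|\,d\mathcal{H}^1$ obtained from the coarea formula, and the identity $\int_{\{u=t\}}|\nabla u|\,d\mathcal{H}^1=\mu(t)$ obtained by integrating $-\Delta u=1$ over $\{u>t\}$ together with the variational constraint $\int_{\partial\Omega_1}\partial_\nu u\,d\mathcal{H}^1=|\Omega_1|$. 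These combine to give $-\mu'(t)\geq 4\pi$ on $(0,c_1)$, hence $c_1\leq c_1^{\OO}:=(|G|-|\Omega_1|)/(4\pi)$ and $T(\Omega)=\int_0^{c_1}\mu(t)\,dt\leq T(\Omega^{\OO})$. In the equality case all these estimates are saturated for a.e.\ $t\in(0,c_1)$: the isoperimetric equality forces each super-level set $\{v>t\}$ to be a disk (up to a null set), and the Cauchy--Schwarz equality forces $|\nabla u|$ to be constant on each level curve $\{u=t\}$.

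Letting $t\to c_1^-$ the nested disks $\{v>t\}$ shrink to $\overline{\Omega_1}$ in measure, so $\Omega_1$ is itself a disk; letting $t\to 0^+$ they exhaust $G$, so $G$ is also a disk. To conclude concentricity I would invoke a Serrin-type rigidity result for the overdetermined torsion problem on annular domains: on $\Omega$ we have $-\Delta u=1$ with $u=0$ on $\partial G$, $u=c_1$ on $\partial\Omega_1$, and (by continuity of the Cauchy--Schwarz equality as $t\to 0^+$) $|\nabla u|$ constant on $\partial G$, whence the outer disk $G$ must be concentric with the inner disk $\Omega_1$. Matching the areas then gives $G=G^\sharp+x_0$ and $\overline{\Omega_1}=S^\sharp+x_0$ for some $x_0\in\R^2$, so that $\Omega=\Omega^{\OO}+x_0$.

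\textbf{Main obstacle.} The isoperimetric equality alone only forces each individual level set to be a disk, not that the family is concentric. What locks the centres is the combination of the Cauchy--Schwarz equality (constancy of $|\nabla u|$ on every level line) with the torsion equation $-\Delta u=1$, interpreted through a Serrin-type rigidity for the overdetermined annular problem; carefully verifying its hypotheses --- in particular passing from ``$|\nabla u|$ constant on each interior level line'' to ``$|\nabla u|$ constant on $\partial G$'' --- and applying it in the multiply connected setting is the delicate technical core of the proof.
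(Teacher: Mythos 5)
Two genuine problems prevent your argument from closing. First, the opening claim that superharmonicity plus the strong maximum principle forces $\max_{\overline\Omega}u=c_1$ is false: a function with $-\Delta u=1\geq0$ satisfies a \emph{minimum} principle on the boundary, while its maximum may well be interior (think of a small hole placed close to $\partial G$: then $c_1$ is small and $u$ peaks in the bulk of $\Omega$). You use this claim twice — to write $T(\Omega)=\int_0^{c_1}\mu(t)\,dt$ and to let the superlevel disks ``shrink to $\overline{\Omega}_1$'' as $t\to c_1^-$ — so both steps are unjustified as written; in the equality case one can recover $\|u\|_\infty=c_1$ and $\abs{\{u\geq c_1\}\cap\Omega}=0$, but only \emph{after} establishing $\tilde u^\sharp=\tilde V$, which is how the paper proceeds (equality in \eqref{saintven} forces $\zeta^{-1}=\overline{\nu}$ a.e., hence $\tilde u^\sharp=\tilde V$ by \eqref{puntuale}). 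Second, and more seriously, the concentricity step — which you yourself flag as the technical core — is not proved: you ``would invoke'' a Serrin-type rigidity theorem for the overdetermined annular problem, but you neither cite a precise statement nor verify its hypotheses (in particular, passing from ``$\abs{\nabla u}$ constant on a.e.\ interior level line'' to ``$\abs{\nabla u}$ constant on $\partial G$'' requires boundary regularity and a limiting argument you do not supply, and the overdetermination you have is only on the outer boundary, the inner one carrying just constant Dirichlet data and the flux condition). Equality of each level set with \emph{some} disk plus constancy of $\abs{\nabla u}$ on level lines does not by itself lock the centres, so as it stands the proof stops exactly where the difficulty begins.

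For comparison, the paper closes this gap by a different mechanism: after deducing $\tilde u^\sharp=\tilde V$ and equality of the (weighted) Dirichlet integrals \eqref{equalitytorsioni}--\eqref{equalityint}, it applies the adapted Brothers--Ziemer theorem (Theorem \ref{brotherziemer}), whose non-degeneracy hypothesis holds because $\tilde V$ is strictly radially decreasing off its top level; this yields directly that $u$ is a translate of $u^\sharp$, hence that all superlevel sets are concentric balls and $\Omega=\Omega^\OO+x_0$, with no recourse to overdetermined boundary-value problems and no need for the Pólya--Weinstein ``one hole'' reduction. If you want to salvage your route, you would need to either prove the annular Serrin-type rigidity you invoke (with the stated boundary data) or replace it by the Brothers--Ziemer argument, which is precisely what the equality in the Pólya--Szeg\H{o}-type step provides.
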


In Section \ref{section3}, we prove Proposition \ref{propmerc} as a particular case of a more general theorem valid in any dimension and for a broader class of linear elliptic operators.

Annuli are the only sets for which equality holds in \eqref{polyaweinstein}. This naturally leads to the question of stability.  More precisely, one would like to improve \eqref{polyaweinstein} by adding a remainder term on the left-hand side that quantifies how far a set is from the optimal annulus, that is the one with the same measure and the same hole measure.

To this end, we first introduce a way to measure the deviation from the corresponding annulus. We separately measure the distance of the outer boundary from a sphere and the distance of the holes from a ball. One possible tool is the Fraenkel asymmetry index; see, for instance, \cite{F} for its properties. This index is defined by
\begin{equation*}
	\alpha(\Omega) := \min_{x \in \mathbb{R}^{n}} \left\{ \frac{|\Omega \triangle B_r(x)|}{|B_r(x)|} \ : \ |B_r(x)| = |\Omega| \right\},
\end{equation*}
where $\triangle$ denotes the symmetric difference (see Subsection \ref{subsection2.3}).

We can now state our first main theorem.

\begin{theorem}\label{thm:dueasimmetrie}
     Let $\Omega\subset\R^2$ satisfy Definition \ref{multiplyconnected} such that $\displaystyle{\abs{S}\leq\frac{2}{3}\abs{G}}$. Then 
    \begin{equation*}
        T(\Omega^\OO)-T(\Omega)\geq \frac{1}{3^22^{9} \pi \gamma_2}\left[\abs{G}^2  \alpha^3(G)+\abs{S}^2\alpha^3(S)\right].
     \end{equation*}
    where $\gamma_2$ denotes the constant appearing in the quantitative isoperimetric inequality in dimension two (see \cite{FMP2}).
\end{theorem}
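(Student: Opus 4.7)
The plan is to adapt the classical Pólya–Szegő level-set machinery to the annular symmetrization of Pólya–Weinstein, quantifying each step of the argument by means of the Fusco–Maggi–Pratelli (FMP) stability inequality. The two terms $|G|^2\alpha^3(G)$ and $|S|^2\alpha^3(S)$ will arise from two distinct applications of FMP—one to the outer boundary $\{u=t\}$ and one to the inner boundary $\partial S$—combined by a simple $\tfrac{1}{2}$ splitting.

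Concretely, I would take $u\in H^1_0(G)$ realizing the maximum in \eqref{torsione}, normalized so that $-\Delta u=1$ on $\Omega$, $u=0$ on $\partial G$, $u\equiv c_i$ on each $\overline{\Omega}_i$, with $\int_{\partial\Omega_i}\partial_\nu u\,d\mathcal{H}^1=0$; then $T(\Omega)=\int_G u\,dx$. Setting $\mu(t)=|\{u>t\}|$, coarea and Cauchy–Schwarz yield
\[
\mathcal{H}^1(\{u=t\})^2 \;\le\; (-\mu'(t))\,\bigl(\mu(t)-|S\cap\{u>t\}|\bigr),
\]
where the second factor comes from $\int_{\{u=t\}}|\nabla u|\,d\mathcal{H}^1$ via the divergence theorem and the inner Neumann-type condition. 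For $t$ in the range where the holes are still contained in $\{u>t\}$, the total perimeter splits as $P(\{u>t\})=\mathcal{H}^1(\{u=t\})+P(S)$, and applying FMP to each piece produces simultaneously an improvement of order $\alpha^2(\{u>t\})/\gamma_2$ on the outer boundary and an improvement of order $\alpha^2(S)/\gamma_2$ on the inner boundary.

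Plugging these refinements into the Pólya–Szegő chain and integrating in $t$ recovers \eqref{polyaweinstein} with an explicit remainder. The $|G|^2\alpha^3(G)$ contribution is extracted by restricting the $t$-integration to a suitable interval $[0,t^\ast]$: on such an interval, a uniform gradient estimate near $\partial G$ forces $|\{u>t\}\triangle G|$ to be small, hence $\alpha(\{u>t\})\ge \tfrac{1}{2}\alpha(G)$, and optimizing $t^\ast$ to balance the quadratic FMP gain against the linear width of the interval gives the characteristic cubic exponent on $\alpha(G)$. The $|S|^2\alpha^3(S)$ contribution is already present in the $P(S)$ term and is extracted by a parallel optimization. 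The assumption $|S|\le\tfrac{3}{4}|G|$ enters precisely here: it keeps the outer and inner radii of the model annulus $\Omega^\OO$ separated by a definite amount, so that the two ranges of $t$ can be taken disjoint and the weights $\mu(t)$ and $|S|$ remain bounded below by definite fractions of $|G|$ and $|S|$, respectively.

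The technically most delicate step will be the quantitative transfer $\alpha(\{u>t\})\ge\tfrac{1}{2}\alpha(G)$: the Fraenkel asymmetry is not Lipschitz under generic $L^1$-perturbations, so one must exploit the radial structure of nearly optimal configurations—or equivalently the reverse triangle inequality for $\alpha$ combined with an explicit bound on $|\{u>t\}\triangle G|$—to obtain the needed continuity. Careful bookkeeping of the constants through FMP and through the optimization in $t$ should then yield exactly the coefficient $1/(3^2\,2^9\,\pi\gamma_2)$ stated in the theorem.
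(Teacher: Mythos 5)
Your normalization is not the Euler--Lagrange system of \eqref{torsione}: the maximizer satisfies $\int_{\partial\Omega_i}\partial_\nu u\,d\sigma=\abs{\Omega_i}$, not $0$ (see \eqref{torsionprob}), and this flux condition is precisely what makes the annulus extremal. With the correct condition, the divergence theorem gives $\int_{\{u=t\}}\abs{\nabla u}\,d\mathcal{H}^1=\mu_{\tilde u}(t)$, the measure of the superlevel set \emph{with the holes filled in}, not $\mu(t)-\abs{S\cap\{u>t\}}$. Your displayed inequality is therefore false already for the annulus itself: there $\mathcal{H}^1(\{u=t\})^2=(-\mu'(t))\,\pi\rho^2$ for the level circle of radius $\rho$, while your right-hand side is $(-\mu'(t))\,\pi(\rho^2-r^2)$. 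Moreover, under your zero-flux normalization the identity $T(\Omega)=\int_G u$ also fails, so the chain would not even reproduce \eqref{polyaweinstein}. A second consequence of the flux condition is that the isoperimetric step in the P\'olya--Weinstein chain is applied to the filled sets $\{\tilde u>t\}$, whose boundary is only $\{u=t\}$; the inner boundary $\partial S$ never enters it, so the splitting $P(\{u>t\})=\mathcal{H}^1(\{u=t\})+P(S)$ has no place in the argument.

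For this reason the $\abs{S}^2\alpha^3(S)$ term cannot be ``already present in the $P(S)$ term'': there is no route transferring the FMP excess of the holes' perimeter into the torsion deficit. Indeed the deficit does not control any perimeter excess of $S$ at all (wiggling $\partial S$ at fixed area leaves the torsion essentially unchanged while $P(S)$ blows up), and an isoperimetric comparison for $\{u>t\}\setminus S$ would be against a disk of area $\mu(t)-\abs{S}$, not an annulus, which is strictly weaker than what is needed. The paper controls $\alpha(S)$ by a genuinely different mechanism: it replaces $S$ with the pseudo-rearrangement set $\tilde S=D(\abs{S})$, bounds $\abs{S\setminus\tilde S}$ by the deficit through the structure of $\zeta^{-1}$ (Lemma \ref{lemma5.1}), propagates asymmetry \emph{outward} via Lemma \ref{propasiback}, and closes with a threshold argument near the top level using the pointwise bound of Corollary \ref{puntualedim2} and the claim \eqref{claimbho}. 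Your treatment of the outer term is in the spirit of Section \ref{section4}, but the transfer $\alpha(\{u>t\})\ge\tfrac12\alpha(G)$ is obtained there by the purely measure-theoretic Lemma \ref{propasi} combined with the threshold $s_G$ and a two-case comparison with $\tilde V$ (this is also where $\abs{S}\le\tfrac34\abs{G}$ is used), not by gradient estimates near $\partial G$, which are unavailable for Lipschitz domains; as written, that step and the whole inner-term extraction remain unproved.
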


The proof of this theorem relies on the propagation of asymmetry discussed in \cite{HN}, a technique that has been further developed and applied in several works, including \cite{AB,ABMP,ABCMP,BD,CLI,MS}.

We also point out that inequalities where annuli are optimal sets already appear in the literature, see \cite{Simon,CPP,GPPS,PPS,PPT}. For instance in \cite{CPP}, the authors studied a mixed eigenvalue problem on a domain with holes, providing an interpretation of the distance between $\Omega$ and the corresponding annulus. In that case as well, the proof is divided into two parts, dealing separately with the outer and inner boundaries.

Motivated by the classical Fraenkel asymmetry index, we introduce an \emph{annular asymmetry index} defined by
$$
\beta(\Omega) =\inf \{\abs{\Omega \triangle A} \, : \, A=B_1 \setminus B_2, \, \abs{B_1}= \abs{G} , \, \abs{B_2} = \abs{S}\},$$
which vanishes if and only if $\Omega$ is an annulus.

Using this notion, we state our second main result.
\begin{theorem}\label{thm:anellia}

 Let $\Omega\subset\R^2$ satisfy Definition \ref{multiplyconnected}. Then, there exists $C>0$ and $\overline{\theta}>0$, depending on $\abs{G}$ and $\abs{S}$, such that 
	\begin{equation*}
		T(\Omega^\OO)-T(\Omega) \geq C \beta^{\overline{\theta}}(\Omega).
	\end{equation*}
\end{theorem}

The proof of this theorem relies on the closeness of the torsion function to the one of the optimal annulus. Since the holes correspond to regions where the gradient of the torsion function vanishes, this closeness implies not only that the outer domain and the holes are close to balls (as shown in Theorem \ref{thm:dueasimmetrie}), but also that these balls are almost concentric.

The paper is organized as follows. In Section \ref{section2}, we introduce notation and preliminary results. Section \ref{section3} is devoted to the proof of rigidity. In Section \ref{section4}, we show that the outer domain is close to a ball; in Section \ref{section5}, we prove that the asymmetry of the holes is small; and in Section \ref{section6}, we study the almost radiality of the torsion function. Finally, in Section \ref{section7}, we present a list of open problems.

\section{Notation and preliminary}
\label{section2}
In the following, we will denote by $\abs{\Omega}$ the Lebesgue measure of an open and bounded set of $\R^n$, with $n\geq2$. Also, the $ L^p-$ norm will be denoted as $\norma{\cdot}_p$ if the space is clear by the context; otherwise, the space will be explicitly written as $\norma{\cdot}_{L^p(\Omega)}$.
\begin{definizione}
    Let $\Omega$ be a bounded open set and let $ E\subset\Omega$. Then, the \textbf{perimeter} of $E$ inside $\Omega$, denoted by $P(E,\Omega)$, is defined as
   
    $$P(E,\Omega) = \sup\left\{ \int_E \text{div} \varphi \, dx : \varphi \in C_c^\infty(\Omega), \norma{\varphi}_{\infty} \leq 1\right\}.$$
    If $\Omega = \R^n$, we denote $P(E):=P(E,\R^n)$.
\end{definizione}
Moreover, if $\Omega$ is an open set, the following coarea formula applies. Some references for results relative to the sets of finite perimeter and the coarea formula are, for instance, \cite{AFP,Fleming_Rishel,maggi2012sets}.
\begin{theorem}[Coarea formula]
	Let $\Omega \subset \mathbb{R}^n$ be an open set. Let $f\in W^{1,1}_{\text{loc}}(\Omega)$ and let $u:\Omega\to\R$ be a measurable function. Then,
	\begin{equation}
        	\label{coarea}
		{\displaystyle \int _{\Omega}u(x)|\nabla f(x)|dx=\int _{\mathbb {R} }dt\int_{\Omega\cap f^{-1}(t)}u(y)\, d\mathcal {H}^{n-1}(y)}.
	\end{equation}

Moreover, if  $f \in \text{BV}(\Omega)$, then, it holds the Fleming-Rishel formula, i.e.
\begin{equation}
	 \label{flemingrishel}
	\abs{Df}(\Omega) = \int_{-\infty}^{+\infty} P(\Set{ u>t},\Omega ) \, dt.
\end{equation}
\end{theorem}

The rest of this section is devoted to the introduction of the rearrangement of functions and both some qualitative and quantitative properties. 
\subsection{Rearrangement of functions}
Some general results and applications of the theory of rearrangements can be found in \cite{K}.

 \begin{definizione}\label{distribution:function}
	Let $u: \Omega \to \R$ be a measurable function. The \emph{distribution function} of $u$ is the function $\mu_u : [0,+\infty[\, \to [0, +\infty[$ defined as the measure of the superlevel sets of $u$, i.e.
	$$
	\mu_u(t)= \abs{\Set{x \in \Omega \, :\,  \abs{u(x)} > t}}.
	$$
\end{definizione}
Also using the Coarea formula \eqref{coarea}, it is possible to deduce an explicit expression for $\mu_u$ in terms of integrals of $u$

\begin{equation*}
    \mu_u(t)=\abs{\{\abs{u}>t\}\cap \{|\nabla u |=0\}}+ \int_t^{+\infty} \left(\int_{u=s}\frac{1}{\abs{\nabla u}}\, d\, \mathcal{H}^{n-1}\right)\, ds.
\end{equation*}

As a consequence, for almost all $t\in (0,+\infty)$,
    \begin{equation}\label{brothers1}
        \infty>-\mu_u'(t)\geq\displaystyle \int_{u=t}\dfrac{1}{|\nabla u|}d\mathcal{H}^{n-1}.
    \end{equation}
Moreover, if $\mu_u$ is absolutely continuous, equality holds in \eqref{brothers1}.

\begin{definizione} \label{decreasing:rear}
	Let $u: \Omega \to \R$ be a measurable function, the \emph{decreasing rearrangement} of $u$, denoted by $u^\ast(\cdot)$, is defined as
$$u^*(s)=\inf\{t\geq 0:\mu_u(t)<s\}.$$
	\end{definizione}
 \begin{oss}
	Let us notice that the function $\mu_u(\cdot)$ is decreasing and right continuous and thus the function $u^\ast(\cdot)$ can be seen as its generalized inverse.
 \end{oss}
 Also by this remark, one can prove using Definitions \ref{distribution:function} and \ref{decreasing:rear} that
	 $$u^\ast (\mu_u(t)) \leq t, \quad \forall t\ge 0,$$ 
  $$\mu_u (u^\ast(s)) \leq s \quad \forall s \ge 0.$$ 
	\begin{definizione}
	 The \emph{Schwartz rearrangement} of $u$ is the function $u^\sharp $ whose superlevel sets are balls with the same measure as the superlevel sets of $u$. 
	\end{definizione}
		We emphasize the relation between $u^\sharp$ and $u^*$:
	$$u^\sharp (x)= u^*(\omega_n\abs{x}^n),$$
 where $\omega_n$ stands for the measure of the unit ball in $\R^n$.
  By their construction, the functions $u$, $u^*$ and $u^\sharp$ have the same distribution function, or they are equi-distributed, and it holds
$$ \displaystyle{\norma{u}_{p}=\norma{u^*}_{p}=\lVert{u^\sharp}\rVert_{p}}, \quad \text{for all } p\ge1.$$

The absolute continuity of $\mu_u$ is ensured by the following lemma (we refer, for instance, to \cite{BZ,CF2}).

\begin{lemma}

Let $u\in W^{1,p}(\R^n)$, with $p\in(1,+\infty)$. The distribution function $\mu_u$ of $u$ is absolutely continuous if and only if 
\begin{equation*}
    \abs{\{0<\abs{u}<  ||u^\sharp||_\infty\}\cap \{|\nabla u^\sharp| =0\}}=0.
\end{equation*}

\end{lemma}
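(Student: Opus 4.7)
The plan is to apply the coarea-based identity for the distribution function directly to the Schwartz rearrangement $u^\sharp$ and to exploit its radial structure. Using $\mu_u=\mu_{u^\sharp}$ and that $u^\sharp\in W^{1,p}(\R^n)$ (by P\'olya--Szeg\H{o}), the identity recalled just before \eqref{brothers1} gives
\[\mu_u(t)=h^\sharp(t)+F(t),\qquad h^\sharp(t):=\abs{\{u^\sharp>t\}\cap\{\abs{\nabla u^\sharp}=0\}},\]
where $F(t):=\int_t^{+\infty}\int_{\{u^\sharp=s\}}\abs{\nabla u^\sharp}^{-1}\,d\mathcal{H}^{n-1}\,ds$ is automatically absolutely continuous. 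Hence $\mu_u$ is AC on $(0,M)$, with $M=\norma{u^\sharp}_\infty$, if and only if the non-increasing function $h^\sharp$ is AC there. The radial non-increasing structure of $u^\sharp$ lets me split, for each $t\in(0,M)$,
\[h^\sharp(t)=\abs{\{u^\sharp=M\}}+\abs{\{t<u^\sharp<M\}\cap\{\abs{\nabla u^\sharp}=0\}},\]
in which the first summand is constant in $t$ (the ``top plateau'' of $u^\sharp$) and the second is monotone non-increasing, tending to $\abs{\{0<u^\sharp<M\}\cap\{\abs{\nabla u^\sharp}=0\}}$ as $t\to 0^+$.

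For the implication $(\Leftarrow)$, the geometric hypothesis makes the second summand vanish identically on $(0,M)$, so $h^\sharp$ is constant, hence AC, and $\mu_u$ is AC. For $(\Rightarrow)$, assuming $\mu_u$ is AC on $(0,M)$, the equality case of \eqref{brothers1} (recalled in the excerpt precisely under this hypothesis) applied to $u^\sharp$ gives $-\mu_u'(t)=\int_{\{u^\sharp=t\}}\abs{\nabla u^\sharp}^{-1}\,d\mathcal{H}^{n-1}$ a.e.; differentiating $\mu_u=h^\sharp+F$ then yields $(h^\sharp)'=0$ a.e. Since $h^\sharp$ is AC as a difference of two AC functions and non-increasing, it must be constant on $(0,M)$, and the decomposition of $h^\sharp$ above forces the stated condition by letting $t\to 0^+$.

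The main technical point I expect is the clean transfer of the Brothers--Ziemer equality case from $u$ to $u^\sharp$ via $\mu_u=\mu_{u^\sharp}$: it is this step that rules out any singular-continuous part of $\mu_u'$ and, through the ``constant $+$ vanishing'' splitting of $h^\sharp$, converts analytic absolute continuity into the geometric statement about the critical set of $u^\sharp$ on its intermediate levels. Once this is in place, the remaining manipulations are standard for non-increasing one-variable functions, with the radial structure of $u^\sharp$ doing all the geometric work (intermediate plateaus of $u^\sharp$ correspond exactly to flat pieces of $u^\ast$ and thus to jumps of $\mu_u$).
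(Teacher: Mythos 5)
Two preliminary remarks: the paper itself does not prove this lemma (it is quoted with a reference to the literature), so you are effectively being compared with the standard argument; and the condition must be read with $u^\sharp$ inside (as you did) and ``absolutely continuous'' must be understood on the interval $(0,\norma{u^\sharp}_\infty)$, since a plateau of $u$ at its top level is compatible with the right-hand condition but produces a jump of $\mu_u$ at $t=\norma{u^\sharp}_\infty$. With that reading, your strategy is the standard one: apply the coarea identity to $u^\sharp$, write $\mu_u=h^\sharp+F$ with $F$ absolutely continuous, and reduce everything to $h^\sharp(t)=\abs{\{u^\sharp>t\}\cap\{\nabla u^\sharp=0\}}$. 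Your $(\Leftarrow)$ direction is correct and essentially complete.

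The gap is in $(\Rightarrow)$, at the step that carries all the weight: $(h^\sharp)'=0$ a.e. You obtain it by invoking the remark after \eqref{brothers1} (``if $\mu$ is absolutely continuous, equality holds''), applied to $u^\sharp$. That remark is itself stated without proof, and for a general $W^{1,p}$ function it is precisely the delicate point: it amounts to a Sard-type property (the critical set is mapped onto a Lebesgue-null set of values), which can fail for Sobolev and even $C^{n-1,\alpha}$ functions --- this is the co-area irregularity phenomenon of Almgren and Lieb --- so ``equality under mere absolute continuity of $\mu_u$'' cannot be taken as a black box; in fact, for a general function it is essentially equivalent to the implication you are trying to prove. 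What makes it true here is the specific radial, monotone structure of $u^\sharp$, and that argument is missing from your proof: since $u^\sharp\in W^{1,p}(\R^n)$, the profile $u^*$ is locally absolutely continuous on $(0,\abs{\{u>0\}})$, one checks that $h^\sharp(t)=\abs{\{s:\,u^*(s)>t,\ (u^*)'(s)=0\}}$, and a one-variable AC function maps $\{(u^*)'=0\}$ onto a null set of values (Luzin (N) plus $\abs{u^*(E)}\le\int_E\abs{(u^*)'}$); hence the variation of $h^\sharp$ is concentrated on the null set $u^*(\{(u^*)'=0\})$, i.e.\ $Dh^\sharp$ is singular, so $(h^\sharp)'=0$ a.e., and absolute continuity of $h^\sharp=\mu_u-F$ then forces $h^\sharp$ to be constant. (Your phrase ``AC and non-increasing, hence constant'' is not a valid inference by itself; it is the vanishing a.e.\ derivative that must be supplied as above.) Finally, a small but real omission: constancy of $k(t)=\abs{\{t<u^\sharp<\norma{u^\sharp}_\infty\}\cap\{\nabla u^\sharp=0\}}$ yields the conclusion only after you also let $t\to\norma{u^\sharp}_\infty^-$, where $k(t)\to0$ by continuity from above of the measure on sets of finite measure; letting $t\to0^+$ alone only identifies the constant, it does not show it is zero.
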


The decreasing rearrangement has a lot properties such as the Hardy-Littlewood inequality, see \cite{HLP}. 

\begin{theorem}[Hardy-Littlewood inequaliy]
Let $\Omega $ be a bounded and open set of $\R^n$ and let us consider $h\in L^p(\Omega)$ and $g\in L^{p'}(\Omega)$. Then 
    \begin{equation*}
 \int_{\Omega} \abs{h(x)g(x)} \, dx \le \int_{0}^{\abs{\Omega}} h^*(s) g^*(s) \, ds.
\end{equation*}
\end{theorem}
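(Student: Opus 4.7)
The plan is to use the layer--cake representation of nonnegative measurable functions to rewrite both sides as double integrals over superlevel sets, and then exploit the elementary bound $\abs{A\cap B}\le\min(\abs{A},\abs{B})$ on the left while obtaining an equality of the same type on the right.

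First I would write $\abs{h(x)}=\int_0^\infty \chi_{\{\abs{h}>s\}}(x)\,ds$ and similarly for $\abs{g}$. Substituting into $\int_\Omega \abs{hg}\,dx$ and applying Fubini (legitimate because $h\in L^p$, $g\in L^{p'}$ so the integrand is in $L^1$ by H\"older) yields
\[
\int_\Omega \abs{h(x)g(x)}\,dx=\int_0^\infty\!\!\int_0^\infty \abs{\{\abs{h}>s\}\cap\{\abs{g}>t\}}\,ds\,dt\le \int_0^\infty\!\!\int_0^\infty \min(\mu_h(s),\mu_g(t))\,ds\,dt.
\]

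For the right-hand side, I would run the same layer--cake decomposition on the nonnegative functions $h^\ast,g^\ast\colon[0,\abs{\Omega}]\to[0,+\infty)$. Here the crucial identification, which follows directly from Definition \ref{decreasing:rear} together with the right--continuity of the distribution function recalled in the remark, is
\[
\{s\in[0,\abs{\Omega}]:h^\ast(s)>\alpha\}=[0,\mu_h(\alpha)),\qquad \{s\in[0,\abs{\Omega}]:g^\ast(s)>\beta\}=[0,\mu_g(\beta)).
\]
Hence the intersection has measure exactly $\min(\mu_h(\alpha),\mu_g(\beta))$, so another application of layer--cake and Fubini gives
\[
\int_0^{\abs{\Omega}} h^\ast(s)g^\ast(s)\,ds=\int_0^\infty\!\!\int_0^\infty \min(\mu_h(\alpha),\mu_g(\beta))\,d\alpha\,d\beta.
\]
Comparing the two double integrals yields the claimed inequality.

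There is no real obstacle here: this is a classical computation and the only step one must treat carefully is the identification of the superlevel sets of $h^\ast$ with half--intervals $[0,\mu_h(\alpha))$, i.e.\ that $h^\ast$ is the generalized inverse of $\mu_h$. Everything else is bookkeeping with Fubini's theorem. I would present the two chains of equalities in parallel so that the reader sees at once that left and right sides differ only by the replacement of the exact measure $\abs{\{\abs{h}>s\}\cap\{\abs{g}>t\}}$ by its upper bound $\min(\mu_h(s),\mu_g(t))$.
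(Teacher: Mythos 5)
Your proof is correct, and the paper does not actually supply a proof of this statement: it records the Hardy--Littlewood inequality as a classical fact with a citation to Hardy--Littlewood--P\'olya, so there is nothing in the paper to compare against. The layer--cake argument you give is the standard one and is complete; every interchange of integrals involves nonnegative integrands, so Tonelli applies directly without the H\"older remark (which is harmless but unnecessary). One small precision worth noting: the superlevel set $\{s : h^\ast(s)>\alpha\}$ is $[0,\mu_h(\alpha)]$ or $[0,\mu_h(\alpha))$ depending on whether $\mu_h$ is locally constant past $\alpha$, but either way it has Lebesgue measure $\mu_h(\alpha)$, so the key identification $\abs{\{h^\ast>\alpha\}\cap\{g^\ast>\beta\}}=\min(\mu_h(\alpha),\mu_g(\beta))$ and the rest of the computation go through unchanged.
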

Furthermore, if we assume that the function $u$ is a \emph{Sobolev function}, i.e. $u\in W^{1,p}_0(\Omega)$, then, also $u^\sharp$ is a Sobolev function and its gradient does not increase under symmetrization, as a consequence of the  P\'olya-Szegő inequality (see \cite{PS}).
\begin{theorem}[P\'olya-Szegő inequality]
Let $\Omega $ be a bounded and open set of $\R^n$ and 
     $u \in W^{1,p}_0(\Omega)$. Then, $u^{\sharp} \in W^{1,p}_0(\Omega^\sharp)$ and
	\begin{equation}\label{poliaszego}
		\lVert \nabla u^{\sharp} \rVert_{p} \leq \norma{\nabla u}_{p}.
	\end{equation}
\end{theorem}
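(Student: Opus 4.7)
The plan is to prove the Pólya–Szegő inequality by slicing both $\|\nabla u\|_p^p$ and $\|\nabla u^\sharp\|_p^p$ according to the level sets of $u$ and $u^\sharp$, comparing the resulting expressions level by level, and exploiting the fact that Schwartz rearrangement makes all level sets balls (where the isoperimetric inequality becomes an equality).

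First, I would reduce to the case of a nonnegative, bounded $u\in W^{1,p}_0(\Omega)$ by the usual truncation and monotone convergence argument, so that $u$ and $u^\sharp$ have the same distribution function $\mu_u=\mu_{u^\sharp}$, both supported on some finite interval $[0,M]$. Applying the coarea formula \eqref{coarea} to the function $|\nabla u|^{p-1}$ gives
\begin{equation*}
\int_{\Omega}|\nabla u|^{p}\,dx=\int_{0}^{M}\left(\int_{\{u=t\}}|\nabla u|^{p-1}\,d\mathcal{H}^{n-1}\right)dt.
\end{equation*}
On each level set I would then apply the Hölder inequality with exponents $p$ and $p/(p-1)$ to the product $1 = |\nabla u|^{(p-1)/p}\cdot|\nabla u|^{-(p-1)/p}$, obtaining
\begin{equation*}
P(\{u>t\})^{p}\le\left(\int_{\{u=t\}}|\nabla u|^{p-1}\,d\mathcal{H}^{n-1}\right)\left(\int_{\{u=t\}}\frac{1}{|\nabla u|}\,d\mathcal{H}^{n-1}\right)^{p-1}.
\end{equation*}
Combined with \eqref{brothers1} this yields the pointwise a.e. bound
\begin{equation*}
\int_{\{u=t\}}|\nabla u|^{p-1}\,d\mathcal{H}^{n-1}\;\ge\;\frac{P(\{u>t\})^{p}}{(-\mu_u'(t))^{p-1}}.
\end{equation*}

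Second, I would invoke the classical Euclidean isoperimetric inequality to replace $P(\{u>t\})$ by $P(\{u>t\}^{\sharp})=P(\{u^{\sharp}>t\})$, since $\{u>t\}$ and $\{u^{\sharp}>t\}$ have the same measure. Plugging this back and integrating in $t$ gives
\begin{equation*}
\int_{\Omega}|\nabla u|^{p}\,dx\;\ge\;\int_{0}^{M}\frac{P(\{u^{\sharp}>t\})^{p}}{(-\mu_u'(t))^{p-1}}\,dt.
\end{equation*}
The final step is to recognize the right-hand side as $\int_{\Omega^{\sharp}}|\nabla u^{\sharp}|^{p}\,dx$. Since $u^{\sharp}$ is radial, its level sets are spheres and $|\nabla u^{\sharp}|$ is constant on each such sphere; therefore the Hölder step above is an equality for $u^{\sharp}$, and moreover $\mu_{u^\sharp}=\mu_u$ is absolutely continuous whenever the critical-set condition from the lemma holds, so \eqref{brothers1} is also an equality. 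Running the whole chain of computations backward for $u^{\sharp}$ produces exactly
\begin{equation*}
\int_{\Omega^{\sharp}}|\nabla u^{\sharp}|^{p}\,dx=\int_{0}^{M}\frac{P(\{u^{\sharp}>t\})^{p}}{(-\mu_u'(t))^{p-1}}\,dt,
\end{equation*}
which gives the inequality \eqref{poliaszego}. The Sobolev regularity $u^{\sharp}\in W^{1,p}_0(\Omega^{\sharp})$ is obtained as a byproduct: the formula above shows the right-hand side is finite, so $u^\sharp$ has a distributional gradient in $L^p$, and the vanishing outside $\Omega^\sharp$ follows from $u^\sharp=0$ there by construction.

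The main obstacle is the absolute continuity of $\mu_u$, which is needed to pass from the coarea inequality \eqref{brothers1} to a genuine identity in the computation for $u^{\sharp}$; here I would appeal to the lemma preceding Hardy–Littlewood in the excerpt, which characterizes this absolute continuity via the vanishing of $|\nabla u^{\sharp}|$ on the "plateau" set $\{0<u<\|u^\sharp\|_\infty\}$. A secondary, more technical point is justifying the truncation and monotone passage to the limit for general, possibly unbounded $u\in W^{1,p}_0(\Omega)$, which is routine once the bounded case is established.
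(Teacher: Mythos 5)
The paper does not prove this result: it is recalled as a classical theorem with a citation to \cite{PS}, so there is no in-paper argument to compare against. Your sketch is the standard slicing proof (coarea formula, H\"older on the level sets, isoperimetric inequality) found, for instance, in Talenti's and Kesavan's treatments, and its main structure is correct.

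The one place where you misdiagnose the difficulty is the absolute-continuity issue. The identity you need for the reverse direction is that, for a.e.\ $t$,
\begin{equation*}
-\mu_u'(t) = \int_{\{u^\sharp=t\}}\frac{1}{\abs{\nabla u^\sharp}}\,d\mathcal{H}^{n-1},
\end{equation*}
and you propose to get this from the absolute continuity of $\mu_u$ via the lemma preceding Hardy--Littlewood. But that lemma only \emph{characterizes} when $\mu_u$ is absolutely continuous; it does not guarantee it, and for a general $u\in W^{1,p}_0$ the condition can fail. The point that actually closes the argument is different: for a radially decreasing function the set $\{u^\sharp>t\}\cap\{\nabla u^\sharp=0\}$ is a countable union of annuli, so the map $t\mapsto\abs{\{u^\sharp>t,\,\nabla u^\sharp=0\}}$ is a pure jump function and its a.e.\ derivative vanishes. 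Differentiating the coarea identity for $\mu_u$ then yields the equality above for a.e.\ $t$ without any absolute-continuity hypothesis, and the singular/jump contributions to $\mu_u'$ occur precisely where $\nabla u^\sharp=0$, i.e.\ where they add nothing to $\int\abs{\nabla u^\sharp}^p$. With this observation the gap you flagged is closed and the proof is sound; the truncation and monotone-limit step for unbounded $u$ is indeed routine.
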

Also, we recall the notion of pseudo-rearrangement of a function, introduced in \cite{AT}. Given $u$ measurable function in $\Omega$, $\forall s  \in [0, |\Omega|]$ we can find a subset $D_u(s)$ such that:
    \begin{enumerate}
        \item $|D_u(s)|=s$;
        \item  $s_1 \leq s_2
        \implies D_u(s_1)\subset D_u(s_2)$;
        \item $D_u(s)= \{x \in \Omega : \, |u(x)| >t\}$ if $s = \mu_u(t)$.
    \end{enumerate}
In other words, by the previous property, $D_u(s)$ coincides with a superlevel set whenever $u$ has no plateaus. When $u$ admits a plateau, $D_u(s)$ belongs to a continuous one-parameter family of subsets of $\{u \ge t\}$. In this latter case, such a one-parameter family is not unique.
    
Moreover, if $f\in L^p(\Omega)$ the function \[\int_{D_u(s)}f(x)\,dx\] is absolutely continuous, then there exists a function $F$ such that 
\begin{equation}\label{defpseudorear}
    \int_0^sF(t)\,dt=\int_{D_u(s)}f(x)\,dx.
\end{equation}
In particular, the following lemma \cite[Lemma 2.2]{AT} holds.
\begin{lemma}
    Let $f\in L^p(\Omega),p>1$. There is a sequence $\{F_k(s)\}$ of functions which have the same rearrangement $f^*$ of $f$, such that \[F_k(s)\rightharpoonup F(s)\quad\text{in }L^p(0,|\Omega|).\]
    If $f\in L^1(\Omega)$, it follows that: \[\lim_k\int_0^{|\Omega|}F_k(s)g(s)\,ds=\int_0^{|\Omega|}F(s)g(s)\,ds\] for each function $g$ belonging to the space $BV(0,|\Omega|)$.
\end{lemma}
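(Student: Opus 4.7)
The plan is to exploit the fact that $F$ is, in a distributional sense, the derivative of the absolutely continuous function $s \mapsto \int_{D(s)} f(x)\,dx$, and to construct $F_k$ by a ring-by-ring rearrangement procedure so that their primitives converge uniformly to that of $F$.

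First, I would discretize. For each $k$, partition $[0,|\Omega|]$ into $k$ equal subintervals $I_j^k=[s_{j-1},s_j]$ with $s_j=j|\Omega|/k$, and consider the corresponding "rings" $R_j^k=D(s_j)\setminus D(s_{j-1})$; by the monotonicity of the family $D(\cdot)$ these are pairwise disjoint and $|R_j^k|=|\Omega|/k=|I_j^k|$, and they cover $\Omega$ up to a negligible set. I then define $F_k$ on each $I_j^k$ to be (a translated copy of) the decreasing rearrangement of $f\chi_{R_j^k}$. Because $\{R_j^k\}$ is a disjoint partition of $\Omega$ and each piece of $F_k$ is equidistributed with the corresponding restriction of $f$, the whole function $F_k$ is equidistributed with $f$; in particular $\|F_k\|_{L^p(0,|\Omega|)}=\|f\|_{L^p(\Omega)}$, so $\{F_k\}$ is bounded in $L^p$.

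Next I would compare the primitives $\Phi_k(s)=\int_0^s F_k$ and $\Phi(s)=\int_0^s F=\int_{D(s)}f$. By construction $\Phi_k(s_j)=\int_{\bigcup_{i\le j}R_i^k}f=\int_{D(s_j)}f=\Phi(s_j)$, so the two primitives agree at the grid points. For $s\in(s_{j-1},s_j)$, Hölder's inequality gives
\[
|\Phi_k(s)-\Phi_k(s_{j-1})|\le \|F_k\|_p\,(|\Omega|/k)^{1/p'}=\|f\|_p\,(|\Omega|/k)^{1/p'},
\]
and similarly $|\Phi(s)-\Phi(s_{j-1})|\le\|f\|_p(|\Omega|/k)^{1/p'}$. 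Hence $\Phi_k\to\Phi$ uniformly on $[0,|\Omega|]$ with rate $O(k^{-1/p'})$. Weak convergence in $L^p$ follows by a standard density argument: for characteristic functions of intervals $\chi_{(a,b)}$ one has $\int F_k\chi_{(a,b)}=\Phi_k(b)-\Phi_k(a)\to\Phi(b)-\Phi(a)=\int F\chi_{(a,b)}$, and step functions are dense in $L^{p'}$, while the uniform $L^p$ bound on $F_k$ lets one pass through the density.

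Finally, for the second assertion, which uses only $f\in L^1$ and $g\in BV(0,|\Omega|)$, I would integrate by parts in the Stieltjes sense: writing $g$ as the difference of two monotone functions and using that $\Phi_k,\Phi$ are continuous,
\[
\int_0^{|\Omega|}F_k(s)g(s)\,ds=\Phi_k(|\Omega|)g(|\Omega|^-)-\int_0^{|\Omega|}\Phi_k(s)\,dg(s),
\]
and an analogous formula holds for $F$ and $\Phi$. Since the primitive argument for $p=1$ only gives $\Phi_k\to\Phi$ pointwise at the $s_j$'s, the delicate step here is to promote this to convergence against the finite measure $dg$; I would do it by a three-epsilon argument based on the uniform bound $|\Phi_k(s)|,|\Phi(s)|\le\|f\|_1$, the equicontinuity of $\Phi_k$ "at the grid scale" (which comes from the absolute continuity of the set function $E\mapsto\int_E f$ applied to the small rings $R_j^k$), and the fact that $|dg|$ is a finite Borel measure and hence can be approximated by measures supported on finitely many grid points. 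This equi-absolute-continuity of the primitives is the main obstacle in the $L^1$ case, since without it one cannot convert the pointwise, grid-point convergence into convergence of Stieltjes integrals against $dg$.
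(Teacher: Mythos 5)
Your argument is correct, and it is essentially the standard one: the paper does not prove this lemma at all, but quotes it from Alvino--Trombetti (cited as \cite[Lemma 2.2]{AT}), where the proof proceeds exactly by your ring-by-ring construction on the nested family $D(s)$, comparison of the primitives at the grid points, and duality against step functions. Three small points would tighten your write-up. First, on each $I_j^k$ the function $F_k$ should be an equimeasurable (sign-preserving) copy of $f|_{R_j^k}$ rather than the decreasing rearrangement of $\abs{f}\chi_{R_j^k}$; otherwise $\Phi_k(s_j)=\int_{D(s_j)}\abs{f}$ instead of $\int_{D(s_j)}f$ and the primitives no longer match $\Phi$. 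This is immaterial in the paper's application, where $f=\nu^{-1}\geq 0$, but the lemma is stated for general $f$, and with an equimeasurable copy one still has $F_k^*=f^*$. Second, your density argument implicitly uses $F\in L^p(0,\abs{\Omega})$, which is not given a priori: obtain it by first extracting a weakly convergent subsequence (the $F_k$ are bounded in $L^p$, $p>1$), identifying its limit with $F$ through the convergence of primitives, and then passing to the full sequence by uniqueness of the weak limit. Third, the $L^1$/BV step is more direct than your three-epsilon scheme suggests: absolute continuity of $E\mapsto\int_E\abs{f}$ applied to the rings $R_j^k$ and to $D(s)\setminus D(s_{j-1})$ already gives $\norma{\Phi_k-\Phi}_\infty\to 0$ even when $f$ is only in $L^1$, and since $\Phi_k(0)=\Phi(0)=0$ and $\Phi_k(\abs{\Omega})=\Phi(\abs{\Omega})=\int_\Omega f$, integration by parts yields $\abs{\int_0^{\abs{\Omega}}(F_k-F)g\,ds}\leq\norma{\Phi_k-\Phi}_\infty\,\abs{Dg}\left((0,\abs{\Omega})\right)\to 0$ for every $g\in BV(0,\abs{\Omega})$.
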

\subsection{Inequalities for the Torsional Rigidity}
Let  us recall Buonocore's result in \cite{B}, that is a generalization for general degenerate elliptic operators in any dimension of \eqref{polyaweinstein}. To state it, we need the following definition.
\begin{definizione}
\label{torsion-buonocore}
    Let $\Omega$ satisfy Definition \ref{multiplyconnected} and let $\{a_{ij}(x)\}_{i,j=1,\dots,n}$ be a symmetric matrix, such that \begin{equation*}
    a_{ij}(x)\xi_i\xi_j\geq\nu(x)\abs{\xi}^2,\quad x\in\Omega,
    \end{equation*}
    with $\nu(x)\geq0,\nu\in L^1(\Omega),\nu^{-1}\in L^p(\Omega)$ for some $p>1$.

We define the quantity
\begin{equation*}
    T(\Omega,a_{ij})= \max\left\{ \frac{\displaystyle \left(\int_G \varphi\, dx\right)^2}{\displaystyle \int_Ga_{ij} \varphi_{x_i}\varphi_{x_j} \,dx} : \, \varphi \in H^1_0(G), \varphi\equiv \text{ constant on }  \Omega_i \, \forall i=1, \dots m  \right\}.
\end{equation*}
We will denote by $T(\Omega)=T(\Omega,\delta_{ij})$.

With this definition, the author proved in \cite[Theorem 2.1]{B} the following theorem.

\end{definizione}
\begin{theorem}\label{buonocore}

Let $\Omega$ satisfy Definition \ref{multiplyconnected} and let $\{a_{ij}(x)\}_{i,j=1,\dots,n}$ be a symmetric matrix, such that \begin{equation*}
    a_{ij}(x)\xi_i\xi_j\geq\nu(x)\abs{\xi}^2,\quad x\in\Omega
    \end{equation*}
    with $\nu(x)\geq0,\nu\in L^1(\Omega),\nu^{-1}\in L^p(\Omega)$ for some $p>1$ and $T(\Omega,a_{ij})$ as in Definition \ref{torsion-buonocore}.
    
If $\Omega^\OO 
=G^\sharp \setminus S^\sharp $, then
$$T(\Omega,a_{ij})\leq T(\Omega^\OO,\nu^\ast(\omega_{n}\abs{x}^n- \abs{S})\delta_{ij}).$$
\end{theorem}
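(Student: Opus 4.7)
The strategy is to extend the classical P\'olya--Weinstein symmetrization argument, handling the degenerate coefficient $\nu$ via a weighted coarea formula and the pseudo-rearrangement technique (Lemma \ref{pseudorearr}) in place of a direct P\'olya--Szegő step. Take a maximizer $\varphi \in H^1_0(G)$, $\varphi\equiv c_i$ on $\Omega_i$, of the Rayleigh quotient for $T(\Omega,a_{ij})$; without loss of generality $\varphi \geq 0$, and set $M=\max\varphi$, $\mu(t)=\abs{\{\varphi>t\}}$. The goal is to construct a radial test function $\psi$ on $G^\sharp$, vanishing on $\partial G^\sharp$ and constant on $S^\sharp$, admissible for $T(\Omega^{\OO},\delta_{ij})$, with Rayleigh quotient at least as large.

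By the ellipticity assumption and the coarea formula,
\[
\int_G a_{ij}\varphi_{x_i}\varphi_{x_j}\,dx \geq \int_G \nu\abs{\nabla\varphi}^2\,dx = \int_0^M dt\int_{\{\varphi=t\}}\nu\abs{\nabla\varphi}\,d\mathcal{H}^{n-1}.
\]
A Cauchy--Schwarz inequality on each level set,
\[
P(\{\varphi>t\})^2 \leq \Bigl(\int_{\{\varphi=t\}}\frac{d\mathcal{H}^{n-1}}{\nu\abs{\nabla\varphi}}\Bigr)\Bigl(\int_{\{\varphi=t\}}\nu\abs{\nabla\varphi}\,d\mathcal{H}^{n-1}\Bigr),
\]
combined with the Euclidean isoperimetric inequality $P(\{\varphi>t\})\geq n\omega_n^{1/n}\mu(t)^{(n-1)/n}$ and \eqref{brothers1}, yields a one-dimensional differential inequality for the decreasing rearrangement $\varphi^*$. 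To pass from the mixed term $\int_{\{\varphi=t\}}(\nu\abs{\nabla\varphi})^{-1}\,d\mathcal{H}^{n-1}$, which couples the non-radial weight $\nu^{-1}$ with $\abs{\nabla\varphi}^{-1}$, to a quantity depending on $\nu$ only through its rearrangement, I would invoke Lemma \ref{pseudorearr} applied to $\nu^{-1}$; the hypothesis $\nu^{-1}\in L^p$ with $p>1$ is precisely what yields the weak convergence needed to identify the limit.

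The multiply connected structure enters via the observation that, since $\varphi\equiv c_i$ on each $\Omega_i$, one has $\mu(t)\geq\abs{S}$ whenever $t<\min_i c_i$, and in general $\mu$ has a jump at each $c_i$ of size $\abs{\Omega_i}$; equivalently, $\varphi^*$ has flat parts of length $\abs{\Omega_i}$ at the levels $c_i$. The radial comparison function $\psi$ on $G^\sharp$ is then constructed from the resulting one-dimensional profile so as to be constant on $S^\sharp$, hence admissible for $T(\Omega^{\OO},\delta_{ij})$. Since the numerator is preserved by rearrangement (Hardy--Littlewood), $\int_G\varphi\,dx=\int_0^{\abs{G}}\varphi^*(s)\,ds$, comparing Rayleigh quotients gives $T(\Omega,a_{ij})\leq T(\Omega^{\OO},\delta_{ij})$.

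The main obstacle is twofold. First, the non-radial, possibly degenerate weight $\nu$ prevents a direct P\'olya--Szegő step, and the technically delicate point is the pseudo-rearrangement argument: verifying that Lemma \ref{pseudorearr} can legitimately be applied to $\nu^{-1}$ inside the Cauchy--Schwarz estimate on each level set (the $L^p$ hypothesis on $\nu^{-1}$ is tailored exactly for this). Second, the multiply connected structure requires that the radial comparison function be constant on $S^\sharp$; this forces one to account carefully for the flat parts of $\varphi^*$ coming from the hole values when translating the one-dimensional inequality into an ODE for $\psi$ on the annular profile $[\abs{S},\abs{G}]$.
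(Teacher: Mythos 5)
Your level-set machinery is indeed the skeleton of the argument this paper recalls from \cite{B} (the paper does not reprove Theorem \ref{buonocore}; its ``proof'' is the chain \eqref{puntuale}--\eqref{saintven} built on the pseudo-rearrangement Lemma \ref{pseudorearr} and the function $\zeta^{-1}$), and your points about coarea, Cauchy--Schwarz on level sets, the isoperimetric inequality, and the flat parts of $\varphi^*$ of length $\abs{\Omega_i}$ at the levels $c_i$ all match that scheme. But there are two concrete gaps. First, you never use the Euler--Lagrange problem \eqref{torsionprob}, and in particular the flux conditions $\int_{\partial\Omega_i}\partial_\nu u\,d\sigma=\abs{\Omega_i}$. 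These are what close the differential inequality: integrating the equation over $\{\tilde u>t\}$ (holes included) gives $\int_{\{\tilde u=t\}}a_{ij}u_{x_i}n_j\,d\mathcal H^{n-1}=\mu_{\tilde u}(t)$, i.e.\ the full measure of the superlevel set with the hole areas, which is the right-hand side that pairs with your Cauchy--Schwarz estimate and with $a_{ij}\xi_i\xi_j\ge\nu\abs{\xi}^2$ to produce an inequality for $\tilde u^*$. For a generic admissible $\varphi$ (even a Rayleigh-quotient maximizer treated only variationally), Cauchy--Schwarz plus the isoperimetric inequality tie $P(\{\varphi>t\})$ to $-\frac{d}{dt}\int_{\{\varphi>t\}}a_{ij}\varphi_{x_i}\varphi_{x_j}$ but give no relation to $\mu(t)$, so no closed one-dimensional inequality follows; the equation is indispensable and your plan leaves this step implicit.

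Second, the concluding mechanism you propose --- build a radial $\psi$ constant on $S^\sharp$ and compare Rayleigh quotients, with ``the numerator preserved by rearrangement'' --- does not work as stated. If $\psi$ is equimeasurable with $\varphi$ (which is what preserves the numerator) then $\psi=\varphi^\sharp$, whose plateaus sit at the radii determined by $\mu(c_i)$, so it is not constant on the centered ball $S^\sharp$ unless all $c_i$ are top values. If instead you slide the flat parts to the center to force admissibility, $\psi$ is no longer equidistributed with $\varphi$, and, worse, its Dirichlet integral is not controlled by $\int_G a_{ij}\varphi_{x_i}\varphi_{x_j}$: the slide pushes the strictly decreasing parts of the profile to larger values of $s$, where the radial weight $s^{2-2/n}$ is larger, so the energy increases, and with the degenerate weight there is no P\'olya--Szeg\H o inequality to absorb this (as you yourself observe). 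The argument in \cite{B}, which the paper records, avoids any competitor for the annulus: one integrates the pointwise bound \eqref{puntuale}, $T(\Omega,a_{ij})=\int_0^{\abs{G}}\tilde u^*(s)\,ds\le\int_0^{\abs{G}}w(s)\,ds$, and then a one-dimensional rearrangement step with the increasing weight $s^{2/n}$ (moving the zero set of $\zeta^{-1}$, i.e.\ the hole contributions, onto $(0,\abs{S})$) identifies the resulting bound with the explicit torsion of the annulus, which is \eqref{saintven}. You should replace your test-function conclusion with this integration step, and state explicitly where the flux conditions enter.
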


\begin{oss}
The Euler-Lagrange equation of the functional $T(\Omega,a_{ij})$, as shown in \cite{B,PW}, see also \cite{BS,K}, is given by \begin{equation}\label{torsionprob}
        \begin{cases}
            -\left(a_{ij}z_{x_i}\right)_{x_j}=1 & \text{ in } \Omega \\
            z = 0 & \text{on } \partial G \\
            z=c_i & \text{on } \partial \Omega_i
        \\ \displaystyle{\int_{\partial \Omega_i} \frac{\partial z}{\partial  \nu } \, d\sigma =\abs{\Omega_i} }& \forall i=1,\dots m.\end{cases}
    \end{equation}  
    
    Hence, if $\tilde u$ is a maximizer of $T(\Omega,a_{ij})$, then its restriction to $\Omega$ is solution to \eqref{torsionprob}.
    
    The constants $(c_1,\dots,c_m)\in\R^m$ for which problem \eqref{torsionprob} admits a unique solution are unknown and are determined a posteriori.

\end{oss}

\begin{oss}
    For the sake of readability, we point out that the unique solution to the problem 
     \begin{equation}
    	\label{probsimm}
        \begin{cases}
            \displaystyle{-\left(\nu^*\left(\omega_n\abs{x}^n-\abs{S}\right)V_{x_i}\right)_{x_i}=1} & \text{ in } \Omega^\OO \\
            \displaystyle{V = 0} & \text{on } \partial G^\sharp \\
            \displaystyle{V=n^{-2}\omega_n^{-\frac{2}{n}}\int_{\abs{S}}^\abs{G}r^{-1+\frac{2}{n}}\left(\nu^*\left(r-\abs{S}\right)\right)^{-1}\,dr} & \text{on } \partial S^\sharp.
        \end{cases}
    \end{equation}
    is given by 
    \begin{equation}\label{defV}
      V(x)=n^{-2}\omega_n^{-\frac{2}{n}}\int_{\omega_n\abs{x}^n}^\abs{G}r^{-1+\frac{2}{n}}\left(\nu^*\left(r-\abs{S}\right)\right)^{-1}\,dr.
    \end{equation}

    Thus, $V$ is precisely the torsion function of the annulus, and throughout the remainder of the paper we will refer to $V$ as the torsion function of the annulus.
\end{oss}

 We define the pseudo-rearrangement of $\nu^{-1}$ on the superlevel sets of $u$ as the function $\hat \nu^{-1}$ for which
\begin{equation}
    \label{hatnu}
    \int_0^s \hat \nu^{-1}\, dx = \int_{D_u(s)} \nu^{-1}\, dx, \forall s \in (0 , \abs{\Omega}).
\end{equation}

We recall the following  approximation result for $\hat\nu^{-1}$.
\begin{lemma}\label{approx}
    There is a sequence $\{\nu_k^{-1}\}$ such that $(\nu_k^{-1})^*=(\nu^{-1})^*$ (or, equivalently, $\nu_k^\ast=\nu^\ast$ ) and $\{\nu^{-1}_k(s)\}$ is weakly convergent in $L^p(0,\abs{\Omega})$ to $\displaystyle \hat\nu^{-1}$, defined in \eqref{hatnu}.
\end{lemma}
Also, it is useful to recall the definition of the function $\zeta^{-1}$ given in \cite{B} which allows to explicitly write a solution to \eqref{torsionprob}. Let the constants $c_h$ be ordered according to their magnitude
\[0=c_0<c_1<\dots< c_m,
\]
we set \[I_h=\left(\mu_u(c_h)+\sum_{k=h+1}^m\abs{\Omega_k},\mu_u(c_{h})+\sum_{k=h}^m\abs{\Omega_k}\right),\quad h=1,\dots,m-1\]
and 
\[ J_h=\left(\mu_u(c_h)+\sum_{k=h}^m\abs{\Omega_k},\mu_u(c_{h-1})+\sum_{k=h}^m\abs{\Omega_k}\right),\quad h=1,\dots,m.\]
Using notation of Lemma \ref{approx} we define
\begin{equation}
\label{zetak}\zeta^{-1}_k(r)=\begin{cases}
    \displaystyle{\nu_k^{-1}(r)} & \text{ if } r\in\left(0,\mu_u(c_m)\right)\\
    \displaystyle{0} & \text{ if } r\in\left(\mu_u(c_m),\mu_u(c_m)+|\Omega_m|\right) \quad \text{or }r\in I_h\\
    \displaystyle{\nu_k^{-1}\left(r-\sum_{k=h}^m\abs{\Omega_k}\right)} & \text{ if } r\in J_h
\end{cases}, \end{equation}
which, again by Lemma \ref{approx}, weakly converges in $L^p$ to
\begin{equation} \label{zeta}
    \zeta^{-1}(r)=\begin{cases}
    \displaystyle{\hat\nu^{-1}(r)} & \text{ if } r\in\left(0,\mu_u(c_m)\right)\\
    \displaystyle{0} & \text{ if } r\in\left(\mu_u(c_m),\mu_u(c_m)+|\Omega_m|\right)\quad \text{or }r\in I_h\\
    \displaystyle{\hat\nu^{-1}\left(r-\sum_{k=h}^m\abs{\Omega_k}\right)} & \text{ if } r\in J_h
\end{cases}.
\end{equation}

Moreover, we use now and in what follows, the tilde notation indicates the extension of a function by constants in the holes.

In order to as clear as possible, we now sketch the proof of Theorem \ref{buonocore} proved in \cite{B}.

\begin{proof}[Proof of Theorem \ref{buonocore}]
    We apply the isoperimetric inequality to the level set $\{\tilde{u}>t\}$ and the Fleming--Rishel formula \eqref{flemingrishel},  getting 
\begin{equation}\label{buonocoreisoperimetrica1}
    \begin{aligned}
      n^2\omega_n^{\frac{2}{n}}\mu_{\tilde u}^{2-\frac{2}{n}}(t) \leq  P^2(\tilde u>t)&=\left(-\frac{d}{dt}\int_t^\infty P(\tilde u>s)\,ds\right)^2=\left(-\frac{d}{dt}\int_{\tilde u>t}\abs{\nabla u}\,dx\right)^2\\&\leq\left(-\frac{d}{dt}\int_{\tilde u>t}\nu^{-1}(x)\,dx\right)\left(-\frac{d}{dt}\int_{\tilde u>t}\nu(x)\abs{\nabla u}^2\, dx\right)\\&\leq\left(-\frac{d}{dt}\int_{\tilde u>t}\nu^{-1}(x)\,dx\right)\left(-\frac{d}{dt}\int_{\tilde u>t}a_{i,j} u_{x_i}u_{x_j}\, dx\right)\\&= -\mu_{\tilde u}'(t) \zeta^{-1}[\mu_{\tilde u}(t)] \mu_{\tilde u}(t).
    \end{aligned}
    \end{equation}
  
    If we divide by $n^2\omega_n^{\frac{2}{n} }\mu_{\tilde u}^{2-\frac{2}{n}}(t)$ and we integrate between $0$ and $\tau$, after a change of variable, we get
    \begin{equation*}
            \tau\leq n^{-2}\omega_n^{-\frac{2}{n}}\int_{\mu_{\tilde u}(\tau)}^{\abs{G}}r^{-1+\frac{2}{n}}\zeta^{-1}(r)\,dr.
    \end{equation*}
    
The last, due to the definition of decreasing rearrangement, can be written as  \begin{equation}\label{puntuale}
      \Tilde{u}^*(s)\leq h(s)=n^{-2}\omega_n^{-\frac{2}{n}}\int_s^\abs{G}r^{-1+\frac{2}{n}}\zeta^{-1}(r)\,dr,\quad s\in(0,\abs{G}).
  \end{equation}
  Lastly, by applying the Hardy-Littlewood inequality, Lemma \ref{approx} and the functions defined in \eqref{zetak}, the thesis can be obtained through the following chain of inequalities
      \begin{equation}\label{saintven}
    \begin{aligned}
        T(\Omega,a_{ij})&\leq\int_0^{\abs{G}}h(s)\, ds \\&=\frac{1}{n^2\omega_n^{\frac{2}{n}}}\int_0^{\abs{G}}\int_s^{\abs{G}}r^{-1+\frac{2}{n}}\zeta^{-1}(r)\,dr\,ds\\&=\frac{1}{n^2\omega_n^{\frac{2}{n}}}\int_0^{\abs{G}}s^\frac{2}{n}\zeta^{-1}(s)\,ds   
        \\&= \lim_k\frac{1}{n^{2}\omega_n^{\frac{2}{n}}}\int_{0}^{\abs{G}}s^{\frac{2}{n}}\zeta_k^{-1}(s)\,ds\\
        &\leq\frac{1}{n^2\omega_n^{\frac{2}{n}}}\int_{\abs{S}}^{\abs{G}}s^\frac{2}{n}(\nu^*(s-\abs{S}))^{-1}\,ds\\&=T(\Omega^\OO,\nu^\ast(\omega_n \abs{x}^n -\abs{S})\delta_{ij}).
    \end{aligned}
    \end{equation}
\end{proof}
 
\begin{corollario}\label{puntualedim2}Let $n=2$, and let $u$ and $V$ be the torsion functions associated with 
$T(\Omega, a_{ij})$ and $T\!\left(\Omega^\OO, \nu^\ast(\omega_n |x|^n - |S|)\delta_{ij}\right)$, respectively.  
Let $\tilde u$ and $\tilde V$ be the extensions to $G$ and $G^\sharp$, respectively. Then
\[
\tilde u^\sharp(x) \leq \tilde V(x), \quad x \in G^\sharp.
\]
\end{corollario}

Eventually, the result \ref{puntualedim2} can't be extended to higher dimensions as shown in the following remark.
\begin{oss}
    Let $n\geq2$, consider $G=B_1$ and $S=B_R\setminus B_r$ with $r\leq R$.
    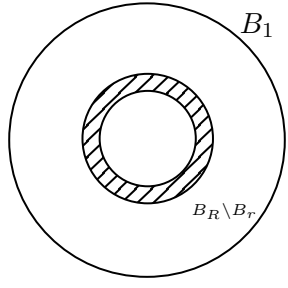
\begin{figure}[!ht]
    \centering
     
    \tikzset{
    pattern size/.store in=\mcSize, 
    pattern size = 5pt,
    pattern thickness/.store in=\mcThickness, 
    pattern thickness = 0.3pt,
    pattern radius/.store in=\mcRadius, 
    pattern radius = 1pt}
    \makeatletter
    \pgfutil@ifundefined{pgf@pattern@name@_8dszkp3t7}{
    \pgfdeclarepatternformonly[\mcThickness,\mcSize]{_8dszkp3t7}
    {\pgfqpoint{0pt}{0pt}}
    {\pgfpoint{\mcSize+\mcThickness}{\mcSize+\mcThickness}}
    {\pgfpoint{\mcSize}{\mcSize}}
    {
    \pgfsetcolor{\tikz@pattern@color}
    \pgfsetlinewidth{\mcThickness}
    \pgfpathmoveto{\pgfqpoint{0pt}{0pt}}
    \pgfpathlineto{\pgfpoint{\mcSize+\mcThickness}{\mcSize+\mcThickness}}
    \pgfusepath{stroke}
    }}
    \makeatother
    \tikzset{every picture/.style={line width=0.75pt}} 
    
    \begin{tikzpicture}[x=0.72pt,y=0.72pt,yscale=-1,xscale=1]
    
    \draw  [pattern=_8dszkp3t7,pattern size=6pt,pattern thickness=0.75pt,pattern radius=0pt, pattern color={rgb, 255:red, 0; green, 0; blue, 0}] (309.47,141.86) .. controls (309.47,123.12) and (324.66,107.93) .. (343.39,107.93) .. controls (362.13,107.93) and (377.32,123.12) .. (377.32,141.86) .. controls (377.32,160.6) and (362.13,175.79) .. (343.39,175.79) .. controls (324.66,175.79) and (309.47,160.6) .. (309.47,141.86) -- cycle ;
    \draw   (271.37,142.65) .. controls (271.37,103.08) and (303.45,71) .. (343.02,71) .. controls (382.59,71) and (414.67,103.08) .. (414.67,142.65) .. controls (414.67,182.22) and (382.59,214.3) .. (343.02,214.3) .. controls (303.45,214.3) and (271.37,182.22) .. (271.37,142.65) -- cycle ;
    \draw  [fill={rgb, 255:red, 255; green, 255; blue, 255 }  ,fill opacity=1 ] (318.39,141.86) .. controls (318.39,128.05) and (329.59,116.86) .. (343.39,116.86) .. controls (357.2,116.86) and (368.39,128.05) .. (368.39,141.86) .. controls (368.39,155.67) and (357.2,166.86) .. (343.39,166.86) .. controls (329.59,166.86) and (318.39,155.67) .. (318.39,141.86) -- cycle ;
    
    \draw (389.76,74.42) node [anchor=north west][inner sep=0.75pt]    {$B_{1}$};
    \draw (364.56,174.28) node [anchor=north west][inner sep=0.75pt]  [font=\tiny]  {$B_{R} \backslash B_{r}$};

    \end{tikzpicture}
     \caption{$\Omega=B_1\setminus S.$}
 \end{figure}   
    \newline Given that the domain is already radially symmetric by \eqref{puntuale} we have an explicit expression of the solution:
\[u(s)=n^{-2}\omega_n^{-\frac{2}{n}}\int_s^{\abs{G}}t^{-1+\frac{2}{n}}\zeta^{-1}(t)\,dt,\qquad s\in(0,\abs{G}).\] As $\zeta^{-1}$ has an explicit expression \eqref{zeta}, we can distinguish three cases:
\begin{itemize}
    \item $s\geq \abs{B_R}$ \[u(s)=n^{-2}\omega_n^{-\frac{2}{n}}\int_s^{\abs{G}}t^{-1+\frac{2}{n}}\,dt=\frac{n^{-1}\omega_n^{-\frac{2}{n}}}{2}\left(\abs{G}^\frac{2}{n}-s^\frac{2}{n}\right);\]
    \item $\abs{B_r}<s<\abs{B_R}$ \[u(s)=\frac{n^{-1}\omega_n^{-\frac{2}{n}}}{2}\left(\abs{G}^\frac{2}{n}-\abs{B_R}^\frac{2}{n}\right);\]
    \item $s< \abs{B_r}$ \[u(s)=\frac{n^{-1}\omega_n^{-\frac{2}{n}}}{2}\left(\abs{G}^\frac{2}{n}-\abs{B_R}^\frac{2}{n}\right)+\int_s^\abs{B_r}t^{-1+\frac{2}{n}}\,dt=\frac{n^{-1}\omega_n^{-\frac{2}{n}}}{2}\left(\abs{G}^\frac{2}{n}-\abs{B_R}^\frac{2}{n}+\abs{B_r}^\frac{2}{n}-s^\frac{2}{n}\right).\]
\end{itemize}
On the other hand, the solution to the symmetrized problem is:
\begin{equation*}
    V(s)=\begin{cases}
        \displaystyle{\frac{n^{-1}\omega_n^{-\frac{2}{n}}}{2}(\abs{G}^\frac{2}{n}-s^\frac{2}{n})} & \text{if } s\geq\abs{S}\\
        \displaystyle{\frac{n^{-1}\omega_n^{-\frac{2}{n}}}{2}(\abs{G}^\frac{2}{n}-\abs{S}^\frac{2}{n})} & \text{if } s<\abs{S}.
    \end{cases}
\end{equation*}
Therefore, by noticing that $\abs{B_R}=\abs{S}+\abs{B_r}$ we get
\begin{equation*}
    V(0)-u(0)=\frac{n^{-1}\omega_n^{-\frac{2}{n}}}{2}\left(\abs{B_R}^\frac{2}{n}-\abs{B_r}^\frac{2}{n}-\abs{S}^\frac{2}{n}\right)=\frac{n^{-1}\omega_n^{-\frac{2}{n}}}{2}\left(\left(\abs{B_r}+\abs{S}\right)^\frac{2}{n}-\abs{B_r}^\frac{2}{n}-\abs{S}^\frac{2}{n}\right),
\end{equation*}
which is not positive by concavity of $t \to t^{2/n}$ whenever $n\geq3$. Therefore, in dimension $n>2$, a point-wise comparison does not hold and, moreover, can't hold a mass concentration comparison of the type
\begin{equation}\label{concentrazioni}
    \int_0^r    \tilde u^\ast(s)\,ds\leq\int_0^r\tilde V^\ast(s)\,ds\quad\forall\, r\in[0,\abs{G}],
\end{equation}
because \eqref{concentrazioni} would implicate $L^p$ comparison for every $p$ (see \cite{ALT}), including $p=\infty$.
\end{oss}
\subsection{Quantitative inequalities}
\label{subsection2.3}
The technique to prove our main theorem is to apply some quantitative inequalities and subsequently estimate how the distance from the optimum propagates. Denoting by $\alpha(\Omega)$ the Fraenkel's asymmetry index \begin{equation*}
	\alpha(\Omega):=\min_{x \in \R^{n}}\bigg \{  \dfrac{|\Omega\triangle B_r(x)|}{|B_r(x)|} \;,\; |B_r(x)|=|\Omega|\bigg \} \text{ where } |\Omega\triangle B_r(x)|= |(\Omega\setminus B_r(x)) \cup ( B_r(x)\setminus\Omega)|,
\end{equation*}
 we have the following quantitative isoperimetric inequality, proved in \cite{FMP2} (see also \cite{CL,Fuglede,FMP,H}).

\begin{theorem}
    There exists a constant $\gamma_n$ such that,  for any measurable set $\Omega$ of finite measure
    \begin{equation}\label{quant_isop}
        P(\Omega)\geq n\omega_n^{\frac{1}{n}}\abs{\Omega}^{\frac{n-1}{n}}\left(1+\dfrac{\alpha^2(\Omega)}{\gamma_n}\right),
    \end{equation}
where
$$\gamma_n= \frac{181\, n^7}{(2-2^{(n-1)/n})^{\frac{3}{2}}}.$$
\end{theorem}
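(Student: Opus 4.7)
The plan is to follow the symmetrization strategy of Fusco, Maggi, and Pratelli: reduce the general case to that of nearly spherical sets by iterated Steiner symmetrization and then invoke Fuglede's quantitative isoperimetric inequality. By rescaling, it suffices to prove the claim for $\abs{\Omega}=\omega_n$, in which case the inequality becomes $P(\Omega) \geq P(B_1)(1 + \alpha^2(\Omega)/\gamma_n)$, so that only sets with small isoperimetric deficit need to be considered.

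\medskip

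\noindent\textbf{Step 1: Reduction to $n$-symmetric sets.} Perform $n$ successive Steiner symmetrizations of $\Omega$ with respect to $n$ mutually orthogonal hyperplanes, chosen optimally, producing a set $\Omega^*$ that is symmetric about each of them. Each Steiner symmetrization preserves measure and does not increase perimeter. The crucial quantitative input is a \emph{propagation of asymmetry} lemma: if $\Omega_1 = \Omega$ and $\Omega_{k+1}$ is the Steiner symmetrization of $\Omega_k$, then
\[
\alpha(\Omega_k) \leq 2\,\alpha(\Omega_{k+1}) + C_n \sqrt{\,P(\Omega_k) - P(\Omega_{k+1})\,}.
\]
Iterating this estimate $n$ times and balancing $2^n$ against $2^{(n-1)/n}$ produces the factor $(2 - 2^{(n-1)/n})^{3/2}$ appearing in $\gamma_n$.

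\medskip

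\noindent\textbf{Step 2: From $n$-symmetric to nearly spherical.} Starting from an $n$-symmetric set $\Omega^*$ with small deficit and small asymmetry, one shows that up to a translation $\Omega^*$ is trapped between two concentric balls $B_{1-\varepsilon}$ and $B_{1+\varepsilon}$, with $\varepsilon$ controlled by the deficit. This allows the boundary to be written as a small radial graph over the sphere, $\partial \Omega^* = \{(1+u(\sigma))\sigma : \sigma \in S^{n-1}\}$, with $\norma{u}_\infty$ small and $\int_{S^{n-1}} u\, d\sigma$ controlled by the volume constraint.

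\medskip

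\noindent\textbf{Step 3: Fuglede's inequality and conclusion.} For such nearly spherical sets with barycenter at the origin, Fuglede's theorem gives
\[
P(\Omega_u) - P(B_1) \geq c_n \norma{u}_{H^{1/2}(S^{n-1})}^2 \geq c'_n\, \alpha^2(\Omega_u),
\]
where the second bound uses $\alpha(\Omega_u) \leq C_n \norma{u}_{L^2(S^{n-1})}$. Combining this with Step 1 and the monotonicity $P(\Omega) \geq P(\Omega^*)$, and rearranging the resulting inequality as a quadratic in $\sqrt{P(\Omega)-P(B_1)}$, one deduces $P(\Omega) - P(B_1) \geq \alpha^2(\Omega)\, P(B_1)/\gamma_n$ with the explicit constant.

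\medskip

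The main obstacle is the propagation of asymmetry lemma in Step 1: tracking how $\alpha$ can decrease while the perimeter barely decreases requires a careful slice-by-slice comparison between $\Omega$ and its Steiner symmetrization, and this is precisely the step that determines the polynomial factor $181\, n^7$ in $\gamma_n$. A secondary technical difficulty lies in Step 2, where one must quantitatively ensure that an $n$-symmetric set of small deficit is actually close to a ball in $L^\infty$ (not merely in $L^1$) so that Fuglede's graph representation applies; the standard way to achieve this is via the $n$-fold symmetry, which forces the center of the optimal ball in the definition of $\alpha(\Omega^*)$ to lie at the common symmetry center.
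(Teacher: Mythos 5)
This statement is not proved in the paper at all: it is quoted verbatim from Fusco--Maggi--Pratelli \cite{FMP2}, which is where the explicit constant $\gamma_n=181\,n^7/(2-2^{(n-1)/n})^{3/2}$ comes from. So the relevant question is whether your sketch is a viable reconstruction of that proof, and it is not: you have spliced together two genuinely different proofs from the literature, and the splice point is exactly where the real difficulty lives. The FMP argument is symmetrization all the way down: after reducing to $n$-symmetric sets they reduce further to axially symmetric sets and finish with a direct one-dimensional analysis of the section function; they never pass through Fuglede. The route via Fuglede's inequality for nearly spherical sets is the Cicalese--Leonardi selection principle, which requires regularity theory for $\Lambda$-minimizers of the perimeter and, being a compactness/contradiction argument, produces a non-explicit constant --- so it cannot yield the specific $\gamma_n$ in the statement.

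The concrete gap is your Step 2. It is false that an $n$-symmetric set with small isoperimetric deficit is trapped between $B_{1-\varepsilon}$ and $B_{1+\varepsilon}$ with $\varepsilon$ controlled by the deficit: take $B_1$ together with $2^n$ symmetrically placed tentacles of length $1$ and arbitrarily small measure and perimeter (or remove $2^n$ tiny symmetric balls). The deficit is as small as you like, yet the set is not $L^\infty$-close to any ball and its boundary is not a radial graph over $S^{n-1}$, so Fuglede's theorem simply does not apply. Overcoming this is the central obstruction of the whole subject; "$n$-fold symmetry forces the center" only pins down the center of the competitor ball, it does not tame the boundary. A secondary issue is Step 1: the propagation inequality you state for Steiner symmetrization, with error $\sqrt{P(\Omega_k)-P(\Omega_{k+1})}$, is not the FMP lemma and is essentially as hard as the theorem itself; FMP instead bisect the volume by a hyperplane, reflect the better half, and prove a propagation estimate with error $\sqrt{D(E)}$ --- the factor $2-2^{(n-1)/n}$ arises there from applying the isoperimetric inequality to the half-volume pieces, not from "balancing $2^n$ against $2^{(n-1)/n}$." As the paper only cites this result, the safe course is to do the same rather than to reprove it.
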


It is not trivial to understand how the asymmetry propagates from the whole domain to the superlevel sets of the solution. Nevertheless, we can use the following result (see, for instance,~\cite[Lemma~2.8]{BD}), which provides an estimate for sets that are close in measure.

\begin{lemma}\label{propasi}
    Let $\Omega\subset\R^n$ be an open set with finite measure and $U\subset\Omega$ a subset of positive measure such that \[\frac{|\Omega\backslash U|}{|\Omega|}\leq\frac{1}{4}\alpha(\Omega).\]
    Then, there holds \[\alpha(U)\geq\frac{1}{2}\alpha(\Omega).\]
\end{lemma}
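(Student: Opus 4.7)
The plan is to prove the lemma by a short symmetric-difference triangle inequality argument, comparing the optimal ball for $U$ with a suitably enlarged concentric ball that competes in the definition of $\alpha(\Omega)$.

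First, I would fix a ball $B_1$ with $\abs{B_1}=\abs{U}$ that achieves the infimum in $\alpha(U)$, so that $\abs{U\triangle B_1}=\alpha(U)\abs{U}$. Since $\abs{\Omega}\geq \abs{U}$, I can then enlarge $B_1$ to the concentric ball $B_2$ with $\abs{B_2}=\abs{\Omega}$; by construction $B_1\subset B_2$ and $\abs{B_1\triangle B_2}=\abs{\Omega}-\abs{U}=\abs{\Omega\setminus U}$. The set $B_2$ is therefore a legitimate competitor in the definition of $\alpha(\Omega)$.

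Next, I would apply the triangle inequality for the symmetric difference twice, together with the fact that $U\subset \Omega$ gives $\abs{\Omega\triangle U}=\abs{\Omega\setminus U}$, to obtain
\[
\alpha(\Omega)\abs{\Omega}\leq \abs{\Omega\triangle B_2}\leq \abs{\Omega\triangle U}+\abs{U\triangle B_1}+\abs{B_1\triangle B_2}=2\abs{\Omega\setminus U}+\alpha(U)\abs{U}.
\]
Dividing through by $\abs{\Omega}$ and using $\abs{U}\leq\abs{\Omega}$ yields
\[
\alpha(\Omega)\leq 2\,\frac{\abs{\Omega\setminus U}}{\abs{\Omega}}+\alpha(U).
\]
Invoking the hypothesis $\abs{\Omega\setminus U}/\abs{\Omega}\leq \tfrac{1}{4}\alpha(\Omega)$ gives $\alpha(\Omega)\leq \tfrac{1}{2}\alpha(\Omega)+\alpha(U)$, hence $\alpha(U)\geq\tfrac{1}{2}\alpha(\Omega)$, as claimed.

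There is no real obstacle here: the only subtle point is that the optimal ball for $U$ has the wrong volume to be tested against $\Omega$, which is handled by passing to the concentric ball $B_2$ of the correct measure, at the price of the additive error $\abs{\Omega\setminus U}$ which is precisely what the hypothesis controls.
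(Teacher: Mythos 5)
Your argument is correct: the triangle inequality for the symmetric difference, applied with the optimal ball for $U$ and the concentric ball rescaled to measure $\abs{\Omega}$, together with $\abs{\Omega\triangle U}=\abs{\Omega\setminus U}$ and $\abs{U}\leq\abs{\Omega}$, gives exactly $\alpha(\Omega)\leq 2\abs{\Omega\setminus U}/\abs{\Omega}+\alpha(U)$ and hence the claim. The paper does not reprove this lemma (it quotes it from \cite{BD}), but your proof is the standard one and coincides in method with the paper's own proof of the companion outward-propagation Lemma \ref{propasiback}, which uses the same symmetric-difference triangle inequality with a concentric ball of adjusted volume.
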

Another key ingredient is the quantitative version of P\'olya-Szeg\H o
inequality \eqref{poliaszego}, proved in \cite{CFET}.
\begin{theorem}[Quantitative P\'olya-Szeg\H o
inequality]
\label{polya_quant}
    Let $u\in W^{1,2}(\R^n)$, $n\geq 2$. Then, there exist  positive constants $r$, $s$ and $C$, depending only on $n$,  such that, for every $u\in W^{1,2}(\R^n)$, it holds
\begin{equation*}
   \inf_{x_0 \in \R^n} \dfrac{\displaystyle{\int_{\R^n} \abs{u(x)\pm u^\sharp(x+x_0)} \, dx }}{\abs{\{\abs{u}>0\}}^{\frac{1}{n}+\frac{1}{2}}\norma{\nabla u^\sharp}_2}\leq C(n) \left[M_{u^\sharp}(E(u)^r)+E(u)\right]^s,
\end{equation*}
where 
\begin{equation}\label{eumu}
E(u)= \frac{\displaystyle{\int_{\R^n} \abs{\nabla u}^2}}{\displaystyle{\int_{\R^n} |\nabla u^{\sharp}|^2}}-1 \qquad \text{ and } \qquad M_{u^\sharp}(\delta)=\dfrac{\abs{\left\{|\nabla u^{\sharp}|<\delta\right\}\cap \left\{0<u^\sharp<||u||_\infty\right\}}}{\abs{\{\abs{u}>0\}}}.
\end{equation}
\end{theorem}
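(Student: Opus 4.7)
The plan is to go through superlevel sets. Set $\mu(t)=|\{u>t\}|$, let $V(t)$ denote the ball centered at the origin with $|V(t)|=\mu(t)$, and recall that, by the coarea formula and the isoperimetric inequality,
\begin{equation*}
\int_{\mathbb{R}^n}|\nabla u|^2\,dx-\int_{\mathbb{R}^n}|\nabla u^{\sharp}|^2\,dx=\int_0^{\|u\|_\infty}\!\!\Big[\int_{\{u=t\}}|\nabla u|\,d\mathcal{H}^{n-1}\,-\,n\omega_n^{1/n}\mu(t)^{(n-1)/n}\cdot\big(-\mu'(t)\big)\Big]\,dt
\end{equation*}
up to a Cauchy--Schwarz step that turns the first coarea slice into $P(\{u>t\})^2/(-\mu'(t))$. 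Thus $E(u)$ controls, in an integrated sense over levels $t$, the isoperimetric deficit $\delta(\{u>t\}):=P(\{u>t\})/(n\omega_n^{1/n}\mu(t)^{(n-1)/n})-1$ weighted by $n\omega_n^{1/n}\mu(t)^{(n-1)/n}/(-\mu'(t))$, which is exactly $|\nabla u^{\sharp}|$ evaluated on $\partial V(t)$. This is the bridge I would use to convert the Dirichlet deficit into information on level sets.

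Next, I would invoke the quantitative isoperimetric inequality \eqref{quant_isop} slice by slice: at those levels $t$ where $|\nabla u^{\sharp}|_{\partial V(t)}\ge \delta_0$ for a threshold $\delta_0$ to be chosen, the above representation forces $\alpha(\{u>t\})^2 \lesssim \gamma_n\, \delta(\{u>t\})$, and by integration in $t$ one gets that $\alpha(\{u>t\})\lesssim (E(u)/\delta_0)^{1/2}$ for ``most'' levels. The levels where $|\nabla u^{\sharp}|<\delta_0$ cover a region whose measure is at most $|\{|\nabla u^{\sharp}|<\delta_0\}\cap\{0<u^\sharp<\|u\|_\infty\}|$, which is precisely the numerator of $M_{u^{\sharp}}(\delta_0)$. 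By choosing $\delta_0=E(u)^r$, the ``bad'' slices contribute at most $M_{u^{\sharp}}(E(u)^r)$ in measure while the ``good'' slices yield small Fraenkel asymmetry, which matches the shape of the right-hand side.

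The main obstacle I expect is the alignment of the optimal balls across levels: the quantitative isoperimetric inequality gives, for each good $t$, a translation vector $x(t)$ such that $\{u>t\}$ is close in measure to $V(t)+x(t)$, but in order to recover the $\inf_{x_0}$ in the statement one must show that $t\mapsto x(t)$ varies little, so that a single translation works simultaneously for almost every slice. The standard way to handle this, which I would follow, is to pick a reference ``good'' level $t_0$, compare $x(t)$ with $x(t_0)$ by using that two nearby superlevel sets must both be close to an annular region around $\partial V(t)$, and show that the distance $|x(t)-x(t_0)|$ is controlled by a power of the Fraenkel asymmetries plus $M_{u^{\sharp}}(E(u)^r)$; this yields a H\"older-type loss accounting for the exponent $s$.

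Finally, once a common translation $x_0$ is identified, I would write
\begin{equation*}
\int_{\mathbb{R}^n}|u(x)-u^{\sharp}(x+x_0)|\,dx=\int_0^{\|u\|_\infty}|\{u>t\}\triangle(V(t)-x_0)|\,dt
\end{equation*}
by the layer-cake formula, split the $t$-integral into good and bad levels, bound the good portion by $\int \alpha(\{u>t\})\,\mu(t)\,dt$ and the bad portion by $\|u\|_\infty\cdot |\{|\nabla u^\sharp|<E(u)^r\}\cap\{0<u^\sharp<\|u\|_\infty\}|$. Using $\|u\|_\infty\lesssim |\{u>0\}|^{1/n}\|\nabla u^{\sharp}\|_2$ (a consequence of the rearrangement representation of $u^{\sharp}$) and the bound $\mu(t)\le|\{u>0\}|$, both pieces are dominated by $|\{u>0\}|^{1/n+1/2}\|\nabla u^{\sharp}\|_2\,[M_{u^\sharp}(E(u)^r)+E(u)]^s$ after collecting exponents. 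The $\pm$ sign in the numerator reflects the usual two-sign ambiguity in handling $u$ vs. $-u$, which is immaterial because both sides are invariant.
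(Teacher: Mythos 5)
First, a point of context: the paper does not prove Theorem \ref{polya_quant} at all --- it is imported verbatim from \cite{CFET} --- so there is no internal proof to compare your argument against. Your outline does reproduce the broad strategy of the original Cianchi--Esposito--Fusco--Trombetti proof: coarea plus Cauchy--Schwarz to express the P\'olya--Szeg\H{o} deficit as an integral over levels of the isoperimetric deficit of $\{u>t\}$ weighted by $\abs{\nabla u^\sharp}$ on $\partial V(t)$; the quantitative isoperimetric inequality on the ``good'' levels; the set $\{\abs{\nabla u^\sharp}<\delta\}$ accounting for the levels where that weight degenerates; and a final layer-cake integration. In that sense the route is the right one, but as a proof it has genuine gaps.

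Concretely: (i) the displayed identity is wrong as written --- the symmetrized slice contributes $P(V(t))^2/(-\mu'(t))$, not $n\omega_n^{1/n}\mu(t)^{(n-1)/n}\cdot(-\mu'(t))$; you flag the Cauchy--Schwarz correction for the other term, but the formula itself needs fixing. (ii) The center-alignment step, which is the actual content of the theorem, is only asserted. What makes it work is the nesting $\{u>t\}\subset\{u>s\}$ for $t>s$: two nested sets that are both measure-close to balls of \emph{comparable} radii have close centers, and the radii remain comparable across a stretch of bad levels precisely because the spatial measure swept by those levels is at most $M_{u^\sharp}(\delta)\abs{\{u>0\}}$. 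Turning this into a quantitative drift bound on $\abs{x(t)-x(t_0)}$, uniform as $\mu(t)$ becomes small and robust to plateaus of $u$ (where $\mu$ jumps and $-\mu'$ has atoms), is where essentially all of the work in \cite{CFET} lies; ``both close to an annular region around $\partial V(t)$'' does not substitute for it. (iii) The bound $\norma{u}_\infty\lesssim \abs{\{u>0\}}^{1/n}\norma{\nabla u^\sharp}_2$ you invoke for the bad levels is false: a compactly supported $W^{1,2}$ function need not be bounded for any $n\geq 2$ (e.g.\ $\log\log(1/\abs{x})$ near the origin when $n=2$), and the exponent is dimensionally inconsistent. The bad-level contribution must instead be estimated through the relation $-\mu'(t)=P(V(t))/\abs{\nabla u^\sharp}\geq n\omega_n^{1/n}\mu(t)^{(n-1)/n}/\delta$ valid on those levels, which converts the spatial measure $M_{u^\sharp}(\delta)\abs{\{u>0\}}$ into a bound on the integral of $\mu(t)$ over the bad levels with the correct powers of $\abs{\{u>0\}}$ and, after choosing $\delta=E(u)^r$, of $\norma{\nabla u^\sharp}_2$.
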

Lastly we recall, also, the sharp quantitative inequality for the torsional rigidity (see \cite{BDV}) in the case of homogeneous Dirichlet boundary condition
\begin{theorem}
    There exists a constant $C$ depending only on the dimension $n$, such that for every open set $\Omega\subset\R^n$ with finite measure we have
    \[\abs{B}^{-\frac{n+2}{n}}T(B)-\abs{\Omega}^{-\frac{n+2}{n}}T(\Omega)\geq C\alpha^2(\Omega).\]
\end{theorem}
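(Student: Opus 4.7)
My plan is to follow a selection-principle strategy in the spirit of Cicalese--Leonardi, adapted to the torsion functional. By the natural scaling $T(t\Omega)=t^{n+2}T(\Omega)$, I first normalize $\abs{\Omega}=\abs{B}=\omega_n$, so that the claim reduces to $T(B)-T(\Omega)\ge C\,\mathcal{A}(\Omega)^2$. A preliminary dichotomy handles the ``far from the ball'' regime: if $\mathcal{A}(\Omega)\ge \eta$ for a fixed $\eta>0$, a compactness/concentration argument together with the strict Saint-Venant inequality $T(\Omega)<T(B)$ for any non-ball of equal measure gives $T(B)-T(\Omega)\ge c(\eta,n)>0$. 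It therefore suffices to prove the quantitative bound in the regime $\mathcal{A}(\Omega)\le \eta_0$ for some small universal $\eta_0$.

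\textbf{Selection principle.} Arguing by contradiction, suppose there is a sequence $\Omega_k$ with $\mathcal{A}(\Omega_k)\to 0$ and $T(B)-T(\Omega_k)=o(\mathcal{A}(\Omega_k)^2)$. The sequence $\Omega_k$ need not be regular, so I replace it by minimizers $\widetilde\Omega_k$ of a penalized shape functional of the form
\[
\mathcal{F}_k(\Omega)=\big[T(B)-T(\Omega)\big]+\Lambda\,\big|\abs{\Omega}-\omega_n\big|-\varepsilon_k\,\mathcal{A}(\Omega)^2,
\]
subject to $\mathcal{A}(\Omega)=\mathcal{A}(\Omega_k)$, with $\varepsilon_k$ chosen so that the $\widetilde\Omega_k$ have smaller deficit-to-asymmetry-squared ratio than the original sequence. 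Existence of $\widetilde\Omega_k$ follows from a Buttazzo--Dal Maso type compactness argument. The crucial regularity step shows that each $\widetilde\Omega_k$ is a quasi-minimizer of an Alt--Caffarelli-type free boundary functional associated with the torsion PDE \eqref{torsionprob}, hence $\partial \widetilde\Omega_k$ is $C^{1,\alpha}$ (with $C^{2,\alpha}$ bootstrap). Since $\mathcal{A}(\widetilde\Omega_k)\to 0$, a compactness argument in $C^{1,\alpha}$ then yields, after translation, $\widetilde\Omega_k\to B$ and the $\widetilde\Omega_k$ become \emph{nearly spherical} domains, i.e.\ $\widetilde\Omega_k=\{(1+u_k(\theta))\theta:\theta\in\mathbb{S}^{n-1}\}$ with $\norma{u_k}_{C^{1,\alpha}}\to 0$.

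\textbf{Fuglede computation.} On nearly spherical domains one can expand the torsion functional to second order in $u$ using the Rayleigh formulation \eqref{torsione} and the torsion PDE. Writing the Fourier expansion $u=\sum_{j\ge 0}a_j Y_j$ on $\mathbb{S}^{n-1}$, the volume constraint kills the $j=0$ mode and a translation kills the $j=1$ modes; a direct computation yields
\[
T(B)-T(\widetilde\Omega_k)\ \ge\ c(n)\sum_{j\ge 2}\lambda_j\,\abs{a_j^{(k)}}^2\ =\ c(n)\,\norma{u_k}_{H^{1/2}(\mathbb{S}^{n-1})}^2,
\]
while an elementary estimate gives $\mathcal{A}(\widetilde\Omega_k)^2\le C\,\norma{u_k}_{L^1(\mathbb{S}^{n-1})}^2\le C'\norma{u_k}_{H^{1/2}(\mathbb{S}^{n-1})}^2$. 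Combining these two bounds contradicts the assumption $T(B)-T(\widetilde\Omega_k)=o(\mathcal{A}(\widetilde\Omega_k)^2)$, closing the argument.

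\textbf{Main obstacle.} The serious difficulty is the selection/regularity step: torsional rigidity is defined through a PDE, so establishing $C^{1,\alpha}$ regularity of penalized minimizers (and the convergence $\widetilde\Omega_k\to B$ in $C^{1,\alpha}$) requires identifying the minimizers as quasi-minimizers of an Alt--Caffarelli-type free boundary functional and invoking non-degeneracy and density estimates for the torsion function near $\partial\widetilde\Omega_k$. Without this regularity, one cannot pass to the nearly spherical regime where the sharp second-order Fuglede expansion is available.
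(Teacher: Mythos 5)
The paper does not prove this theorem at all: it is recalled verbatim from the reference \cite{BDV} (Brasco--De Philippis--Velichkov) as a known sharp quantitative inequality, and the paper only ever uses it as a black box. There is therefore no internal proof to compare your proposal against. Your sketch does correctly identify the selection-principle strategy that \cite{BDV} uses: scaling normalization, dichotomy between large and small asymmetry, replacement of a hypothetical bad sequence by penalized minimizers, Alt--Caffarelli free-boundary regularity to upgrade them to nearly spherical sets, and then a Fuglede-type second-order expansion in spherical harmonics. At that level your roadmap is sound.

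Two concrete issues remain. First, the penalized functional as written is incoherent: once you impose the hard constraint $\mathcal{A}(\Omega)=\mathcal{A}(\Omega_k)$, the term $-\varepsilon_k\,\mathcal{A}(\Omega)^2$ is a constant on the admissible class and plays no role. In the actual argument the asymmetry is not constrained; one instead penalizes the volume and (in the Cicalese--Leonardi style) a soft term tying the asymmetry to a small prescribed value, and the contradiction hypothesis is what keeps the minimizers' asymmetry small. Second, the step you correctly flag as the main obstacle --- nondegeneracy, density estimates, $C^{1,\alpha}$ bounds and compactness to $B$ --- is left entirely unfilled, and without it the Fuglede expansion is unavailable. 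So what you have is an accurate outline of the cited proof's architecture, with one mis-stated ingredient and the central analytic step missing, for a theorem the paper itself does not attempt to prove.
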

\section{ Rigidity of the problem}\label{section3}
As already mentioned in the introduction, we prove Proposition \ref{propmerc} in a more general setting. Let $u$ and $V$ be solutions to \eqref{torsionprob} and \eqref{probsimm} respectively. Then, we have

\begin{theorem}\label{teorig}
Let $\Omega$ satisfy Definition \ref{multiplyconnected}. Let $n\geq 2$,  $\nu\in L^1(\Omega),\nu^{-1}\in L^p(\Omega)$ for some $p>1$, and u and v  be  solutions to \eqref{torsionprob} and \eqref{probsimm} respectively. Then, if 
 $$T(\Omega,a_{ij})=T(\Omega^\OO,\nu^*(\omega_n\abs{x}^n-\abs{S})\delta_{ij}),$$
 necessarily $\Omega= \Omega^\OO+ x_0$, $u=u^\sharp(\cdot +x_0)$, $\nu=\nu^\sharp(\cdot +x_0)$  and $a_{ij}(x+x_0)x_j=\nu(x)x_i$ for some $x_0 \in \R^n$.
\end{theorem}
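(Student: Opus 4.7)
The plan is to trace back through the chain of inequalities in \eqref{saintven} that yields the P\'olya-Weinstein bound, impose equality at every step, and decode the resulting geometric and coefficient-level consequences. First, I would extend the maximizer $u$ of $T(\Omega,a_{ij})$ to a function $\tilde u\in H^1_0(G)$ by setting $\tilde u\equiv c_i$ on each $\Omega_i$; the hypothesis $T(\Omega,a_{ij})=T(\Omega^{\OO},\delta_{ij})$ combined with \eqref{puntuale} and \eqref{saintven} forces equality in each of the underlying estimates, in particular $\tilde u^*(s)=w(s)$ for a.e.\ $s\in(0,|G|)$.

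From equality in the isoperimetric/P\'olya-Szeg\H o step, almost every superlevel set $\{\tilde u>t\}$ must be a ball; nesting in $t$ forces these balls to be concentric about a common center $x_0\in\R^n$, so $G=G^\sharp+x_0$ and $\tilde u(x)=\tilde u^\sharp(x-x_0)$. From equality in the second inequality of \eqref{saintven} --- the one that shifts all hole-measure contributions of $\zeta^{-1}$ to a single centered hole --- I would deduce that the zero intervals of $\zeta^{-1}$ in \eqref{zeta} all cluster to the left of $(0,|G|)$. Since $s^{2/n}$ is strictly increasing, this forces $\mu(c_h)=0$ for every $h$, i.e.\ every $c_i$ coincides with the common value $c=\|\tilde u\|_\infty$, and each $\Omega_i$ is contained in the set where $\tilde u$ attains its maximum. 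By radiality this set is a closed ball $\overline{B}_{\bar r}(x_0)$ of measure $|S|$, so $\overline{\bigcup_i\Omega_i}=\overline{B}_{\bar r}(x_0)$. Disjointness of the closures $\overline{\Omega}_i$ (item (iii) of Definition \ref{multiplyconnected}) then forces $m=1$ and $\Omega_1=S^\sharp+x_0$, hence $\Omega=\Omega^{\OO}+x_0$.

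To recover the conditions on $\nu$ and $a_{ij}$ I would examine the remaining equalities. Equality in the Cauchy-Schwarz-type estimate bounding $a_{ij}u_{x_i}u_{x_j}$ from below by $\nu|\nabla u|^2$ enforces $a_{ij}(x)u_{x_j}(x)=\nu(x)u_{x_i}(x)$; since $\nabla u(x)$ is parallel to $x-x_0$ by the radiality of $u$, this rewrites as $a_{ij}(x+x_0)x_j=\nu(x+x_0)x_i$ after the translation. Equality in the Hardy-Littlewood step applied to $\nu^{-1}$, traced through Lemma \ref{approx} and the construction of $\zeta^{-1}$, forces the coincidence $\nu=\nu^\sharp(\cdot-x_0)$.

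The main obstacle will be the passage from the rearrangement identity $\tilde u^*=w$ to genuine equality in the isoperimetric inequality for a.e.\ superlevel set: the degeneracy permitted in $\nu$ (merely $\nu\in L^1$ with $\nu^{-1}\in L^p$) prevents a direct appeal to Fuglede-type rigidity. I would instead argue at the level of the nondegenerate approximants $v_k$ of Lemma \ref{approx}, apply classical rigidity there, and then pass to the limit using uniqueness of the solution to the symmetrized problem \eqref{probsimm} together with the weak $L^p$ convergence $v_k^{-1}\rightharpoonup v^{-1}$.
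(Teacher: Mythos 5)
Your overall strategy — extend the maximizer, impose equality along the chain that produces \eqref{saintven} and \eqref{puntuale}, and decode the geometric and coefficient-level consequences — matches the paper's. But there is a genuine gap at the step where you pass from equality in the isoperimetric estimate for a.e.\ superlevel set to the conclusion that $\tilde u$ is a translate of $\tilde u^\sharp$. Knowing that a.e.\ $\{\tilde u>t\}$ is a ball does \emph{not} imply the balls share a common center: nested balls need not be concentric, so "nesting in $t$ forces these balls to be concentric" is simply false as stated. The paper avoids this by invoking the adapted weighted Brothers--Ziemer rigidity theorem of \cite{BM} (Theorem \ref{brotherziemer}): after deducing $\zeta^{-1}=\overline{\nu}$, hence $\tilde u^\sharp=\tilde V$, and then extracting the equality $\int_{\Omega^\OO}\nu^*\bigl(\omega_n\abs{x}^n-\abs{S}\bigr)\abs{\nabla u^\sharp}^2=\int_\Omega\nu\abs{\nabla u}^2$, it applies Theorem \ref{brotherziemer} with $\psi=\nu$ to obtain $u=\pm u^\sharp(\cdot+x_0)$ and $\Omega=\Omega^\OO+x_0$ in one stroke. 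The hypothesis $\abs{\{\nabla u^\sharp=0\}\cap u^{\sharp-1}(0,\norma{u}_\infty)}=0$ needed by that theorem is verified precisely because $\tilde u^\sharp=\tilde V$, whose gradient vanishes only on the flat top $S^\sharp$; this is the mechanism that makes rigidity go through despite the degeneracy of $\nu$, and it replaces your proposed (and rather vague) approximation argument via the $v_k$ of Lemma \ref{approx}. Rigidity is not a quantity that passes through weak limits, so that workaround would not obviously close the gap.

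Two smaller points. Your deduction that all $c_i$ equal $\norma{\tilde u}_\infty$ and that disjointness of the $\overline{\Omega}_i$ forces $m=1$ is a nice elementary observation, and the paper essentially subsumes it inside the Brothers--Ziemer conclusion $\Omega=\Omega^\OO$; it is worth noting but does not substitute for the missing rigidity lemma. Your treatment of the coefficient conditions is correct: equality in the pointwise ellipticity bound for a symmetric PSD matrix $a-\nu I$ on $\xi=\nabla u$ does yield $a_{ij}u_{x_j}=\nu u_{x_i}$, and radiality of $u$ turns this into the stated condition on $a_{ij}$, while tracing the Hardy--Littlewood equality back through $\zeta^{-1}=\overline{\nu}$ gives $\nu=\nu^\sharp(\cdot+x_0)$.
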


It is clear that Proposition \ref{propmerc} is a particular case when $n=2$ and $a_{ij}= \delta_{ij}$(hence $\nu(x)\equiv 1$).

Before proving rigidity, we recall  the Brothers and Ziemer theorem contained in \cite{BZ}.

\begin{theorem}\label{brotherziemer}
    Let $u$ be a nonnegative function in $W_0^{1,p}(\Omega)$ with $1<p<\infty$, such that
    \[\abs{\{x:\nabla u^\sharp(x)=0\}\cap u^{\sharp^{-1}}(0,\norma{u}_\infty)}=0.\]
 If the following equality holds:
    \[\int_\Omega\abs{\nabla u}^p\,dx=\int_{\Omega^\sharp}\abs{\nabla u^\sharp}^p\,dx,\]
    then, there exists $x_0 \in \R^n$ such that $$\Omega= \Omega^\sharp+ x_0, \qquad \text{and} \qquad u=\pm u^\sharp(\cdot +x_0).$$
\end{theorem}
Now we are able to prove the rigidity result.
\begin{proof}[Proof of Theorem \ref{teorig}]
We start by recalling that
\[
T(\Omega,a_{ij})=\int_G \tilde u\,dx,
\qquad 
T\!\left(\Omega^\OO,\nu^*(\omega_n|x|^n-|S|)\delta_{ij}\right)
=\int_{G^\sharp}\tilde V\,dx.
\]

We introduce the extension at zero of $(\nu^\ast)^{-1}$:
\[
\overline{\nu}^{-1}(s)=
\begin{cases}
0 & \text{if } s\in(0,|S|),\\[4pt]
\bigl(\nu^*(s-|S|)\bigr)^{-1} & \text{if } s\in(|S|,|G|).
\end{cases}
\]

If equality holds, then all terms in \eqref{saintven} are equal. In particular,
\begin{equation*}
\int_0^{|G|} s^{\frac{2}{n}}\zeta^{-1}(s)\,ds
=
\int_0^{|G|} s^{\frac{2}{n}}\overline{\nu}^{-1}(s)\,ds.
\end{equation*}

Since the function $s \mapsto s^{2/n}$ is nonnegative, strictly increasing, and absolutely continuous, the equality case in the Hardy--Littlewood inequality (see \cite{BM}) implies
\begin{equation}\label{zeta=nu}
\zeta^{-1}(s)=\overline{\nu}^{-1}(s)
\quad \text{for a.e. } s\in(0,|G|),
\end{equation}
and $\nu^{-1}$ has the same superlevel sets as $|u|$.

Substituting \eqref{zeta=nu} into the definition of $h$ and using \eqref{puntuale}, we obtain
\[
\tilde u^*(s)\le h(s)=\tilde V^*(s)
\quad \text{for all } s\in(0,|G|).
\]

Moreover,
\[
\int_G \tilde u\,dx
=
\int_{G^\sharp} \tilde u^\sharp\,dx
=
T(\Omega,a_{ij})
=
T\!\left(\Omega^\OO,\nu^*(\omega_n|x|^n-|S|)\delta_{ij}\right)
=
\int_{G^\sharp}\tilde V\,dx,
\]
hence
\begin{equation}\label{uast=V}
\tilde u^\sharp=\tilde V,
\end{equation}
which implies that the gradient vanishes only on the top level set.

\medskip

Equality in \eqref{saintven} also implies equality in \eqref{buonocoreisoperimetrica1}, namely
\[
-\frac{d}{dt}\int_{\{|u|>t\}}\nu|\nabla u|^2\,dx
=
-\frac{d}{dt}\int_{\{|u|>t\}} a_{ij}u_{x_i}u_{x_j}\,dx.
\]
Together with
\[
\int_{\{|u|>\|u\|_\infty\}}\nu|\nabla u|^2\,dx
=
\int_{\{|u|>\|u\|_\infty\}}a_{ij}u_{x_i}u_{x_j}\,dx
=0,
\]
this yields
\begin{equation*}
\int_\Omega \nu|\nabla u|^2\,dx
=
\int_\Omega a_{ij}u_{x_i}u_{x_j}\,dx
=
T(\Omega,a_{ij}).
\end{equation*}
Using \eqref{uast=V}, we obtain
\begin{equation}\label{equalityint}
\int_{\Omega^\OO}
\nu^*(\omega_n|x|^n-|S|)
|\nabla \tilde u^\sharp|^2\,dx
=
\int_\Omega \nu(x)|\nabla \tilde u|^2\,dx.
\end{equation}
From \eqref{equalityint} and coarea formula it follows that
\begin{equation}\label{bmm}
\int_0^{+\infty} 
\!\!\left(\int_{\{\nu^\ast >t\} \cup S^{\sharp}} 
|\nabla \tilde u^\sharp|^2 \, dx \right) dt
=
\int_0^{+\infty} 
\!\!\left(\int_{\{\nu >t\} \cup S} 
|\nabla \tilde u|^2 \, dx \right) dt.
\end{equation}
Arguing as in \cite{BM}, since $\nu^{-1}$ has the same superlevel sets as $|u|$, one can prove that
\[
\int_{\{\nu^\ast >t\} \cup S^{\sharp}} 
|\nabla \tilde u^\sharp|^2 \, dx
\le
\int_{\{\nu >t\} \cup S} 
|\nabla \tilde u|^2 \, dx
\quad \forall t \ge 0.
\]
Combining with \eqref{bmm}, we deduce equality for all $t$, and in particular
\[
\int_{G^\sharp} |\nabla \tilde u^\sharp|^2 \, dx
=
\int_G |\nabla \tilde u|^2 \, dx.
\]
By Theorem \ref{brotherziemer}, up to translations,
\[
G = G^\sharp,
\qquad
\tilde u = \pm \tilde u^\sharp.
\]
Since the holes correspond to the maximum level sets of $\tilde u$, it follows that
\[
S = S^\sharp.
\]
Moreover, by \eqref{zeta=nu}, $\nu^{-1}$ has the same superlevel sets as $|u|$, hence
\[
\nu(x)=\nu^\ast(\omega_n |x|^n - |S|).
\]
Finally, \eqref{equalityint} becomes
\[
\int_{\Omega^\OO} a_{ij}(x)V_{x_i}V_{x_j}\,dx
=
\int_{\Omega^\OO}
\nu^\ast(\omega_n|x|^n-|S|)|\nabla V|^2\,dx.
\]
Since
\[
a_{ij}(x)\xi_i\xi_j
\ge
\nu^\ast(\omega_n|x|^n-|S|)|\xi|^2,
\quad
\xi_i = V_{x_i} = -x_i\,n^{-1}\nu^\ast(\omega_n|x|^n-|S|)^{-1},
\]
we obtain
\[
\sum_{i,j=1}^n\frac{a_{ij}(x)}{\nu^\ast(\omega_n|x|^n-|S|)}x_ix_j
=
|x|^2
\quad \text{a.e. in } \Omega.
\]
Since the matrix
\[
\frac{a_{ij}(x)}{\nu^\ast(\omega_n|x|^n-|S|)}
\]
is symmetric and its smallest eigenvalue is $\ge 1$, the above identity implies that the first eigenvalue is exactly $1$ and $x$ is a corresponding eigenvector. This concludes the proof.
\end{proof}
\section{Asymmetry of the exterior domain}\label{section4}

In this section we show that the Fraenkel asymmetry index for the exterior domain $G$ can be controlled by the deficit in the P\'olya-Weinstein inequality. 

 To this aim we recall the definition of $s_G$, given in \cite{BD}:
    \begin{equation*}
        s_{G}=\sup\left\{ t\geq 0: \mu_{\tilde u}(t)\geq \abs{G}\left(1-\dfrac{\alpha(G)}{4}\right)\right\} \in \R.
    \end{equation*}
We recall that the tilde notation is used to denote the extension of a function by constant values on the holes of the domain.
Then, the following statement holds.
\begin{lemma}\label{4.1}
Let $\Omega\subset\R^n$ satisfy Definition \ref{multiplyconnected} then, it holds
    \begin{equation}
        \label{essamitorsion}
         T(\Omega^\OO)- T(\Omega)\geq s_G\abs{G}\frac{\alpha(G)^2}{4\gamma_n},
    \end{equation}

    where $\gamma_n$ is the constant appearing in the quantitative isoperimetric inequality \eqref{quant_isop}.
\end{lemma}
\begin{proof}
We assume that $\alpha(G)>0$, since otherwise the statement is trivial. 
Applying the quantitative isoperimetric inequality \eqref{quant_isop} to the superlevel set $\{\tilde u > t\}$, we obtain
\begin{equation*}
n^2\omega_n^{\frac{2}{n}}\mu_{\tilde u}^{2-\frac{2}{n}}(t)\left(1+\frac{2}{\gamma_n}\alpha^2(\tilde u>t)\right)
\leq 
n^2\omega_n^{\frac{2}{n}}\mu_{\tilde u}^{2-\frac{2}{n}}(t)\left(1+\frac{1}{\gamma_n}\alpha^2(\tilde u>t)\right)^2
\leq P^2(\tilde u>t).
\end{equation*}

Arguing as in the proof of Theorem \ref{buonocore}  for the right-hand side, that is and using the Fleming--Rishel formula \eqref{flemingrishel} together with H\"older's inequality, we deduce
\begin{equation*}
n^2\omega_n^{\frac{2}{n}}\mu_{\tilde u}^{2-\frac{2}{n}}(t)
\left(1+\frac{2}{\gamma_n}\alpha^2(\tilde u>t)\right)
\leq -\mu'_{\tilde u}(t)\,\zeta^{-1}\!\bigl(\mu_{\tilde u}(t)\bigr)\,\mu_{\tilde u}(t).
\end{equation*}
Dividing by $n^2\omega_n^{\frac{2}{n}}\mu_{\tilde u}^{2-\frac{2}{n}}(t)$ and integrating between $0$ and $\tau$, after a change of variables we obtain
\begin{equation*}
\tau + \frac{2}{\gamma_n}\int_0^\tau \alpha^2(\tilde u>t)\,dt
\leq n^{-2}\omega_n^{-\frac{2}{n}}
\int_{\mu_{\tilde u}(\tau)}^{|G|} r^{-1+\frac{2}{n}}\zeta^{-1}(r)\,dr.
\end{equation*}
By the definition of the decreasing rearrangement, this can be rewritten as
\begin{equation*}
\frac{2}{\gamma_n}\int_0^{\tilde u^*(r)} \alpha^2(\tilde u>t)\,dt
\leq n^{-2}\omega_n^{-\frac{2}{n}}
\int_r^{|G|} t^{-1+\frac{2}{n}}\zeta^{-1}(t)\,dt - \tilde u^*(r)
:= h(r)-\tilde u^*(r).
\end{equation*}
Integrating over $[0,|G|]$, we obtain
\begin{equation*}
\begin{aligned}
\frac{2}{\gamma_n}\int_0^{|G|}\int_0^{\tilde u^*(r)} \alpha^2(\tilde u>t)\,dt\,dr
&\leq n^{-2}\omega_n^{-\frac{2}{n}}
\int_0^{|G|}\int_r^{|G|} t^{-1+\frac{2}{n}}\zeta^{-1}(t)\,dt\,dr
- \int_0^{|G|}\tilde u^*(r)\,dr \\
&= \int_0^{|G|} h(r)\,dr - T(\Omega).
\end{aligned}
\end{equation*}
Therefore, by \eqref{saintven}, it follows that
\begin{equation}\label{common41e54}
\frac{2}{\gamma_n}\int_0^{|G|}\int_0^{\tilde u^*(r)} \alpha^2(\tilde u>t)\,dt\,dr
\leq T(\Omega^{\mathrm{O}}) - T(\Omega).
\end{equation}
We now define
\begin{equation*}
A := \left\{ t \geq 0 : \mu_{\tilde u}(t) \geq |G|\left(1-\frac{\alpha(G)}{4}\right) \right\}.
\end{equation*}
Then $0\in A$ whenever $\alpha(G)>0$, and $A$ is an interval since $\mu_{\tilde u}$ is decreasing. Moreover, for every $t\in A$,
\begin{equation*}
\frac{|G\setminus\{\tilde u>t\}|}{|G|}
= 1 - \frac{\mu_{\tilde u}(t)}{|G|}
\leq \frac{\alpha(G)}{4}.
\end{equation*}
Hence, by Lemma \ref{propasi},
\begin{equation*}
\alpha(\tilde u>t) \geq \frac{\alpha(G)}{2}
\qquad \text{for all } t\in A.
\end{equation*}
Let $s_G := \sup A$. It remains to estimate the left-hand side of \eqref{common41e54} from below. We have
\begin{equation*}
\begin{aligned}
\frac{2}{\gamma_n}\int_0^{|G|}\int_0^{\tilde u^*(r)} \alpha^2(\tilde u>t)\,dt\,dr
&\geq \frac{2}{\gamma_n}\int_0^{|G|}
\int_0^{\min\{\tilde u^*(r), s_G\}} \alpha^2(\tilde u>t)\,dt\,dr \\
&\geq \frac{\alpha^2(G)}{2\gamma_n}
\int_0^{|G|} \min\{\tilde u^*(r), s_G\}\,dr.
\end{aligned}
\end{equation*}
Since $\mu_{\tilde u}$ is right-continuous and $\tilde u^*$ is non-increasing,
\begin{equation*}
\left|\{\tilde u^*(r) \geq \tilde u^*(s_G)\}\right|
= \lim_{s\to s_G^-} |\{\tilde u^*(r) > \tilde u^*(s)\}|
\geq |G|\left(1-\frac{\alpha(G)}{4}\right).
\end{equation*}
Therefore,
\begin{equation*}
\begin{aligned}
\frac{\alpha^2(G)}{2\gamma_n}
\int_0^{|G|} \min\{\tilde u^*(r), s_G\}\,dr
&\geq \frac{\alpha^2(G)}{2\gamma_n}
\int_0^{|G|\left(1-\frac{\alpha(G)}{4}\right)}
\min\{\tilde u^*(r), s_G\}\,dr \\
&= \frac{\alpha^2(G)}{2\gamma_n}
s_G\,|G|\left(1-\frac{\alpha(G)}{4}\right)
\geq \frac{\alpha^2(G)}{4}\,s_G\,|G|.
\end{aligned}
\end{equation*}
This concludes the proof.
\end{proof}
It remains to bound from below the quantity $s_G$ in terms of asymmetry using the approach contained in \cite{Kim}. 

We briefly comment on the additional assumption $\abs{S} \leq \frac{3}{4} \abs{G}$: it may seem unusual, but if $S$ is allowed to be closer and closer to $G$, then the torsion of $G\setminus S$ is getting closer and closer to 0. This remark suggests us that an assumption about the ratio between the measure of $G$ and that of $S$ is necessary.

\begin{prop}\label{lemma4.2}
   Let $\Omega\subset\R^2$ satisfy Definition \ref{multiplyconnected}. If $$\abs{S} \leq \frac{3}{4}\abs{G},$$ then
    \[
    T(\Omega^\OO)- T(\Omega)\geq \frac{\abs{G}^2 }{2^8 \pi \gamma_2} {\alpha(G)^3}.\]
\end{prop}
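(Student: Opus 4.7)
The plan is to combine the previous lemma with a control on $\tilde u^\ast$ on the upper range $[s^\ast, |G|]$, obtained from the definition of $s_G$ together with the pointwise comparison in Corollary \ref{puntualedim2}. These two ingredients will produce two linear inequalities in the deficit $D := T(\Omega^\OO) - T(\Omega)$ and in $s_G$, which are solved simultaneously via a dichotomy on the size of $s_G$.

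First, if $s_G \geq |G|\alpha(G)/(2^6\pi)$, the preceding lemma immediately yields
\[
D \geq \frac{s_G |G|\alpha(G)^2}{8\gamma_2} \geq \frac{|G|\alpha(G)}{2^6\pi}\cdot\frac{|G|\alpha(G)^2}{8\gamma_2} = \frac{|G|^2\alpha(G)^3}{2^9 \pi \gamma_2},
\]
which is the claim. In the alternative case $s_G < |G|\alpha(G)/(2^6\pi)$, the definition of $s_G$ gives $\tilde u^\ast(s) \leq s_G$ for $s > s^\ast := |G|(1-\alpha(G)/4)$. Combining with $\tilde u^\ast(s) \leq V(s)$ and the identity $T(\Omega^\OO)=\int_0^{|G|} V(s)\,ds$, one obtains
\[
D = \int_0^{|G|}(V(s) - \tilde u^\ast(s))\,ds \;\geq\; \int_{s^\ast}^{|G|} V(s)\,ds \;-\; \frac{s_G |G|\alpha(G)}{4}.
\]

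The next step is an explicit computation of $V$ from \eqref{defV}: in dimension $2$ with $\nu\equiv 1$ one has $V(s) = (|G|-\max(s,|S|))/(4\pi)$. The hypothesis $|S|\leq 3|G|/4$ ensures $s^\ast \geq |S|$ in the main regime (namely when $\alpha(G)\leq 1$), so
\[
\int_{s^\ast}^{|G|} V(s)\,ds = \frac{(|G|-s^\ast)^2}{8\pi} = \frac{|G|^2\alpha(G)^2}{2^7\pi}.
\]
Inserting this together with $s_G < |G|\alpha(G)/(2^6\pi)$ produces
\[
D \geq \frac{|G|^2\alpha(G)^2}{2^7\pi} - \frac{|G|^2\alpha(G)^2}{2^8\pi} = \frac{|G|^2\alpha(G)^2}{2^8\pi} \geq \frac{|G|^2\alpha(G)^3}{2^9\pi} \geq \frac{|G|^2\alpha(G)^3}{2^9\pi\gamma_2},
\]
where the last two estimates use $\alpha(G)\leq 2$ and $\gamma_2\geq 1$ respectively.

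The main obstacle is the complementary range $\alpha(G) > 4(|G|-|S|)/|G|$, which under the assumption $|S| \leq 3|G|/4$ forces $\alpha(G) > 1$. Here $s^\ast < |S|$, so $V$ is constant on $[s^\ast,|S|]$ and the integral $\int_{s^\ast}^{|G|} V(s)\,ds$ must be re-expanded as $(|S|-s^\ast)(|G|-|S|)/(4\pi) + (|G|-|S|)^2/(8\pi)$. A direct case analysis, exploiting $|G|-|S|\geq |G|/4$ and the sharper bound $s_G \leq V(s^\ast) = (|G|-|S|)/(4\pi)$, still yields a lower bound on $D$ of order $|G|^2$ with an absolute constant. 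Since in this regime $\alpha(G)^3 \leq 8$ and $\gamma_2$ is very large, such a crude bound comfortably dominates the target $|G|^2\alpha(G)^3/(2^9\pi\gamma_2)$.
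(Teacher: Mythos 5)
Your proof is correct in substance and follows the same skeleton as the paper's: the identical dichotomy on $s_G$ against the threshold $t_1=\abs{G}\alpha(G)/(2^6\pi)$, with the first case settled verbatim by \eqref{essamitorsion}. The difference is in the second case. The paper compares distribution functions on the level band $[t_1,2t_1]$, calibrating $2t_1$ by $\mu_{\tilde V}(2t_1)=\abs{G}\bigl(1-\tfrac{\alpha(G)}{8}\bigr)$, so that $\abs{S}\le\tfrac34\abs{G}$ together with $\alpha(G)\le 2$ keeps this level set above $\abs{S}$ and no further splitting is needed; you instead integrate $\tilde V-\tilde u^\ast$ over the measure band $[s^\ast,\abs{G}]$ with $s^\ast=\abs{G}\bigl(1-\tfrac{\alpha(G)}{4}\bigr)$, using Corollary \ref{puntualedim2}, $\tilde u^\ast\le s_G$ past $s^\ast$, and the explicit form of $\tilde V$ from \eqref{defV}; this is a legitimate variant but it forces the extra sub-case $s^\ast<\abs{S}$, which you only assert. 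Moreover, the hint you give there is misdirected: the ``sharper bound'' $s_G\le \tilde V(s^\ast)=(\abs{G}-\abs{S})/(4\pi)$ by itself yields only $T(\Omega^\OO)-T(\Omega)\ge -(\abs{G}-\abs{S})^2/(8\pi)$, i.e.\ nothing. What closes the sub-case is the case hypothesis $s_G<\abs{G}\alpha(G)/(2^6\pi)$ that you already have: writing $\alpha=\alpha(G)$, $d=\abs{G}-\abs{S}\in[\abs{G}/4,\abs{G}\alpha/4)$ (so $\alpha>1$), one gets
\begin{equation*}
\int_{s^\ast}^{\abs{G}}\tilde V(s)\,ds=\frac{d}{16\pi}\left(\abs{G}\alpha-2d\right)\ge\frac{\abs{G}^2}{64\pi}\left(\alpha-\frac12\right),
\qquad
T(\Omega^\OO)-T(\Omega)\ge\frac{\abs{G}^2}{2^8\pi}\left(4\alpha-2-\alpha^2\right)\ge\frac{\abs{G}^2}{2^8\pi},
\end{equation*}
since $4\alpha-2-\alpha^2=2-(2-\alpha)^2\ge1$ for $\alpha\in(1,2]$, and this dominates the target $\abs{G}^2\alpha^3/(2^9\pi\gamma_2)\le\abs{G}^2/(2^6\pi\gamma_2)$ because $\gamma_2\ge 4$. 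With this computation supplied, your argument is complete; the paper's calibration of the band at level $\alpha/8$ on the $\tilde V$ side is precisely the device that avoids this extra sub-case, at the cost of working with distribution functions rather than with the explicit integral of $\tilde V$.
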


\begin{proof}
     We define $t_1$ such that 
    $$
   \mu_{\tilde V}(2t_1) = \abs{G}\left(1- \frac{1}{8} \alpha(G)\right).
    $$
    By $\abs{S} \leq \frac{3}{4}\abs{G}$, we have $\mu_{\tilde V}(2t_1) \geq \abs{S}$. Therefore, the following holds
    \begin{equation*}
        \frac{\abs{G}}{8} \alpha(G) =\abs{G}-\abs{G}\left(1- \frac{1}{8} \alpha(G)\right)= \mu_{\tilde V}(0)- \mu_{\tilde V}(2t_1)=- \int_{0}^{2t_1} \mu_{\tilde V}'(s)\,ds= \int_{0}^{2t_1} {4\pi}= 8\pi t_1.
    \end{equation*}
    Now let us distinguish 2 cases:
    \begin{itemize}
        \item  $s_G \geq t_1 \geq \dfrac{\abs{G}}{2^6\pi} \alpha(G)$, then from \eqref{essamitorsion}, we obtain
        \begin{equation}\label{caso1}
         T(\Omega^\OO)- T(\Omega)\geq  \dfrac{\abs{G}^2}{2^8\pi\gamma_2} \alpha^3(G);
        \end{equation}
        \item $s_G <t_1$, then by Corollary \ref{puntualedim2}, we have
        \begin{equation}
        \label{93}
        T(\Omega^\OO)-T(\Omega) = 
              {\lVert\tilde V\rVert_1}-{\norma{\tilde u}_1}  = \int_0^{\norma{\tilde V}_\infty} \left(\mu_{\tilde V}(t)-\mu_{\tilde u}(t)\right)\, dt \geq \int_{t_1}^{2t_1} \left(\mu_{\tilde V}(t)-\mu_{\tilde u}(t)\right)\, dt.
        \end{equation}
        Since $s_{G} < t_1 \leq t \leq 2t_1$, we have both
        \begin{equation}
        \label{nufinale}
        \mu_{\tilde V}(t) \geq \mu_{\tilde V}(2t_1)= \abs{G}\left(1- \frac{1}{8} \alpha(G)\right),
        \end{equation}
        and
        \begin{equation}\label{mufinale}
            \mu_{\tilde u}(t) \leq \abs{G}\left(1- \frac{1}{4} \alpha(G)\right).
        \end{equation}
      
        Consequently, combining \eqref{93}, \eqref{nufinale}, and \eqref{mufinale} we have
        \begin{equation}\label{caso2}
        \begin{aligned}
            {\lVert\tilde V\rVert_1}-{\norma{\tilde u}_1}&\geq \int_{t_1}^{2t_1} \left(\mu_{\tilde V}(t)-\mu_{\tilde u}(t)\right)\, dt\geq \frac{\abs{G}}{2^3} \alpha(G)\int_{t_1}^{2t_1} \, dt\\&\geq 
            \frac{\abs{G}}{2^3} \alpha(G)t_1 = 
            \frac{\abs{G}^2}{2^9\pi} \alpha^2(G) 
            \geq
            \frac{\abs{G}^2}{2^{10}\pi} \alpha^3(G).
        \end{aligned}
        \end{equation}   
    \end{itemize}
    Therefore, the thesis follows by taking the constant to be the minimum of the quantities in \eqref{caso1} and \eqref{caso2}.
\end{proof}

\section{Asymmetry of the holes}\label{section5}This section is devoted to proving that the Fraenkel asymmetry index of the holes $S$ can be controlled by the deficit in the P\'olya–Weinstein inequality. Recall also that the tilde notation is used to denote the extension of a function by constant values on the holes of the domain.

To prove this property, we replace the set $S$ with 
\[
\tilde S = D_{\tilde u}(|S|),
\] 
 since the holes are not necessarily level sets; introducing $\tilde S$ allows us to apply rearrangement techniques. Using the definition of $D_{\tilde u}(s)$ given in Section \ref{section2}, we establish the following two lemmas concerning  the asymmetry of $\tilde S$ and  the difference between $S$ and $\tilde S$. We emphasize that 
\[
\tilde S \subset G.
\]

\begin{lemma}\label{lemma5.1}
	Let $\Omega\subset\R^2$ satisfy Definition \ref{multiplyconnected}. Let $\tilde{S}=D_{\tilde u}(\abs{S})$ then 
\begin{equation*}
		T(\Omega^\OO)-T(\Omega) \geq \frac{\abs{S}}{4\pi}\abs{S\setminus\tilde{S}}
\end{equation*}
	
\end{lemma}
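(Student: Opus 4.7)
My plan is to combine the pointwise decreasing-rearrangement bound $\tilde u^*(s)\leq w(s)$ from \eqref{puntuale} with an explicit evaluation of the resulting defect in dimension $n=2$ with $\nu\equiv 1$. Writing $T(\Omega) = \int_G \tilde u = \int_0^{\abs{G}} \tilde u^*(s)\, ds$ and $T(\Omega^\OO) = \int_0^{\abs{G}} V^*(s)\, ds$, and noting the explicit primitives $V^*(s) = \frac{1}{4\pi}\int_s^{\abs{G}} \mathbf{1}_{\{r>\abs{S}\}}\, dr$ and $w(s) = \frac{1}{4\pi}\int_s^{\abs{G}} \zeta^{-1}(r)\, dr$, an application of Fubini yields the reduction
\[
T(\Omega^\OO) - T(\Omega) \;\geq\; \frac{1}{4\pi}\int_0^{\abs{G}} r\bigl[\mathbf{1}_{\{r>\abs{S}\}} - \zeta^{-1}(r)\bigr]\, dr.
\]

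Next I would make the integrand completely explicit using the structure of $\zeta^{-1}$ from \eqref{zeta}. Ordering the holes so that $c_1 < \dots < c_m$, the function $\zeta^{-1}$ equals $1$ on $(0,\abs{G})$ except on $m$ disjoint intervals of length $\abs{\Omega_h}$ placed at $a_h := \mu(c_h) + \sum_{k>h}\abs{\Omega_k}$. A direct computation of $\int_{a_h}^{a_h+\abs{\Omega_h}} r\, dr = \abs{\Omega_h}a_h + \abs{\Omega_h}^2/2$ and summation, combined with the combinatorial identity $\sum_h \abs{\Omega_h}\sum_{k>h}\abs{\Omega_k} + \tfrac{1}{2}\sum_h \abs{\Omega_h}^2 = \abs{S}^2/2 = \int_0^{\abs{S}} r\, dr$, produces the clean expression
\[
\int_0^{\abs{G}} r\bigl[\mathbf{1}_{\{r>\abs{S}\}} - \zeta^{-1}(r)\bigr]\, dr \;=\; \sum_{h=1}^m \abs{\Omega_h}\,\mu(c_h).
\]

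The last step relates $\sum_h \abs{\Omega_h}\mu(c_h)$ to $\abs{S\setminus\tilde S}$ by a level-set argument. Set $t^* := \tilde u^*(\abs{S})$. In the generic case $t^* \neq c_i$ for any $i$, $\tilde S = \{\tilde u > t^*\}$ and $S\setminus\tilde S = \bigcup_{c_h<t^*} \Omega_h$; for each such index $h$ the superlevel set $\{\tilde u > c_h\}$ contains $\tilde S$, so $\mu(c_h) \geq \abs{\tilde S} = \abs{S}$. Dropping the nonnegative terms with $c_h\geq t^*$ yields
\[
\sum_{h=1}^m \abs{\Omega_h}\,\mu(c_h) \;\geq\; \abs{S}\sum_{c_h<t^*}\abs{\Omega_h} \;=\; \abs{S}\cdot\abs{S\setminus\tilde S},
\]
which together with the previous display proves the lemma. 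The main delicate point is the degenerate case in which $\abs{S}$ falls inside a jump of $\mu_{\tilde u}$ at some $t^* = c_{h_0}$ (so that $\mu(c_{h_0}) < \abs{S}$ and a portion of $\Omega_{h_0}$ of measure $\abs{S}-\mu(c_{h_0})$ must be included in $\tilde S$): a short case analysis parametrized by $x := \abs{S}-\mu(c_{h_0}) \in [0,\abs{\Omega_{h_0}}]$, exploiting $\mu(c_{h_0}^-) = \mu(c_{h_0}) + \abs{\Omega_{h_0}} \geq \abs{S}$, shows that the loss of the naive bound on the $h_0$-term is exactly compensated by the decrease in $\abs{S\setminus\tilde S}$.
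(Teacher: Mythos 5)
Your proof is correct, and while it rests on the same two ingredients as the paper's argument --- the pointwise bound $\tilde u^*(s)\le w(s)=\frac{1}{4\pi}\int_s^{\abs{G}}\zeta^{-1}(r)\,dr$ from \eqref{puntuale} and the explicit block structure \eqref{zeta} of $\zeta^{-1}$ --- it organizes them more carefully and lands on a strictly stronger estimate. The paper's proof discards the contribution of $\int_0^{\abs{S}}$, applies the $\zeta^{-1}$ bound over $(\abs{S},\abs{G})$ only, and then ``exchanges the order of integration''; but the genuine Fubini exchange of $\int_{\abs{S}}^{\abs{G}}\int_s^{\abs{G}}\,dr\,ds$ produces the weight $r-\abs{S}$, not $r$, and with that weight the displayed chain does not by itself reach $\frac{\abs{S}}{4\pi}\abs{S\setminus\tilde{S}}$ (already for a single hole with $\abs{\Omega_1}\le\mu(c_1)<\frac{3}{2}\abs{\Omega_1}$ the corrected integral falls short). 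Your route --- keeping the full range $[0,\abs{G}]$, applying Fubini once, and evaluating the resulting weighted integral exactly --- yields the clean identity $T(\Omega^\OO)-T(\Omega)\ge\frac{1}{4\pi}\sum_h\abs{\Omega_h}\mu(c_h)$; dropping the holes retained in $\tilde{S}$ and using $\mu(c_h)\ge\abs{\tilde{S}}=\abs{S}$ for the omitted ones then gives the lemma, and your bookkeeping for the degenerate boundary hole $h_0$ (a loss of $\abs{\Omega_{h_0}}\,x$ on the sum side versus a drop of $\abs{S}\,x\ge\abs{\Omega_{h_0}}\,x$ on the target side) correctly resolves the case in which $D(\abs{S})$ cuts through a level set of $\tilde u$. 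So your argument is not merely an alternative route to the paper's: it is a more robust version of the same idea and also yields the explicit, quantitatively sharper lower bound $\frac{1}{4\pi}\sum_h\abs{\Omega_h}\mu(c_h)$.
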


\begin{lemma}\label{lemma5.2}
	Let $\Omega\subset\R^2$ satisfy Definition \ref{multiplyconnected}. Let $\tilde{S}=D_{\tilde u}(\abs{S})$, if
    \[\abs{S}\leq\frac{2}{3}\abs{G}.\]
    Then 
    \begin{equation*}
    T(\Omega^\OO)-T(\Omega) \geq \frac{\abs{S}^2\alpha(\tilde{S})^3}{3^22^4\pi \gamma_ 2}.
\end{equation*}
\end{lemma}
Here, again, we remark that an assumption about the ratio between the measure of $G$ and that of $S$ is necessary for these kind of proofs.

Let us start with Lemma \ref{lemma5.1}. 

\begin{proof}[Proof of Lemma \ref{lemma5.1}]
	By definition, \begin{equation*}      T(\Omega^\OO)=\int_0^\abs{G}\tilde V^\ast(s)\,ds=\int_\abs{S}^\abs{G}V^\ast(s)\,ds+V^\ast(0)\abs{S},
	\end{equation*}
	and by remark  Corollary \ref{puntualedim2}
	\begin{equation*}
		T(\Omega)=\int_0^\abs{G}\tilde u^\ast(s)\,ds\leq \int_\abs{S}^\abs{G}\tilde u^\ast(s)\,ds+V^\ast(0)\abs{S}.
	\end{equation*}
	Hence, using the definition above and \eqref{puntuale} the term $T(\Omega^\OO)-T(\Omega)$ can be bounded from below as
	\begin{equation*}
		T(\Omega^\OO)-T(\Omega)\geq\int_\abs{S}^\abs{G}V^\ast(s)-\tilde u^\ast(s)\,ds\geq \frac{1}{4\pi}\int_\abs{S}^\abs{G}\int_s^\abs{G} \nu^\ast(r)^{-1}-\zeta^{-1}(r)\,dr\,ds,
	\end{equation*}
	and exchanging the order of integration, we have
	\begin{equation}
    \label{28}
		T(\Omega^\OO)-T(\Omega)\geq\frac{1}{4\pi}\int_\abs{S}^\abs{G}\int_0^r
		\nu^\ast(r)^{-1}-\zeta^{-1}(r)\,ds\,dr\geq\frac{\abs{S}}{4\pi}\int_\abs{S}^\abs{G}1-\zeta^{-1}(r)\,dr,
	\end{equation}
	the last inequality is due to $\nu^\ast(r)^{-1}=1$ whenever $r\geq\abs{S}$. But the definition \eqref{zeta} of $\zeta^{-1}$ characterizes the integrand term as
	\begin{equation*}
		1-\zeta^{-1}(r)=\begin{cases}
			\displaystyle{0} & \text{ if } r\in\left(0,\mu_u(c_m)\right)\\
			\displaystyle{1} & \text{ if } r\in\left(\mu_u(c_m),\mu_u(c_m)+|\Omega_m|\right) \text{or }r\in I_h\\
			\displaystyle{0} & \text{ if } r\in J_h
		\end{cases}.
	\end{equation*}
	Thus, the integral in  \eqref{28} is
	\begin{equation*}
		T(\Omega^\OO)-T(\Omega)\geq\frac{\abs{\tilde{S}}}{4\pi}\left(\sum_{h:\mu_u(c_h)>\abs{\tilde{S}}}\abs{\Omega_h}+\mu_u(c_k)+\abs{\Omega_k}-\abs{\tilde{S}}\right)=\frac{\abs{S}}{4\pi}\abs{\bigcup_{i=1}^m\Omega_i\setminus\tilde{S}},
	\end{equation*}
	whereas the term out of the summation appears in the case where there exists some $1\leq k\leq m$ such that $\mu_u(c_k)<\abs{\tilde{S}}<\mu_u(c_k)+\abs{\Omega_k}$ and moreover $\abs{\tilde S}=\abs{S}$ by definition. 
\end{proof}

 Now the idea is to obtain an estimate on the asymmetry of $\tilde S=D_{\tilde u}(\abs{S})$ by "propagating" the asymmetry between level set as in \cite{BD}. However, now propagation must be done outward and is possible by exploiting the following adaptation of the Lemma \ref{propasi}.
\begin{lemma}\label{propasiback}
    Let $\Omega \subset U \subset \mathbb{R}^n$ be sets of finite measure. Suppose that 
    \[ \frac{|U \setminus \Omega|}{|\Omega|} \leq \frac{1}{4} \alpha(\Omega), \] 
    then, it holds: 
    \[ \alpha(U) \geq \frac{1}{3} \alpha(\Omega). \]
\end{lemma}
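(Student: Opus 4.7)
\begin{proof}[Proof proposal of Lemma \ref{propasiback}]
The plan is to transfer asymmetry from $U$ back to $\Omega$ by comparing with concentric optimal balls, losing only a controlled amount of mass in the process. Set $\alpha=\alpha(\Omega)$ and $\delta=|U\setminus\Omega|/|\Omega|$; by hypothesis $\delta\le\alpha/4$. Let $B$ be a ball realizing the Fraenkel asymmetry of $U$, i.e.\ $|B|=|U|$ and $|U\triangle B|=\alpha(U)|U|$, and let $B'$ be the ball concentric with $B$ satisfying $|B'|=|\Omega|$. Since $\Omega\subset U$ we have $|B'|\le|B|$, so $B'\subset B$.

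The key step is the set-theoretic estimate
\begin{equation*}
|\Omega\triangle B'|\le |U\triangle B|+2|U\setminus\Omega|.
\end{equation*}
Indeed, writing $\Omega\setminus B'\subset U\setminus B'=(U\setminus B)\cup(B\setminus B')$ gives $|\Omega\setminus B'|\le |U\setminus B|+|B\setminus B'|$, and $B'\setminus\Omega\subset B\setminus\Omega=(B\setminus U)\cup(U\setminus\Omega)$ gives $|B'\setminus\Omega|\le |B\setminus U|+|U\setminus\Omega|$. Summing and noting $|B\setminus B'|=|U|-|\Omega|=|U\setminus\Omega|$ yields the claim.

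Now by definition of $\alpha(\Omega)$ and $\alpha(U)$,
\begin{equation*}
\alpha\,|\Omega|\le |\Omega\triangle B'|\le \alpha(U)\,|U|+2|U\setminus\Omega|=\alpha(U)|\Omega|(1+\delta)+2\delta|\Omega|.
\end{equation*}
Dividing by $|\Omega|$ and rearranging,
\begin{equation*}
\alpha(U)\ge \frac{\alpha-2\delta}{1+\delta}\ge \frac{\alpha-\alpha/2}{1+\alpha/4}=\frac{\alpha/2}{1+\alpha/4}.
\end{equation*}
Finally, since the Fraenkel asymmetry satisfies $\alpha\le 2$, we have $1+\alpha/4\le 3/2$, hence $\alpha(U)\ge \alpha/3$, as required.
\end{proof}

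The main obstacle is really only bookkeeping: one has to choose the ball for $\Omega$ \emph{concentric} with the optimal ball for $U$ (any other choice ruins the estimate), and one has to observe that $|B\setminus B'|$ equals exactly $|U\setminus\Omega|$ so the ``extra'' term picks up a clean factor $2$. Everything else is a one-line algebraic simplification using $\alpha(\Omega)\le 2$.
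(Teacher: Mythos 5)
Your proof is correct and essentially identical to the paper's: the same concentric ball $B'$, the same key identity $\abs{B\setminus B'}=\abs{U\setminus\Omega}$, and the same closing step using $\alpha(\Omega)\le 2$. You establish the set-theoretic estimate $\abs{\Omega\triangle B'}\le\abs{U\triangle B}+2\abs{U\setminus\Omega}$ by direct inclusions rather than by two applications of the triangle inequality, but this is a cosmetic rearrangement of the same argument (and you correctly invoke $\alpha(\Omega)\le 2$, which appears with a sign typo as ``$\alpha(\Omega)\ge 2$'' in the paper's proof).
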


\begin{proof}
    Let $B$ be a ball achieving the infimum for $\alpha(U)$, so that $|B| = |U|$ and $\alpha(U) = \frac{|U \triangle B|}{|U|}$. Using the properties of the symmetric difference and the triangle inequality ($|B \triangle C| - |B \triangle A|\leq|A \triangle C|$), we have:
    \begin{equation*}
        |U \triangle B| \geq |\Omega \triangle B| - |U \triangle \Omega|.
    \end{equation*}
    Since $\Omega \subset U$, we have $|U \triangle \Omega| = |U \setminus \Omega| = |U| - |\Omega|$. Dividing by $|U|$, we obtain:
    \begin{equation*}
        \alpha(U) \geq \frac{|\Omega|}{|U|} \left( \frac{|\Omega \triangle B|}{|\Omega|} - \frac{|U \setminus \Omega|}{|\Omega|} \right).
    \end{equation*}
    Now, let $B'$ be a ball concentric to $B$ with measure $|B'| = |\Omega|$. Since $\Omega \subset U$, $B'$ is a subset of $B$. Thus, $|B \triangle B'| = |B| - |B'| = |U| - |\Omega| = |U \setminus \Omega|$. Applying the triangle inequality again:
    \begin{equation*}
        |\Omega \triangle B| \geq |\Omega \triangle B'| - |B \triangle B'| = |\Omega \triangle B'| - |U \setminus \Omega|.
    \end{equation*}
    By the definition of $\alpha(\Omega)$, we know $\frac{|\Omega \triangle B'|}{|\Omega|} \geq \alpha(\Omega)$. Substituting this back:
    \begin{equation*}
        \alpha(U) \geq \frac{|\Omega|}{|U|} \left( \alpha(\Omega) - 2\frac{|U \setminus \Omega|}{|\Omega|} \right).
    \end{equation*}
    From the assumption $\frac{|U \setminus \Omega|}{|\Omega|} \leq \frac{1}{4}\alpha(\Omega)$ and the fact that $\alpha(\Omega) \leq 2$, we have $\frac{|U \setminus \Omega|}{|\Omega|} \leq \frac{1}{2}$. This implies:
    \begin{equation*}
        \frac{|\Omega|}{|U|} = \frac{|\Omega|}{|\Omega| + |U \setminus \Omega|} = \frac{1}{1 + \frac{|U \setminus \Omega|}{|\Omega|}} \geq \frac{1}{1 + 1/2} = \frac{2}{3}.
    \end{equation*}
    Finally, combining these inequalities:
    \begin{equation*}
        \alpha(U) \geq \frac{2}{3} \left( \alpha(\Omega) - \frac{1}{2}\alpha(\Omega) \right) = \frac{2}{3} \cdot \frac{1}{2}\alpha(\Omega) = \frac{1}{3}\alpha(\Omega).
    \end{equation*}
\end{proof}
Let us define the new threshold as
\begin{equation}\label{thresholdAs}
\begin{aligned}
        s_{\tilde{ S}}&=\inf\left\{ \tau\geq 0: \mu_{\tilde u}(\tau)\leq \abs{ S }\left(1+\dfrac{\alpha(\tilde S )}{4}\right)\right\}\\
    &=\sup\left\{ \tau \geq 0 : \mu_{\tilde u}(\tau)> \abs{ S }\left(1+\dfrac{\alpha(\tilde S )}{4}\right)\right\} \in \R.
    \end{aligned}
\end{equation}
Then, the following lemma holds.
\begin{lemma}
Let $\Omega\subset\R^n$ satisfy Definition \ref{multiplyconnected}. Then, it holds
    \begin{equation}
        \label{essastorsion}
         T(\Omega^\OO)- T(\Omega)\geq  (u^\ast\left(\abs{S}\right) -s_{\tilde{S}})\abs{S}\frac{\alpha(\tilde{ S})^2}{9\gamma_n},
    \end{equation}

    where $\gamma_n$ is the constant appearing in the quantitative isoperimetric inequality \eqref{quant_isop}.
\end{lemma}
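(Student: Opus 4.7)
The plan is to follow the same scheme used in the proof of the analogous outer-boundary lemma \eqref{essamitorsion}, but to propagate asymmetry \emph{inward} via Lemma \ref{propasiback} rather than outward via Lemma \ref{propasi}. I would start from the inequality \eqref{fine} already established there, namely
\[
\frac{2}{\gamma_n}\int_0^{\tilde u^*(r)}\alpha^2(\{\tilde u>t\})\,dt \;\leq\; \tilde V(r)-\tilde u^\ast(r),\qquad r\in(0,|G|).
\]
Integrating over $r\in[0,|G|]$ identifies the right-hand side with $T(\Omega^\OO)-T(\Omega)$, while Fubini's theorem, together with the relation $\tilde u^*(r)>t \Leftrightarrow r<\mu_{\tilde u}(t)$, transforms the left-hand side into $\frac{2}{\gamma_n}\int_0^{\infty}\alpha^2(\{\tilde u>t\})\,\mu_{\tilde u}(t)\,dt$.

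The next step is to restrict the outer integral to $t\in[s_{\tilde S},u^*(|S|)]$ and to bound the integrand from below by $\tfrac{\alpha(\tilde S)^2}{9}|S|$. The factor $\mu_{\tilde u}(t)\geq |S|$ is immediate for $t\leq u^*(|S|)$. For the asymmetry factor I would apply Lemma \ref{propasiback} with $\Omega=\tilde S$ and $U=\{\tilde u>t\}$. The monotonicity of the pseudorearrangement sets ensures $\tilde S=D(|S|)\subseteq D(\mu_{\tilde u}(t))=\{\tilde u>t\}$, so $|U\setminus \tilde S|=\mu_{\tilde u}(t)-|S|$, and the very definition of $s_{\tilde S}$ gives
\[
\frac{|\{\tilde u>t\}\setminus \tilde S|}{|\tilde S|}=\frac{\mu_{\tilde u}(t)-|S|}{|S|}\leq \frac{\alpha(\tilde S)}{4}
\]
for every $t\in[s_{\tilde S},u^*(|S|)]$. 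Lemma \ref{propasiback} then yields $\alpha(\{\tilde u>t\})\geq \alpha(\tilde S)/3$ on this whole interval.

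Assembling the pieces produces
\[
T(\Omega^\OO)-T(\Omega)\geq \frac{2}{\gamma_n}\int_{s_{\tilde S}}^{u^*(|S|)} \frac{\alpha(\tilde S)^2}{9}|S|\,dt =(u^*(|S|)-s_{\tilde S})\,2|S|\,\frac{\alpha(\tilde S)^2}{9\gamma_n},
\]
which is exactly \eqref{essastorsion}. The main subtlety to double-check is the uniform applicability of Lemma \ref{propasiback} throughout the integration interval: both the inclusion $\tilde S\subseteq\{\tilde u>t\}$ (which hinges on the monotonicity property of $D(s)$) and the smallness of the relative symmetric difference (which hinges on the precise choice of threshold $s_{\tilde S}$) must hold simultaneously. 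Everything else is a direct Fubini-and-integrate adaptation of the argument already executed for the exterior case.
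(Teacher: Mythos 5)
Your proof is correct and follows essentially the same strategy as the paper: start from the pointwise inequality \eqref{finebuchi}, integrate in $r$ over $[0,|G|]$, restrict attention to $t\in[s_{\tilde S},u^\ast(|S|)]$, and propagate asymmetry inward via Lemma \ref{propasiback} using the nestedness of the pseudo-rearrangement sets. The only difference is organizational: you apply Fubini to rewrite the left-hand side as $\frac{2}{\gamma_n}\int_0^{\infty}\alpha^2(\{\tilde u>t\})\,\mu_{\tilde u}(t)\,dt$ and then bound $\mu_{\tilde u}(t)\geq|S|$ directly, whereas the paper keeps the iterated $(r,t)$ integral; your version is arguably tidier, since it makes explicit that the propagation lemma is invoked only for $t$ strictly below $u^\ast(|S|)$, where the inclusion $\tilde S\subseteq\{\tilde u>t\}$ is guaranteed.
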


\begin{proof}
Arguing as in Lemma \ref{4.1}, we obtain
\begin{equation}
\label{lemma5.4}
\frac{2}{\gamma_n}\int_{0}^{|G|}\int_0^{\tilde u^*(r)} \alpha^2(\tilde u>t)\,dt\,dr
\leq T(\Omega^{\mathrm{O}})-T(\Omega).
\end{equation}

We define the set
\begin{equation*}
A_{\tilde S}
:= \left\{ \tau \geq 0 : \mu_{\tilde u}(\tau)\leq |S|\left(1+\frac{\alpha(\tilde S)}{4}\right) \right\}.
\end{equation*}
Then $A_{\tilde S}$ is nonempty, since $\tilde u^*(|S|)\in A_{\tilde S}$. 
Let $s_{\tilde S}:=\inf A_{\tilde S}$ (see \eqref{thresholdAs}). 
By the monotonicity of $\mu_{\tilde u}$, the set $A_{\tilde S}$ is an interval.

Moreover, for every $\tau \in A_{\tilde S}$, we have
\begin{equation*}
\frac{|\tilde S \setminus \{\tilde u>\tau\}|}{|S|}
= \frac{\mu_{\tilde u}(\tau)}{|S|}-1
\leq \frac{\alpha(\tilde S)}{4}.
\end{equation*}
Hence, by Lemma \ref{propasiback}, it follows that
\begin{equation*}
\alpha(\tilde u>\tau)\geq \frac{\alpha(\tilde S)}{3}
\qquad \text{for all } \tau \in A_{\tilde S}.
\end{equation*}

On the other hand, using the right-continuity of the distribution function and the monotonicity of $\tilde u^*$, we have
\begin{equation*}
\left|\{\tilde u^*(r)\geq \tilde u^*(|S|)\}\right|
= \lim_{s\to |S|^-} \left|\{\tilde u^*(r) > \tilde u^*(s)\}\right|
\geq |S|.
\end{equation*}

Therefore, from \eqref{lemma5.4}, by monotonicity of the integral and the definition of $A_{\tilde S}$, we deduce
\begin{equation*}
\begin{aligned}
T(\Omega^{\mathrm{O}})-T(\Omega)
&\geq \frac{2}{\gamma_n}\int_0^{|G|}\int_0^{\tilde u^*(r)} \alpha^2(\tilde u>t)\,dt\,dr \\
&\geq \frac{2}{\gamma_n}\int_0^{|S|}\int_{s_{\tilde S}}^{\tilde u^*(r)} \alpha^2(\tilde u>t)\,dt\,dr \\
&\geq \frac{2}{9\gamma_n}\alpha^2(\tilde S)
\int_0^{|S|} \bigl(\tilde u^*(r)-s_{\tilde S}\bigr)\,dr \\
&\geq \frac{2}{9\gamma_n}\alpha^2(\tilde S)\,|S|\,
\bigl(\tilde u^*(|S|)-s_{\tilde S}\bigr).
\end{aligned}
\end{equation*}
\end{proof}
Again using the techniques contained in \cite{Kim} we are able to estimate the threshold $s_{\tilde S}$ in terms of the asymmetry, proving Lemma \ref{lemma5.2}.
\begin{proof}[Proof of Lemma \ref{lemma5.2}]
    We define $t_1$ such that
    \begin{equation*}
        \mu_{\tilde V}(\tilde V(\abs{S})-2t_1)=\abs{S}\left(1+\frac{1}{4}\alpha(\tilde{S})\right).
    \end{equation*}

        By $\abs{S} \leq \frac{2}{3}\abs{G}$, we have $ \mu_{\tilde V}(\tilde V(\abs{S})-2t_1) \leq \abs{G}$. Therefore,  the following  holds
    \begin{equation*}
        \begin{aligned}
            \frac{\abs{S}\alpha(\tilde{S})}{4}=\abs{S}\left(1+\frac{1}{4}\alpha(\tilde{S})\right)-\abs{S}=\mu_{\tilde V}(\tilde V(\abs{S})-2t_1)-\mu_{\tilde V}(\tilde V(\abs{S}))=\\-\int_{V(\abs{S})-2t_1}^{V(\abs{S})}\mu_{\tilde V}'(s)\,ds=4\pi\int_{V(\abs{S})-2t_1}^{V(\abs{S})}\,ds=8\pi t_1,
        \end{aligned}
    \end{equation*}
 where the last equality follows from symmetry. Indeed, since $\tilde V$ is radial, its superlevel sets are balls. Hence
\begin{equation*}
P(\tilde V>t)^2 = 4\pi\,\mu_{\tilde V}(t).
\end{equation*}
Moreover, for every $t \in (0, V(|S|))$ we have
\begin{equation*}
\begin{aligned}
P(\tilde V>t)^2
&=\left(-\frac{d}{dt}\int_t^\infty P(\tilde V>s)\,ds\right)^2 \\
&=\left(-\frac{d}{dt}\int_{\{\tilde V>t\}}|\nabla V|\,dx\right)^2 \\
&=\left(-\frac{d}{dt}\int_{\{\tilde V>t\}} \,dx\right)
  \left(-\frac{d}{dt}\int_{\{\tilde V>t\}} |\nabla V|^2\,dx\right) \\
&= -\mu_{\tilde V}'(t)\,\mu_{\tilde V}(t),
\end{aligned}
\end{equation*}
where in the last step we used the coarea formula.

Combining the two identities, we have
\[
-\mu_{\tilde V}'(t)\,\mu_{\tilde V}(t) = 4\pi\,\mu_{\tilde V}(t),
\]
and therefore, for every $t$ such that $\mu_{\tilde V}(t)>0$,
\begin{equation}\label{muprimo=4pi}
-\mu_{\tilde V}'(t) = 4\pi.
\end{equation} 
     Also the pointwise comparison gave us a bound on $s_{\tilde{S}}$ as
    \begin{equation}\label{boundss}
        \tilde V(\abs{S})-2t_1=\tilde V\left(\abs{S}\left(1+\frac{1}{4}\alpha(\tilde{S})\right)\right)\geq\tilde u^\ast\left(\abs{S}\left(1+\frac{1}{4}\alpha(\tilde{S})\right)\right)=s_{\tilde{S}}.
    \end{equation}
    Now let us distinguish two cases:
    \begin{itemize}
        \item if $\tilde V(\abs{S})-\tilde u^\ast(\abs{S})<t_1$, then from \eqref{essastorsion} and \eqref{boundss}\, we obtain
        \begin{equation*}
            {\tilde u}^\ast(\abs{S})-s_{\tilde{S}}\geq\tilde u^\ast(\abs{S})-\tilde V(\abs{S})+2t_1\geq -t_1+2t_1=\frac{\abs{S}\alpha(\tilde{S})}{2^5\pi},
         \end{equation*}
        hence, we have

         \begin{equation*}
              T(\Omega^\OO)-T(\Omega)\geq   (u^\ast\left(\abs{S}\right) -s_{\tilde{S}})\abs{S}\frac{2\alpha({\tilde{S}})^2}{9\gamma_n} \geq \frac{\abs{S}^2\alpha(\tilde{S})^3}{3^22^4\pi \gamma_ 2}.
         \end{equation*}

         \item if $\tilde V(\abs{S})-\tilde u^\ast(\abs{S})\geq t_1$, we define the function \begin{equation*}
          f(s)=
         \begin{cases}
         \tilde V (\abs{S}) & \text{ if } s \leq \abs{S}\\
             u^\ast (\abs{S})& \text{ if }  \abs{S} < s \leq \abs{S} + 4\pi \left(\tilde V(\abs{S})-\tilde u^\ast(\abs{S})\right) \\
              \tilde V(s)& \text{ if }  \abs{S} + 4 \pi \left(\tilde V(\abs{S})-\tilde u^\ast(\abs{S})\right)   <s \leq \abs{G},
         \end{cases}\end{equation*}
         
         which is greater or equal to $\tilde u^\ast$ (see figure \ref{fig:funzionef}).
         
         Hence, we have 
         \begin{equation}
             \label{claimbho}
             \begin{aligned}
                 T(\Omega^\OO)-T(\Omega)&=\int _0^{\abs{G}} \left(\tilde V(s)-\tilde u^\ast(s)\right)\, ds \\
                 &\geq \int _0^{\abs{G}} \left(\tilde V(s)-f(s)\right)\, ds= 2 \pi \left(\tilde V(\abs{S})-\tilde u^\ast(\abs{S})\right)^2.
             \end{aligned}
             \end{equation} 

             The latter, after some computations,  gives
              \begin{equation*}
             T(\Omega^\OO)-T(\Omega)\geq 2 \pi \left(\tilde V(\abs{S})-\tilde u^\ast(\abs{S})\right)^2\geq 2 \pi t_1^2\geq \frac{2 \pi}{2^{10} \pi^2}\abs{S}^2\alpha(\tilde{S})^2\geq \frac{\abs{S}^2\alpha(\tilde{S})^3}{2^{10} \pi}.
         \end{equation*}

    \end{itemize}
      \begin{figure}[ht!]
        \centering

\tikzset{every picture/.style={line width=0.75pt}} 

\begin{tikzpicture}[x=0.75pt,y=0.75pt,yscale=-1,xscale=1]

\draw  [dash pattern={on 0.84pt off 2.51pt}]  (292.5,104.7) -- (292.55,266.32) ;
\draw [color={rgb, 255:red, 0; green, 0; blue, 0 }  ,draw opacity=1 ] [dash pattern={on 0.84pt off 2.51pt}]  (179.5,103.78) -- (188.75,103.82) -- (263,104.09) ;
\draw  [draw opacity=0][fill={rgb, 255:red, 65; green, 117; blue, 5 }  ,fill opacity=0.58 ] (292.5,79.8) -- (314,104.7) -- (292.5,104.7) -- cycle ;
\draw  (145,266.4) -- (490.5,266.4)(179.55,15.3) -- (179.55,294.3) (483.5,261.4) -- (490.5,266.4) -- (483.5,271.4) (174.55,22.3) -- (179.55,15.3) -- (184.55,22.3)  ;
\draw [color={rgb, 255:red, 65; green, 117; blue, 5 }  ,draw opacity=1 ]   (179,79.5) -- (292.5,79.8) ;
\draw [color={rgb, 255:red, 65; green, 117; blue, 5 }  ,draw opacity=1 ]   (292.5,79.8) -- (448.5,266.5) ;
\draw [color={rgb, 255:red, 0; green, 0; blue, 0 }  ,draw opacity=1 ]   (179,84) -- (247.5,84.3) ;
\draw [color={rgb, 255:red, 0; green, 0; blue, 0 }  ,draw opacity=1 ]   (247.5,84.3) -- (263,104.09) ;
\draw    (292.5,104.39) -- (311,126.84) ;
\draw [color={rgb, 255:red, 0; green, 0; blue, 0 }  ,draw opacity=1 ]   (311,126.84) -- (329.5,149.28) ;
\draw    (430,244.05) -- (448.5,266.5) ;
\draw [color={rgb, 255:red, 0; green, 0; blue, 0 }  ,draw opacity=1 ]   (341,149.59) -- (419.5,243.75) ;
\draw [color={rgb, 255:red, 0; green, 0; blue, 0 }  ,draw opacity=1 ]   (419.5,243.75) -- (430,244.05) ;
\draw [color={rgb, 255:red, 0; green, 0; blue, 0 }  ,draw opacity=1 ]   (329.5,149.28) -- (341,149.59) ;
\draw [color={rgb, 255:red, 0; green, 0; blue, 0 }  ,draw opacity=1 ]   (263,104.09) -- (292.5,104.39) ;
\draw [dashed,color={rgb, 255:red, 208; green, 2; blue, 27 }  ,draw opacity=1 ]   (179,79.5) -- (292.5,79.8) ;
\draw [dashed,color={rgb, 255:red, 208; green, 2; blue, 27 }  ,draw opacity=1 ]   (292.5,104.39) -- (314,104.7) ;
\draw [dashed,color={rgb, 255:red, 208; green, 2; blue, 27 }  ,draw opacity=1 ]   (314,104.7) -- (448.5,266.5) ;
\draw [dashed,color={rgb, 255:red, 208; green, 2; blue, 27 }  ,draw opacity=1 ]   (292.5,79.8) -- (292.5,104.39) ;
\draw  [dash pattern={on 0.84pt off 2.51pt}]  (314,104.7) -- (314.05,266.32) ;

\draw (440.5,275.15) node [anchor=north west][inner sep=0.75pt]    {$\abs{G}$};
\draw (114.75,66.4) node [anchor=north west][inner sep=0.75pt]    {$\tilde V(\abs{S})$};
\draw (310,76.4) node [anchor=north west][inner sep=0.75pt]    {$\tilde V(s)$};
\draw (117,96.28) node [anchor=north west][inner sep=0.75pt]    {$\tilde u^\ast(\abs{S})$};
\draw (277.4,271.4) node [anchor=north west][inner sep=0.75pt]    {$\abs{S}$};
\draw (314.8,191.2) node [anchor=north west][inner sep=0.75pt]    {$\tilde u^\ast(s)$};
\draw (384.4,163.6) node [anchor=north west][inner sep=0.75pt]    {$f(s)$};
\draw (304.54,272.26) node [anchor=north west][inner sep=0.75pt]    {\scriptsize $\abs{S}+ 4 \pi (\tilde V(\abs{S})- \tilde u^\ast (\abs{S}))$};
\end{tikzpicture}
\caption{Upper bound for $\tilde u^\ast(s)$}
        \label{fig:funzionef}
    \end{figure}
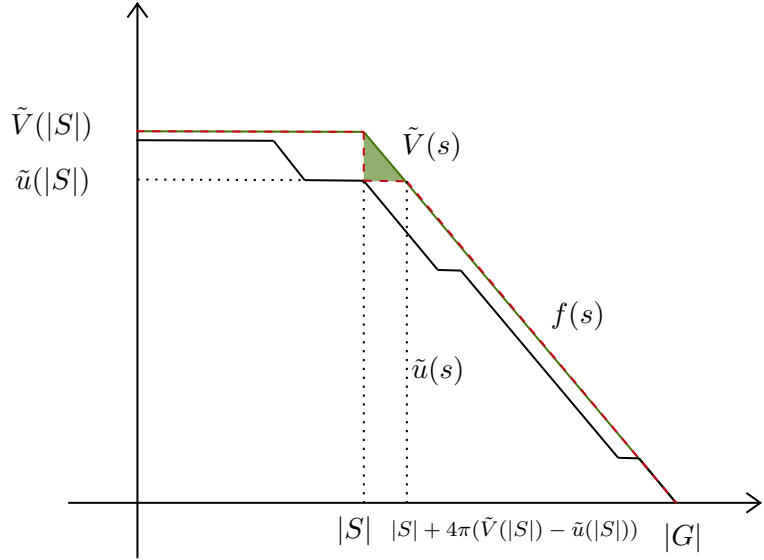
\end{proof}

By combining Lemmas \ref{lemma5.1} and \ref{lemma5.2}, we obtain the desired asymmetry control for holes.

\begin{prop}\label{prop5.5}
	Let $\Omega\subset\R^2$ satisfy Definition \ref{multiplyconnected}. Then 
	\begin{equation*}
	T(\Omega^\OO)-T(\Omega) \geq \frac{\abs{S}^2\alpha^3(S)}{3^22^{10}\pi \gamma_ 2}.
	\end{equation*}
\end{prop}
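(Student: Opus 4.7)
The plan is to combine Lemmas \ref{lemma5.1} and \ref{lemma5.2} through a dichotomy argument based on whether the deficit in passing from $S$ to $\tilde{S}$ is controlled by the mass transfer $\abs{S\setminus \tilde{S}}$ or by the residual asymmetry $\alpha(\tilde{S})$. The starting observation is that $\abs{\tilde{S}}=\abs{S}$ by construction, so $\abs{S\setminus\tilde{S}}=\abs{\tilde{S}\setminus S}$ and therefore $\abs{S\triangle \tilde{S}}=2\abs{S\setminus \tilde{S}}$.

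The key step is an elementary triangle-type estimate for the Fraenkel asymmetry. Let $B$ be an optimal ball in the definition of $\alpha(\tilde{S})$, i.e.\ $\abs{B}=\abs{\tilde{S}}=\abs{S}$ and $\alpha(\tilde{S})=\abs{\tilde{S}\triangle B}/\abs{S}$. Since the same $B$ is admissible for $\alpha(S)$, the triangle inequality for symmetric differences gives
\begin{equation*}
\alpha(S) \;\leq\; \frac{\abs{S\triangle B}}{\abs{S}} \;\leq\; \frac{\abs{S\triangle \tilde{S}}}{\abs{S}}+\frac{\abs{\tilde{S}\triangle B}}{\abs{S}} \;=\; \frac{2\abs{S\setminus\tilde{S}}}{\abs{S}}+\alpha(\tilde{S}).
\end{equation*}
Hence at least one of the two terms on the right is $\geq \alpha(S)/2$, which is the dichotomy I need.

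In the first case, $\alpha(\tilde{S})\geq \alpha(S)/2$, I invoke Lemma \ref{lemma5.2} directly and get $T(\Omega^\OO)-T(\Omega)\geq \abs{S}^2\alpha(S)^3/(3^2\,2^7\pi\gamma_2)$, which is stronger than the claim. In the second case, $\abs{S\setminus\tilde{S}}\geq \abs{S}\alpha(S)/4$, Lemma \ref{lemma5.1} gives $T(\Omega^\OO)-T(\Omega)\geq \abs{S}^2\alpha(S)/(2^4\pi)$; since $\alpha(S)\leq 2$, one has $\alpha(S)\geq \alpha(S)^3/4$, so this yields $T(\Omega^\OO)-T(\Omega)\geq \abs{S}^2\alpha(S)^3/(2^6\pi)$, which is again stronger than the asserted bound because $\gamma_2$ is a (large) universal constant making $3^2\,2^9\gamma_2\geq 2^6$. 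Taking the worse of the two constants yields exactly the announced bound $\abs{S}^2\alpha(S)^3/(3^2\,2^9\pi\gamma_2)$.

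There is no real obstacle here: everything reduces to the triangle inequality for symmetric differences and the two already-proven lemmas. The only mild subtlety is verifying the numerical constants so that both cases fit under the single denominator $3^2\,2^9\pi\gamma_2$; this is straightforward once one uses $\alpha\leq 2$ to pass from $\alpha$ to $\alpha^3$ in the second case.
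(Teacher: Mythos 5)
Your argument is correct, and it uses the same two lemmas and the same triangle inequality on symmetric differences as the paper, but it combines them differently. The paper adds the two lower bounds, then massages the sum algebraically: it first replaces $\frac{\abs{S}}{4\pi}\abs{S\setminus\tilde S}$ by a term of order $\abs{S\setminus\tilde S}^3/\abs{S}$ (using that $\abs{S\setminus\tilde S}\leq\abs{S}$ and $\gamma_2\geq 1$), then applies the power-mean inequality $a^3+b^3\geq\tfrac14(a+b)^3$, and finally inserts the triangle inequality to get $(a+b)^3\gtrsim(\abs{S}\alpha(S))^3$. You instead use the triangle inequality up front to split into two cases (either $\alpha(\tilde S)$ or $\abs{S\setminus\tilde S}/\abs{S}$ is a large fraction of $\alpha(S)$) and then apply only the relevant lemma in each case, passing from $\alpha(S)$ to $\alpha(S)^3$ via the universal bound $\alpha\leq 2$. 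The two routes are of comparable length; the dichotomy is slightly cleaner to audit, and in fact it yields the marginally sharper constant $3^2\,2^7\pi\gamma_2$ (bottleneck is case one, since $\gamma_2\gg 1$) compared with $3^2\,2^9\pi\gamma_2$ in the paper, though both suffice for the statement as written.
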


\begin{proof}
	Let $B$ be the ball which realizes $\alpha(\tilde{ S})$. It is sufficient to observe that 
	$$
	\abs{S}  \alpha(S)\leq \abs{S\triangle B}\leq \abs{\tilde{ S}\triangle B} + \abs{S\triangle \tilde{ S}}	= \abs{\tilde{ S}\triangle B} + 2 \abs{S\setminus \tilde{ S}} \leq 2\left(\abs{\tilde{ S}} \alpha(\tilde{ S})+ \abs{S\setminus \tilde{ S}}\right).$$

    Indeed, by Lemmas \ref{lemma5.1} and \ref{lemma5.2}, and the last property, we have
    \begin{equation*}
        \begin{aligned}
           	T(\Omega^\OO)-T(\Omega) &\geq \frac{\abs{S}}{4\pi}\abs{S\setminus\tilde{S}} +\frac{\abs{S}^2\alpha(\tilde{S})^3}{3^22^4\pi \gamma_ 2} \\
        &\geq \frac{1}{3^22^3\pi \gamma_ 2 \abs{S}}  \left[\abs{S\setminus\tilde{S}}^3 +\abs{S}^3\alpha(\tilde{S})^3\right]\\
         &\geq \frac{1}{3^22^6\pi \gamma_ 2 \abs{S}}  \left[\abs{S\setminus\tilde{S}}+\abs{S}\alpha(\tilde{S})\right]^3\\
         &\geq \frac{1}{3^22^{9}\pi \gamma_ 2 \abs{S}}  \left[\abs{S}\alpha(S)\right]^3.
        \end{aligned}
    \end{equation*}
	\end{proof}

Theorem \ref{thm:dueasimmetrie} follows by combining Propositions \ref{lemma4.2} and \ref{prop5.5}.
\section{Almost radiality of the solution and conclusion}\label{section6}

This section will focus on the almost radiality of the solution.
The idea is similar to the one exploited in \cite{ABCMP} and \cite{ABMP}, with some appropriate adaptations.

As a first step, we need to prove that the  following function defined on $G$ 
\begin{equation}
\label{doppiav}
    w(x):= \min\{ \tilde u(x), \, \tilde u^\ast(\abs{S})\}
\end{equation}
has the Polya-Szego deficit controlled via the torsion deficit $T(\Omega^\OO) -T(\Omega)$. Also here, we recall that the tilde notation is used to denote the extension of a function by constant values on the holes of the domain. We observe that for $w(x)$ the following properties hold:
\begin{enumerate}
    \item $\Big\lvert \{w\geq \tilde u^\ast(\abs{S})\}\Big\rvert\geq \abs{S}$;
    \item $\displaystyle{\int_G \abs{\nabla w}^2 \, dx \leq  \int_G \abs{\nabla u}^2 \, dx }$;
    \item $\displaystyle{w(x)\leq \tilde u(x) \leq w(x) + \chi_{D_{\tilde u}(\abs{S})}\left(\tilde V(\abs{S})-\tilde u^\ast(\abs{S})\right)}$.
\end{enumerate}
In particular, the upper bound in the third property follows from the fact that $w$ and $\tilde u$ differ only on $D_{\tilde u}(|S|)$. Moreover, by Corollary~\ref{puntualedim2}, we have $\tilde u^\ast(|S|) \leq \tilde V^\ast(|S|)$. In view of \eqref{puntuale}, the additional term is nonnegative and vanishes if and only if the minimum in \eqref{doppiav} is attained at \(u(x)\); in this case, equality holds. We, lastly, stress that 
\begin{equation}
\label{distrw}
     \mu_{w}(s) = \mu_{\tilde u}(s) \quad\text{ for } \quad s < \tilde u^\ast (\abs{S}).
\end{equation}
Hence we have

\begin{lemma}
\label{lem_grad}
     Let $\Omega\subset\R^2$ satisfy Definition \ref{multiplyconnected}. Let $u$ and $V$ be the torsion function of $\Omega$ and $\Omega^\OO$ respectively. Let $w$be the function defined in \eqref{doppiav}.
     
     Then 
\begin{equation}
\label{tops}
\int_{\Omega}\abs{\nabla w}^2-\int_{\Omega^\sharp}|\nabla w^\sharp|^2\leq 2\left(T(\Omega^\OO) - T(\Omega)\right).
\end{equation}
\end{lemma}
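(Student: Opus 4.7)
The plan is to combine three ingredients: the admissibility of $w^\sharp$ as a test function in the Rayleigh quotient defining $T(\Omega^\OO)$, the gradient control from property (2), and the quadratic estimate
\[
T(\Omega^\OO) - T(\Omega) \,\geq\, 2\pi \bigl( \tilde V(|S|) - \tilde u^\ast(|S|) \bigr)^2
\]
that is proved en route to Lemma~\ref{lemma5.2}, namely \eqref{claimbho}. This last estimate is precisely what produces the square root on the right-hand side of the target inequality.

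First I would verify the admissibility. Set $c := \tilde u^\ast(|S|)$ and $a := \tilde V(|S|) - c \geq 0$. Property~(1) reads $|\{w \geq c\}| \geq |S|$, and since $w \leq c$ pointwise by construction, this forces $w^\sharp \equiv c$ on the centered ball $S^\sharp$ of measure $|S|$. Being radial, $w^\sharp$ is thus constant on the inner hole of $\Omega^\OO$ and lies in $H^1_0(G^\sharp)$, so plugging it into \eqref{torsione} and using equidistribution of $w$ and $w^\sharp$ gives
\[
\int_{G^\sharp} |\nabla w^\sharp|^2 \,\geq\, \frac{ \bigl( \int_G w \bigr)^2 }{T(\Omega^\OO)}.
\]
Next, property~(3) yields $0 \leq u - w \leq \chi_{D(|S|)}\, a$, hence $\int_G w \geq T(\Omega) - |S|\, a$.

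Writing $T_1 := T(\Omega)$ and $T_2 := T(\Omega^\OO)$ and combining the last display with property~(2),
\[
\int_\Omega |\nabla w|^2 - \int_{\Omega^\sharp} |\nabla w^\sharp|^2 \,\leq\, T_1 - \frac{(T_1 - |S|a)^2}{T_2} \,\leq\, (T_2 - T_1) + 2|S|\, a,
\]
after expanding the square, discarding the non-positive $-|S|^2 a^2/T_2$ term and using $T_1 \leq T_2$ (the degenerate regime $|S|a > T_1$ is handled separately by the crude bound $T_1 \leq |S|a$ combined with the quadratic estimate below). Finally, \eqref{claimbho} gives $a \leq \sqrt{(T_2 - T_1)/(2\pi)}$, and the Saint-Venant bound $T_2 \leq T(G^\sharp) = |G|^2/(8\pi)$ yields $T_2 - T_1 \leq \sqrt{T_2}\sqrt{T_2 - T_1} \leq \tfrac{|G|}{2\sqrt{2\pi}}\sqrt{T_2 - T_1}$. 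Summing the two contributions produces a coefficient $\tfrac{|G|}{2\sqrt{2\pi}} + \tfrac{2|G|}{\sqrt{2\pi}} = \tfrac{5|G|}{2\sqrt{2\pi}}$, comfortably within the claimed $\tfrac{6|G|}{\sqrt{2\pi}}$.

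The main conceptual obstacle is to identify $w^\sharp$ itself, rather than $\tilde u^\sharp$ or $V$, as the right admissible competitor in the annulus's Rayleigh quotient: the truncation at height $c$ is engineered precisely so that the plateau of $w^\sharp$ lies exactly over $S^\sharp$, which is what makes it a legal test function. After that observation, the proof becomes algebra, one application of the quadratic estimate \eqref{claimbho}, and one crude Saint-Venant bound.
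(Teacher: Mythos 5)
Your proof is correct and follows essentially the same strategy as the paper's: in both arguments the heart of the matter is that the truncation at height $\tilde u^\ast(\abs{S})$ makes $w^\sharp$ an admissible competitor in the variational characterization of $T(\Omega^\OO)$, after which properties (1)--(3), the quadratic estimate \eqref{claimbho} and a crude bound of $T(\Omega^\OO)$ in terms of $\abs{G}$ yield the square-root bound. The only deviation is that you work with the Rayleigh-quotient (maximum) form plus an algebraic expansion, whereas the paper uses the energy (minimum) form and compares with $\tilde V$ and $u^\sharp$; this is a dual reformulation rather than a different route, and your resulting constant $\tfrac{5\abs{G}}{2\sqrt{2\pi}}$ sits comfortably below the stated $\tfrac{6\abs{G}}{\sqrt{2\pi}}$.
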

\begin{proof}
Notice that the extended torsion function $\tilde V$  is the unique minimizer of
\begin{equation*}
    -T(\Omega^\OO)= \min\left\{\int_{G^\sharp}\abs{\nabla\varphi}^2\,dx - 2\int_{G^\sharp} \varphi\, dx  : \, \varphi \in H^1_0(G^\sharp), \varphi\equiv  \, \text{constant on }  S^\sharp \right\}.
\end{equation*}
Hence we have  by P\'olya-Szeg\"o inequality and equimisurability
\begin{equation*}
\int_G\,|\nabla u|^2-2\int_{G} u\geq \int_{G^{\sharp}}\,|\nabla u^\sharp|^2-2\int_{G^\sharp} u^\sharp\geq \int_{G^{\sharp}}\abs{\nabla \tilde V}^2-2\int_{G^\sharp}\tilde V.
\end{equation*}
 Therefore, combining the previous steps with the properties of $w$ recalled at the beginning of this section, we infer
\begin{equation*}
    \begin{aligned}
\int_{G}\abs{\nabla w}^2-\int_{G^\sharp}|\nabla w^\sharp|^2 
&\leq\int_{G}\abs{\nabla w}^2-\int_{G^\sharp}|\nabla w^\sharp|^2+\int_{\tilde u>\tilde u^\ast(\abs{S})}\abs{\nabla u}^2\,dx-\int_{\tilde u^\sharp>\tilde u^\ast(\abs{S})}\abs{\nabla u^\sharp}^2\,dx
\\
&\leq \int_{G}\abs{\nabla\tilde u}^2-\int_{G^\sharp}|\nabla\tilde u^\sharp|^2 \\ 
&\leq \int_{G^\sharp}\abs{\nabla\tilde V}^2-\int_{G^\sharp}|\nabla\tilde u^\sharp|^2\\
&\leq 2\int_{G^\sharp}(\tilde V-u^\sharp) \, dx\\
&= 2 \left( T(\Omega^\OO) - T(\Omega)\right) .
\end{aligned}
\end{equation*}
Here, the first inequality is justified since, by the P\'olya--Szeg\"o inequality applied on $\{\tilde u>\tilde u^\ast(\abs{S})\}$, the additional term is nonnegative.
\end{proof}

In particular, \eqref{tops} suggests that the P\'olya-Szeg\H o inequality for $w$ holds true almost as an equality when $T(\Omega^\OO) - T(\Omega)$ is small enough. So it is natural to consider the P\'olya-Szeg\H o quantitative inequality, recalled in Theorem \ref{polya_quant}.

To show that $w$ and $w^\sharp$ are close, it suffices to prove that, for every $\delta>0$, the quantity $M_{w^\sharp}(\delta)$, defined in \eqref{eumu}, can be bounded above by a power of $\delta$. This estimate does not hold in general for arbitrary Sobolev functions; however, it is valid for $w$ because $w$ is a truncation of the torsion function of $\Omega$.

The main idea to bound from above the quantity $M_{w^{\sharp}}(\delta)$ is to write the set $A:=\{\nabla w^\sharp < \delta\} \cap \{0 <w^\sharp < {\tilde u}^\ast(\abs{S})\}$ as the union of suitable subsets, and to bound from above the measure of each of them. In order to do that, in Lemma \ref{lemmaI}
 we define a set $I\subseteq [0,{\tilde u}^\ast(\abs{S})]$ and, in \eqref{t_epsilon}, we define a positive number $t_{\varepsilon}\in{[0, {\tilde u}^\ast(\abs{S})]}$, so that we have

\begin{align*}
   A=&\Big( A\cap w^{\sharp-1}(I^c\cap (0,t_{\varepsilon})\Big)\\
    &\cup\Big(A\cap w^{\sharp-1}(I\cap (0,t_{\varepsilon})\Big)\\
    &\cup\Big(A\cap w^{\sharp-1}(t_{\varepsilon}, +\infty)\Big).
\end{align*}
We will prove next lemmas and propositions under the additional assumptions
$$ \abs{G}=1, \quad T(\Omega^\OO) - T(\Omega)=:\varepsilon \le \varepsilon_0, $$ where $\varepsilon_0$ will be suitably chosen later.

\begin{lemma} \label{lemmaI}
Let $\varepsilon:=T(\Omega^\OO) - T(\Omega)$ and let us define 
the set $I$ as follows
   \begin{equation}
\label{I}
I=\left\{t \in [0, \tilde u^\ast (\abs{S})] :\,\int_{\tilde V = t}\abs{\nabla \tilde V}-\int_{w^\sharp=t}\abs{\nabla w^\sharp}>\varepsilon^{\frac{1}{4}}\right\}.
\end{equation}
    Then,    
    \begin{equation}\label{I_bound}
        \abs{I}\le \frac{6\abs{G}}{\sqrt{2\pi}}  \varepsilon^{\frac{1}{4}},
    \end{equation}
and, for every  $t\in I^c$,  it holds
\begin{equation}
\label{gradient}
\int_{\tilde V = t}\abs{\nabla \tilde V}\;-\int_{w^\sharp=t}|\nabla w^\sharp|\leq \varepsilon^{\frac14}.
\end{equation}
    
\end{lemma}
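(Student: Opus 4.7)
The claim \eqref{gradient} is immediate from the definition: for any $t \in [0, \tilde u(\abs{S})] \setminus I$, the negation of the strict defining inequality is exactly the content of \eqref{gradient}.

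For \eqref{I_bound}, my plan is to combine Chebyshev's inequality with the coarea formula, the torsion identity $\int w\,dx = \int\abs{\nabla w}^2\,dx$, and Lemma \ref{lem_grad}. Chebyshev gives
\begin{equation*}
\varepsilon^\alpha \abs{I} \;\leq\; \int_I f(t)\,dt, \qquad f(t) := \int_{\{\tilde V = t\}}\abs{\nabla \tilde V}\,d\mathcal{H}^1 - \int_{\{w^\sharp = t\}}\abs{\nabla w^\sharp}\,d\mathcal{H}^1.
\end{equation*}
I would then verify that $f \geq 0$ a.e.\ on $[0, \tilde u^*(\abs{S})]$, which is the step that lets me enlarge the domain of integration. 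The key identities, valid in $n=2$, are: (i) integrating $-\Delta \tilde V = 1$ over $\{\tilde V > t\}$ and invoking the flux condition in \eqref{torsionprob} gives $\int_{\{\tilde V = t\}}\abs{\nabla \tilde V}\,d\mathcal{H}^1 = \mu_{\tilde V}(t)$, and the same computation for $u$ produces $\int_{\{u=t\}}\abs{\nabla u}\,d\mathcal{H}^1 = \mu_u(t)$; (ii) equality in the Cauchy--Schwarz chain of the coarea formula applied to the radial function $u^\sharp$, combined with the $2$D isoperimetric inequality, yields $\int_{\{u^\sharp=t\}}\abs{\nabla u^\sharp}\,d\mathcal{H}^1 = \frac{4\pi \mu_u(t)}{-\mu_u'(t)} \leq \mu_u(t)$; and (iii) the pointwise comparison $u^\sharp \leq \tilde V$ from Corollary \ref{puntualedim2} gives $\mu_u \leq \mu_{\tilde V}$. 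Since $w^\sharp$ and $u^\sharp$ coincide on the range $t < \tilde u^*(\abs{S})$, combining (i)--(iii) yields $f(t) \geq \mu_{\tilde V}(t) - \mu_u(t) \geq 0$.

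With positivity in hand I may enlarge the integration set and, by coarea plus layer-cake,
\begin{equation*}
\int_I f\,dt \;\leq\; \int_0^{\tilde u^*(\abs{S})} f\,dt \;=\; \int_G \min\{\tilde V, \tilde u^*(\abs{S})\}\,dx \;-\; \int_{G^\sharp}\abs{\nabla w^\sharp}^2\,dx.
\end{equation*}
Testing $-\Delta u = 1$ against $w$ and using the flux condition on each hole gives the torsion-type identity $\int_G w\,dx = \int_G \abs{\nabla w}^2\,dx$, so the right-hand side splits as $(\int \min\{\tilde V, \tilde u^*(\abs{S})\} - \int w) + (\int \abs{\nabla w}^2 - \int \abs{\nabla w^\sharp}^2)$. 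Layer-cake bounds the first difference by $\int_0^{\tilde u^*(\abs{S})}(\mu_{\tilde V}-\mu_u)\,dt \leq \int_0^\infty(\mu_{\tilde V}-\mu_u)\,dt = T(\Omega^\OO) - T(\Omega) = \varepsilon$, while Lemma \ref{lem_grad} controls the second by $\frac{6\abs{G}}{\sqrt{2\pi}}\varepsilon^{1/2}$. Dividing by $\varepsilon^\alpha$ and absorbing the subleading $\varepsilon$ term via the smallness assumption $\varepsilon \leq \varepsilon_0$ yields \eqref{I_bound}.

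The most delicate step is the positivity of $f$ on the whole range $[0,\tilde u^*(\abs{S})]$: it genuinely uses that we are in dimension $n=2$, through the sharp isoperimetric constant $4\pi$ that enters the Cauchy--Schwarz/isoperimetric chain and forces $-\mu_u'(t) \geq 4\pi$. The remaining estimates are bookkeeping via the layer-cake formula and the torsion identity for the truncated function $w$, both of which follow transparently from the Euler--Lagrange equation \eqref{torsionprob}.
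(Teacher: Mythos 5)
Your proof is correct in substance and shares the skeleton of the paper's argument --- Chebyshev on $I$, enlargement of the domain of integration, coarea, and the gradient estimate of Lemma \ref{lem_grad} --- but the final bookkeeping is genuinely different. The paper simply enlarges $\int_I$ to $\int_0^{+\infty}$, applies the coarea formula to both $\tilde V$ and $w^\sharp$ so that the right-hand side becomes $\int_{\Omega^\sharp}\abs{\nabla \tilde V}^2-\int_{\Omega^\sharp}\abs{\nabla w^\sharp}^2$, and bounds this quantity by $\frac{6\abs{G}}{\sqrt{2\pi}}\varepsilon^{1/2}$ via the chain established inside the proof of Lemma \ref{lem_grad}. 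You instead stop the enlargement at the level $\tilde u^\ast(\abs{S})$, convert the $\tilde V$-part into $\int\min\{\tilde V,\tilde u^\ast(\abs{S})\}$ through the flux identity and the layer-cake formula, and then split, via the torsion identity $\int w=\int\abs{\nabla w}^2$, into a term bounded by $\varepsilon$ (using $\mu_{\tilde u}\le\mu_{\tilde V}$ from Corollary \ref{puntualedim2}) plus the P\'olya--Szeg\H{o} deficit of $w$, controlled by Lemma \ref{lem_grad} as stated. Both routes work; yours uses \eqref{tops} as a black box and, more importantly, makes explicit the verification that the integrand $f(t)\ge 0$ on $I^c$ below $\tilde u^\ast(\abs{S})$ --- a point the paper leaves implicit although it is needed to justify its own enlargement to $\int_0^{+\infty}$. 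The price is the extra additive $\varepsilon$: after absorbing it (using $\varepsilon\le\varepsilon_0$ or $\varepsilon\le T(\Omega^\OO)$) you recover the rate $\varepsilon^{\frac12-\alpha}$ but with a marginally larger constant than the stated $\frac{6\abs{G}}{\sqrt{2\pi}}$, which is harmless since only the exponent is used later in Proposition \ref{Musharp}.

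Two small technical remarks on your positivity step. The identity $\int_{\{u^\sharp=t\}}\abs{\nabla u^\sharp}=\frac{4\pi\mu_u(t)}{-\mu_u'(t)}$ presupposes that $\mu_u$ is absolutely continuous; without that assumption, \eqref{brothers1} only yields an inequality in the unhelpful direction. A cleaner route to the same conclusion is to observe that your chain (flux identity plus the planar isoperimetric inequality) gives $-\mu_{\tilde u}'(t)\ge 4\pi$ a.e., hence $\tilde u^\ast$ is Lipschitz with constant $\frac{1}{4\pi}$, so that for a.e.\ $t$ one has $\int_{\{u^\sharp=t\}}\abs{\nabla u^\sharp}=4\pi\,\mu_{\tilde u}(t)\,\abs{(\tilde u^\ast)'(\mu_{\tilde u}(t))}\le\mu_{\tilde u}(t)$, the levels where $\nabla u^\sharp$ vanishes being trivial. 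Combined with $\mu_{\tilde u}\le\mu_{\tilde V}$ and the flux identity for $\tilde V$, this yields exactly the nonnegativity of $f$ you need, and it is the same fact that tacitly underlies the paper's enlargement step.
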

\begin{proof}
Claim \eqref{gradient}  is a direct consequence of the definition of the set $I$ in \eqref{I}. Moreover, using  the Coarea formula \eqref{coarea}, we have
\begin{align*}
    \varepsilon^{\frac14}\abs{I}\leq \int_{I}\biggl(\int_{\tilde V = t}\abs{\nabla \tilde V}-\int_{\{w^\sharp=t\}}|\nabla w^\sharp|\biggr)&\leq \int_0^{+\infty}\biggl(\int_{\tilde V = t}\abs{\nabla \tilde V}-\int_{w^\sharp=t}|\nabla w^\sharp|\biggr)\\&=\int_{G^\sharp}\abs{\nabla \tilde V}^2-\int_{G^\sharp}|\nabla w^\sharp|^2<\frac{6\abs{G}}{\sqrt{2\pi}} \varepsilon^\frac{1}{2}, 
    \end{align*}
and claim \eqref{I_bound} follows.
\end{proof}
\begin{lemma}
   Let $\delta>0$ and let  $\tilde V$ be the extension of the solution to \eqref{probsimm}. Then, there exists a positive constant $K$ such that
    \begin{equation*}
        \abs{\{\abs{\nabla \tilde V}\leq\delta\}\cap (G^\sharp\setminus S^\sharp)}\leq K \delta^2.
    \end{equation*}
\end{lemma}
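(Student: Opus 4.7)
The approach is to exploit the explicit form of $\tilde V$ on the annular region. Since the symmetrized problem \eqref{probsimm} is radial and, in the classical Laplace setting $\nu\equiv 1$ used in this section, admits the closed-form solution given by \eqref{defV}, one can write down $|\nabla \tilde V|$ directly and read off how large the set $\{|\nabla \tilde V|\leq \delta\}\cap(G^\sharp\setminus S^\sharp)$ can be.

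First, I would substitute $n=2$ and $\nu^\ast\equiv 1$ into \eqref{defV} to obtain the simple expression
\begin{equation*}
    \tilde V(x)=\frac{1}{4\pi}\bigl(\abs{G}-\pi\abs{x}^2\bigr), \qquad x\in G^\sharp\setminus S^\sharp,
\end{equation*}
and then compute directly $|\nabla \tilde V(x)|=|x|/2$ on the annular region. The extension of $V$ to $S^\sharp$ by its boundary value plays no role here, since the intersection with $G^\sharp\setminus S^\sharp$ removes that piece.

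With this formula in hand, the inequality $|\nabla \tilde V(x)|\leq \delta$ on the annulus is equivalent to $|x|\leq 2\delta$, so the sub-level set is contained in $B_{2\delta}$ and its Lebesgue measure is at most $4\pi\delta^2$. This yields the claim with the universal constant $K=4\pi$.

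I do not expect any obstacle: the whole argument reduces to a one-line computation with a radial quadratic. The only point worth flagging is the regime $2\delta\geq R$, where $\pi R^2=\abs{G}$, in which case the sub-level set may exhaust the whole annular region; but then $4\pi\delta^2\geq \pi R^2=\abs{G}\geq \abs{G^\sharp\setminus S^\sharp}$, so the bound continues to hold trivially.
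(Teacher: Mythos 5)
Your proof is correct and follows essentially the same route as the paper: compute $\tilde V$ explicitly from \eqref{defV} with $n=2$, $\nu^\ast\equiv 1$, take the gradient, and observe that the sublevel set $\{\abs{\nabla\tilde V}\le\delta\}$ restricted to the annulus is a ball of radius proportional to $\delta$, giving the quadratic bound. The paper records $\abs{\nabla\tilde V}(x)=\abs{x}$ rather than your $\abs{x}/2$ (a harmless factor-of-two discrepancy in the constant), but the argument and the order $\delta^2$ are identical.
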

\begin{proof}
    By direct computation, from \eqref{probsimm}, we have that $\abs{\nabla \tilde V}(x)= \abs{x}$ if $\pi\abs{x}^2 \geq \abs{S}$, otherwise it is zero.  Hence,
    \begin{itemize}
        \item if $\pi\delta^2 < \abs{S}$ we have 
          \begin{equation*}
        \{\abs{\nabla \tilde V}\leq\delta\}=S^\sharp;
    \end{equation*}
      \item if $\pi\delta^2 \geq \abs{S}$ we have 
      \begin{equation*}
        \{\abs{\nabla \tilde V}\leq\delta\} = \{\abs{x}\leq\delta\}.
    \end{equation*}

    \end{itemize}
    In any case, 
      $$\abs{\{\abs{\nabla \tilde V}\leq\delta\}\cap G^\sharp\setminus S^\sharp}\leq K \delta^2 - \abs{S}\leq K \delta^2 .$$
\end{proof}
\begin{lemma} Let $w$ be the function defined in \eqref{doppiav}.
 If we define the following quantity
\begin{equation}
    \label{t_epsilon}
    t_{\varepsilon}=\sup\left\{t>0\;:\;\;\mu_w(t)>\varepsilon^{\frac{1}{4}}\right\},
\end{equation}
then, for every $\delta>0$, it holds     \begin{equation*}
        \abs{\{ A \cap w^{\sharp-1}(I^c\cap (0,t_{\varepsilon}) }\leq\pi\left(\delta+\frac{\varepsilon^{\frac18}}{2\sqrt{\pi}}\right)^2.
    \end{equation*}
\end{lemma}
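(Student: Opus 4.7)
The idea is to translate the level-set gradient comparison that defines $I^c$ into a bound on the radius at which $w^\sharp$ attains each admissible level, and then combine this with the radius lower bound forced by $t<t_{\varepsilon,\beta}$ to pin down the radial support of the set.

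Since $w^\sharp$ is radial and nonincreasing and $\tilde V$ has the explicit form $\tilde V(x)=\frac{1}{4}(R_G^2-\abs{x}^2)$ on the annular region (with $\abs{\nabla\tilde V}(x)=\abs{x}/2$), direct computation gives
\[\int_{\{\tilde V=t\}}\abs{\nabla\tilde V}\,d\mathcal{H}^1=\pi\rho(t)^2,\qquad \int_{\{w^\sharp=t\}}\abs{\nabla w^\sharp}\,d\mathcal{H}^1=2\pi r(t)g(t),\]
where $r(t)$ and $\rho(t)$ are defined by $\pi r(t)^2=\mu_w(t)$ and $\pi\rho(t)^2=\mu_{\tilde V}(t)$, and $g(t)$ denotes the (constant) value of $\abs{\nabla w^\sharp}$ on the sphere $\{\abs{x}=r(t)\}$. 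For $t\in I^c$ inequality \eqref{gradient} therefore reads $\pi\rho(t)^2-2\pi r(t)g(t)\le\varepsilon^\alpha$.

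Now take an arbitrary $x\in A\cap w^{\sharp-1}(I^c\cap(0,t_{\varepsilon,\beta}))$ and set $t=w^\sharp(x)$. The assumption $x\in A$ forces $g(t)<\delta$, while Corollary \ref{puntualedim2}, together with the pointwise inequality $w\le u\le\tilde u$ and the monotonicity of the Schwartz rearrangement, yields $w^\sharp\le \tilde u^\sharp\le\tilde V$, so $\mu_w(t)\le\mu_{\tilde V}(t)$ and hence $r(t)\le\rho(t)$. Combining these with the level-set inequality produces
\[\pi r(t)^2\le\pi\rho(t)^2\le 2\pi r(t)\delta+\varepsilon^\alpha.\]
The condition $t<t_{\varepsilon,\beta}$ forces $\mu_w(t)>\varepsilon^{2\beta}$, i.e.\ $r(t)>\varepsilon^\beta/\sqrt{\pi}$. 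Dividing the previous display by $\pi r(t)$ and using this lower bound gives
\[r(t)\le 2\delta+\frac{\varepsilon^\alpha}{\pi r(t)}\le 2\delta+\frac{\varepsilon^{\alpha-\beta}}{\sqrt{\pi}}.\]
Since the set in question is radial and every one of its points satisfies $\abs{x}=r(w^\sharp(x))$ for some admissible $t$, it is contained in a disk of radius comparable to $\delta+\varepsilon^{\alpha-\beta}/(2\sqrt{\pi})$, from which the measure bound follows.

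The main obstacle is the simultaneous use of both the upper and lower bounds on $r(t)$: the upper bound comes from turning the level-set gradient deficit on $I^c$ into a quadratic inequality via the explicit $\tilde V$, while the lower bound comes from the definition of $t_{\varepsilon,\beta}$, and only their combination converts the quadratic estimate into a usable linear bound on $r(t)$. A secondary delicate point is the verification that $w^\sharp\le\tilde V$ (rather than just $\tilde u^\sharp\le\tilde V$), which is needed to conclude $r(t)\le\rho(t)$ and which follows from the fact that truncation at $\tilde u^\ast(\abs{S})$ decreases the function and therefore also its rearrangement.
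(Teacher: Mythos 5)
Your argument is correct and follows essentially the paper's route: both convert the gradient deficit \eqref{gradient} on $I^c$ into an explicit inequality between the radii of the level circles of $\tilde V$ and $w^\sharp$, invoke the pointwise comparison $w^\sharp\le\tilde V$ to obtain $r(t)\le\rho(t)$, and use the lower bound $\mu_w(t)>\varepsilon^{2\beta}$ forced by $t<t_{\varepsilon,\beta}$ to control the remainder. The only discrepancy is numerical: your (correct) identification $\abs{\nabla\tilde V}(x)=\abs{x}/2$, obtained from \eqref{defV}, yields $r(t)\le 2\delta+\varepsilon^{\alpha-\beta}/\sqrt{\pi}$ and hence a measure bound of $4\pi\bigl(\delta+\tfrac{\varepsilon^{\alpha-\beta}}{2\sqrt{\pi}}\bigr)^2$, four times the constant stated; the paper reaches the smaller constant only because it writes $\abs{\nabla\tilde V}=\abs{x}$, which is a factor-of-two slip, and in any case the precise constant is immaterial to the downstream estimate in Proposition \ref{Musharp}.
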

\begin{proof}
    By direct computation, property \eqref{gradient} in Lemma \ref{lemmaI} is equivalent to
    \begin{equation}\label{Icomplementare}
        P(\tilde V = t)\abs{\nabla \tilde V}(y)-P(w^\sharp=t)\abs{\nabla w^\sharp}(x)\leq\varepsilon^\frac14,
    \end{equation}
    for every $y\in\{\tilde V = t\}$ and $x\in\{w^\sharp=t\}$. If we fix $x\in\{w^\sharp=t\}$ and we consider $y=\ell(t)x,$ where $\displaystyle{\ell(t)=\left(\frac{\mu_{\tilde V}(t)}{\mu_{\tilde u}(t)}\right)^\frac{1}{2}}$, we have that $y\in\{\tilde V=t\}$. Thus by pointwise comparison, the monotonicity of the distribution function and the properties of $w$ we have that
    \begin{equation*}
        P(\tilde V = t)\geq P(w^\sharp=t)=2(\pi\mu_w(t))^{\frac 1 2}\geq2\sqrt{\pi}\varepsilon^{\frac18 },
    \end{equation*}
    which implies in \eqref{Icomplementare} that
    \begin{equation}\label{diffgrad}
        \abs{\nabla \tilde V}(\ell(t)x)-\abs{\nabla w^\sharp}(x)\leq\frac{\varepsilon^\frac14}{2\sqrt{\pi}\varepsilon^{\frac18}}=\frac{\varepsilon^{\frac18}}{2\sqrt{\pi}} .
    \end{equation}
    Moreover, combining \eqref{diffgrad}, the Definition \eqref{defV} and the fact that $\ell(t)\geq1$ we have
    \begin{equation*}
        \abs{x}-\abs{\nabla w^\sharp}(x)\leq\abs{\nabla \tilde V}(\ell(t)x)-\abs{\nabla w^\sharp}(x)\leq\frac{\varepsilon^{\frac18}}{2\sqrt{\pi}}, \quad\forall x\in w^{\sharp-1}(I^c \cap (0, t_{\varepsilon})).
    \end{equation*}
    From the above inequality, we deduce that
    \begin{equation*}
        A\cap w^{\sharp-1}(I^c\cap (0, t_{\varepsilon}))\subseteq\left\{\abs{x}\leq\delta+\frac{\varepsilon^{\frac18}}{2\sqrt{\pi}}\right\},
    \end{equation*}
hence the thesis follows.
\end{proof}

\begin{oss}
We observe that 
\begin{equation*}
\abs{w^{\sharp-1}(t_{\varepsilon}, +\infty)}\le\varepsilon^{\frac14},
\end{equation*}
indeed
    $$\mu_w(t_{\varepsilon,})=\lim_{t\rightarrow t_{\varepsilon}^+}\mu_w(t)\leq \varepsilon^{\frac14},$$
    since $\mu_w$ is right continuous.
\end{oss}

\begin{lemma}
    Let $w$ be the function defined in \eqref{doppiav}.Then,  we have 

\begin{equation*}
     \abs{w^{\sharp-1}(I) \cap \{0 <w^\sharp < {\tilde u}^\ast(\abs{S})\}} \leq  K \varepsilon^{\frac16}. 
\end{equation*}
\end{lemma}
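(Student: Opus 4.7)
The approach mimics the threshold-splitting strategy of the preceding lemma, with the exponent $q$ playing the role of $\beta$ and the set $I$ replacing $I^c$.

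As a first step, I would introduce the analogue of $t_{\varepsilon,\beta}$ by setting
$$\tau_q := \sup\{t > 0 : \mu_w(t) > \varepsilon^{2q}\}.$$
By right-continuity of the distribution function, $\mu_w(\tau_q) \leq \varepsilon^{2q}$, while for every $t<\tau_q$ one has $\mu_w(t)>\varepsilon^{2q}$ and, consequently, the isoperimetric identity gives $P(\{w^\sharp>t\}) = 2\sqrt{\pi\mu_w(t)} \geq 2\sqrt{\pi}\,\varepsilon^q$.

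Next, I would decompose
$$w^{\sharp-1}(I)\cap\{0<w^\sharp<\tilde u^\ast(\abs{S})\} \subseteq \{w^\sharp \geq \tau_q\} \cup w^{\sharp-1}\bigl(I\cap(0,\tau_q)\bigr).$$
The first set is contained in $\{w^\sharp \geq \tau_q\}$, whose measure is at most $\mu_w(\tau_q^-) \leq K\varepsilon^{2q}$ (with $K=1$ under absolute continuity of $\mu_w$ on $(0,\tilde u^\ast(\abs{S}))$, and $K$ an absolute constant in general to absorb a possible plateau of $w^\sharp$), producing the first term of the bound.

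For the second set I would apply the radial coarea formula on the disk $G^\sharp$, which yields
$$\abs{w^{\sharp-1}(I\cap(0,\tau_q))} = \int_{I\cap(0,\tau_q)} \frac{P(\{w^\sharp>t\})}{|\nabla w^\sharp|(t)}\,dt.$$
Bounding $P(\{w^\sharp>t\}) \leq P(G^\sharp) = 2\sqrt{\pi}$ in the numerator and using the lower bound $P\geq 2\sqrt{\pi}\,\varepsilon^q$ from the first step to reorganize the integrand, one can couple the resulting expression with the size estimate $\abs{I} \leq \frac{6\abs{G}}{\sqrt{2\pi}}\,\varepsilon^{1/2-\alpha}$ from Lemma \ref{lemmaI} (along the lines of the preceding lemma's computation) to obtain the contribution $2\sqrt{\pi}\,\varepsilon^{1/2-\alpha-q}$.

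The hard part will be the last step: a naive pointwise lower bound on $|\nabla w^\sharp|$ on $I$ is not available, since by the very definition of $I$ in \eqref{I} the gradient of $w^\sharp$ is precisely where the P\'olya--Szeg\H o loss concentrates and may be small. The estimate must therefore couple the smallness of $\abs{I}$ with the perimeter lower bound in an integral, rather than pointwise, fashion, in the same spirit as \eqref{diffgrad}. The assumption $0<q<\tfrac12-\alpha$ is precisely what ensures both exponents $2q$ and $\tfrac12-\alpha-q$ are strictly positive, so that both contributions genuinely vanish as $\varepsilon\to 0$, which is what the subsequent quantitative P\'olya--Szeg\H o argument requires.
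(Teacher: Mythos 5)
Your proposed decomposition cannot be closed. The obstruction you flag at the end is not a technical inconvenience but a fatal one: on the set $I$ the gradient of $w^\sharp$ is, by the very definition \eqref{I}, deficient relative to that of $\tilde V$, and $|\nabla w^\sharp|$ has \emph{no} lower bound there. Consequently $\int_{I\cap(0,\tau_q)}\int_{\{w^\sharp=t\}}\frac{1}{|\nabla w^\sharp|}\,d\mathcal{H}^{n-1}\,dt = \int_{I\cap(0,\tau_q)}(-\mu_{w}'(t))\,dt$ can be as large as the measure of the whole domain, and neither the perimeter bounds $2\sqrt{\pi}\varepsilon^q \leq P(\{w^\sharp>t\}) \leq 2\sqrt{\pi}$ nor the size estimate $\abs{I}\lesssim\varepsilon^{1/2-\alpha}$ can compensate for an integrand $1/|\nabla w^\sharp|$ that is uncontrolled from above. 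Your $\tau_q$-cutoff also does not contribute the $K\varepsilon^{2q}$ term in the way you expect; in the paper that term has a completely different origin.

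The paper's proof evades this by never estimating $\abs{w^{\sharp-1}(I)\cap\{0<w^\sharp<\tilde u^*(\abs{S})\}}$ directly in terms of $w^\sharp$. Instead one first establishes the pointwise inequality $-\mu_{\tilde u}'(t)\geq -\mu_{\tilde V}'(t)$ (from the isoperimetric step in the proof of Theorem \ref{buonocore}), then uses it to show $\abs{w^{\sharp-1}(I^c)\cap\{0<w^\sharp<\tilde u^*(\abs{S})\}}\geq\abs{\tilde V^{-1}(I^c)\cap(G^\sharp\setminus S^\sharp)}$, and by complementation (using $\abs{\{0<w^\sharp<\tilde u^*(\abs{S})\}}\leq\abs{G^\sharp\setminus S^\sharp}$) obtains $\abs{w^{\sharp-1}(I)\cap\{0<w^\sharp<\tilde u^*(\abs{S})\}}\leq\abs{\tilde V^{-1}(I)\cap(G^\sharp\setminus S^\sharp)}$. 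The problem is thereby transferred to the \emph{explicit} function $\tilde V$, for which $|\nabla\tilde V|(x)=|x|$. One then splits the preimage of $I$ along $\{|\nabla\tilde V|\leq\delta\}$ versus $\{|\nabla\tilde V|>\delta\}$: the first piece is bounded by $K\delta^2$ (this is the origin of the $\varepsilon^{2q}$ term, not the cutoff $\tau_q$), and on the second piece the explicit pointwise lower bound $|\nabla\tilde V|>\delta$ makes the coarea integral controllable by $\frac{2\sqrt{\pi}}{\delta}\abs{I}\lesssim\frac{\varepsilon^{1/2-\alpha}}{\delta}$. Choosing $\delta=\varepsilon^q$ gives the stated bound. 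The essential idea you are missing is this transfer to $\tilde V$ via complementation; without it there is no handle on the coarea integral over $I$.
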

\begin{proof}
    Arguing as in the proof of Theorem \ref{buonocore}, by the isoperimetric inequality and Fleming-Rishel formula we get
    \begin{equation*}
        \frac{1}{4\pi}(-\mu_{\tilde V}'(t)) =\frac{1}{4\pi}(-\mu_{\tilde V}'(t))\nu^{\ast-1}(\mu_{\tilde V}(t))\leq\frac{1}{4\pi}(-\mu_{\tilde u}'(t))\zeta^{-1}(\mu_{\tilde u}(t)) \leq \frac{1}{4\pi}(-\mu_{\tilde u}'(t)), \qquad \forall t \in I^c.
    \end{equation*}
    Note that the first equality holds because $\nu^{*-1}$ vanishes whenever the superlevel set intersects a hole. However, this cannot occur for all $t \in I^c$, because the gradient of $u$ is close to that of $V$, and since $V$ has a nonzero gradient, $u$ cannot have a vanishing gradient either; hence the superlevel sets do not lie entirely inside any hole. Therefore, we obtain
    \begin{equation*}
        -\mu_{\tilde u}'(t)\geq-\mu_{\tilde V}'(t)\quad\forall t\in I^c,
    \end{equation*}
    which gives us
    \begin{equation}\label{immagineic}
    \begin{aligned}
        \abs{w^{\sharp-1}(I^c) \cap \{0 <w^\sharp < {\tilde u}^\ast(\abs{S})\}}&=\abs{w^{\sharp-1}(I^c) \cap \left\{ \abs{ \nabla w^\sharp}=0 \right\}} + \int_{I^c}-\mu_{\tilde u}'(t)\,dt \\ &\geq \int_{I^c}-\mu_{\tilde u}'(t)\,dt\\&\geq\int_{I^c}-\mu_{\tilde V}'(t)\,dt=\abs{\tilde V^{-1}(I^c) \cap (G^\sharp\setminus S^\sharp)}.
    \end{aligned}
    \end{equation}    So,  we remark that by definition $w^{\sharp-1}([0, \tilde u ^\ast (\abs{S})])=G^\sharp\cap \{0 <w^\sharp < {\tilde u}^\ast(\abs{S})\}$ where $I$ is defined in \eqref{I} and using \eqref{immagineic} we have that
    \begin{equation*}
    \begin{aligned}
         \abs{w^{\sharp-1}(I) \cap \{0 <w^\sharp < {\tilde u}^\ast(\abs{S})\}}&=\abs{G^\sharp \cap  \{0 <w^\sharp < {\tilde u}^\ast(\abs{S})\}}-\abs{w^{\sharp-1}(I^c) \cap \{0 <w^\sharp < {\tilde u}^\ast(\abs{S})\}}\\ &\leq \abs{G^\sharp \setminus  S^\sharp}-\abs{\tilde V^{-1}(I^c)\cap G^\sharp\setminus S^\sharp}=\abs{\tilde V^{-1}(I)\cap G^\sharp\setminus S^\sharp},
    \end{aligned}
    \end{equation*}
    but the last term can be bounded as follows
    \begin{equation*}
        \begin{aligned}
           \abs{\tilde V^{-1}(I)\cap G^\sharp\setminus S^\sharp}&=\int_I\int_{\tilde V = t}\frac{1}{\abs{\nabla \tilde V}}\,d\mathcal{H}^{n-1}\,dt\\&=\int_I\int_{\{\tilde V = t\}\cap\{\abs{\nabla \tilde V}\leq\delta\}}\frac{1}{\abs{\nabla \tilde V}}\,d\mathcal{H}^{n-1}\,dt+\int_I\int_{\{\tilde V = t\}\cap\{\abs{\nabla \tilde V}>\delta\}}\frac{1}{\abs{\nabla \tilde V}}\,d\mathcal{H}^{n-1}\,dt\\&\leq\abs{\{\abs{\nabla \tilde V}\leq\delta\} \cap (G^\sharp \setminus S^\sharp)}+\frac{1}{\delta}\int_I2\pi\left(\frac{\mu_{\tilde V}(t)}{\pi}\right)^\frac{1}{2}\,dt\\&\leq\abs{\{\abs{\nabla \tilde V}\leq\delta\}\cap (G^\sharp \setminus S^\sharp)}+\frac{2\sqrt{\pi}}{\delta}\abs{I}\\&\leq K \delta^2+\frac{2\sqrt{\pi}}{\delta}\varepsilon^{\frac 14}.
        \end{aligned}
    \end{equation*}

    The thesis follows by choosing $\delta=\varepsilon^\frac{1}{12}$.
\end{proof}

 \begin{prop}\label{Musharp}
 Let $w$ be the function defined in \eqref{doppiav} and let $M_{w^\sharp}$ be the quantity defined in \eqref{eumu}. Ler $r>0$ the positive constant given in Theorem \ref{polya_quant}. Then,  it holds
    
   $$ M_{w^\sharp}(\varepsilon^r)\leq K\varepsilon^{\theta},$$
   where $K$ is a positive constant and $\theta=\theta(r)>0$.
\end{prop}
\begin{proof}
We have 
\begin{align*}
   M_{w^\sharp}(\delta) =& \abs{\{\nabla w^\sharp < \delta\} \cap \{0 <w^\sharp < {\tilde u}^\ast(\abs{S})\}} = \abs{A} \\=&\abs{A\cap w^{\sharp-1}(I^c\cap (0,t_{\varepsilon})}\\
    &+\abs{A\cap w^{\sharp-1}(I\cap (0,t_{\varepsilon})}\\
    &+\abs{A\cap w^{\sharp-1}(t_{\varepsilon}, +\infty)}\\
    \leq &\abs{A\cap w^{\sharp-1}(I^c\cap (0,t_{\varepsilon})}
\\
&+\abs{w^{\sharp-1}(I) \cap \{0 <w^\sharp < {\tilde u}^\ast(\abs{S})\}}  
 +\mu_w(t_{\varepsilon}) 
\\ \leq &\pi\left(\delta+\frac{\varepsilon^{\frac18}}{2\sqrt{\pi}}\right)^2 + K \varepsilon^{\frac16}+2\sqrt{\pi}\varepsilon^{\frac16}+ \varepsilon^{\frac14}.
\end{align*}
  So, evaluating $M_{u^\sharp}(\delta)$ at $\delta=\varepsilon^r$, we obtain
  $$M_{w^\sharp}(\varepsilon^r)\leq \pi\left(\varepsilon^r+\frac{\varepsilon^{\frac18}}{2\sqrt{\pi}}\right)^2 + K \varepsilon^{\frac16}+{2\sqrt{\pi}}\varepsilon^{\frac 16} + \varepsilon^{\frac14},$$
 so we can conclude, by setting

 $$\theta=\min\left\{ 2r, \frac 16 \right\},$$
obtaining the desired claim for a positive constant $K$.
\end{proof}
Hence, combining the previous results we have
\begin{theorem}
\label{anellia}
    Let $u$ be the maximizer of \eqref{torsione} and $\tilde u$ its constant extension. Then, there exist two positive constants $\tilde \theta$  and $\tilde C(\abs{G},\abs{S})$ such that
    \[
    \inf_{x_0\in\R^2}\norma{\tilde u-\tilde u^\sharp(\cdot+x_0)}_{L^1(\R^2)}\leq\tilde C\varepsilon^{\tilde \theta}.
    \]
\end{theorem}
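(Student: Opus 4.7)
The plan is to apply the quantitative P\'olya--Szeg\H o inequality of Theorem \ref{polya_quant} to the truncation $w := \min\{\tilde u,\tilde u^\ast(\abs{S})\}$ from \eqref{doppiav}, and then to transfer the resulting $L^1$-closeness from $w$ to $\tilde u$ via the small truncation error. As in Section \ref{section6} I normalize $\abs{G}=1$ and set $\varepsilon := T(\Omega^\OO)-T(\Omega)\le \varepsilon_0$; the general case follows by absorbing the scaling into the constants.

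The first step is to show that the symmetrization deficit $E(w)$ defined in \eqref{eumu} decays as a power of $\varepsilon$. Lemma \ref{lem_grad} gives
\[
E(w) = \frac{\int_{G}\abs{\nabla w}^2}{\int_{G^\sharp}\abs{\nabla w^\sharp}^2} - 1 \le \frac{1}{\int_{G^\sharp}\abs{\nabla w^\sharp}^2}\cdot\frac{6\abs{G}}{\sqrt{2\pi}}\,\varepsilon^{1/2}.
\]
Since $\int\abs{\nabla w^\sharp}^2=\int\abs{\nabla w}^2\to T(\Omega^\OO)>0$ as $\varepsilon\to 0$, up to shrinking $\varepsilon_0$ I obtain a uniform lower bound on the denominator, and therefore $E(w)\le c_1\,\varepsilon^{1/2}$.

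The second step is to combine this with Proposition \ref{Musharp}, which provides, for every $0<r<\tfrac12$, an exponent $\theta(r)>0$ such that $M_{w^\sharp}(\varepsilon^r)\le K\,\varepsilon^{\theta(r)}$. Since $E(w)^r\lesssim \varepsilon^{r/2}$, choosing $r$ (and the inner parameters $\alpha,\beta,q$ of Section \ref{section6}) small enough that $\varepsilon^{r/2}$ lies in the range where $M_{w^\sharp}$ has been estimated, I obtain $M_{w^\sharp}(E(w)^r)\le K'\varepsilon^{\theta'}$ for some $\theta'>0$. Plugging these two ingredients into Theorem \ref{polya_quant} (with the $-$ sign, since $\tilde u\ge 0$) and observing that the prefactor $\abs{\{w>0\}}^{1/n+1/2}\norma{\nabla w^\sharp}_2$ is bounded above by a constant depending only on $\abs{G}$ and $\abs{S}$, yields
\[
\inf_{x_0\in\R^n}\norma{w-w^\sharp(\cdot+x_0)}_{L^1(\R^n)} \le \tilde C_1\,\varepsilon^{\theta_1}
\]
for some $\theta_1>0$.

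The final step is to transfer the estimate from $w$ to $\tilde u$. Property (3) of $w$ combined with the claim \eqref{claimbho} gives $\tilde V(\abs{S})-\tilde u^\ast(\abs{S})\le (2\pi)^{-1/2}\varepsilon^{1/2}$, so
\[
\norma{\tilde u-w}_{L^1} \le \abs{S}\bigl(\tilde V(\abs{S})-\tilde u^\ast(\abs{S})\bigr) \le \frac{\abs{S}}{\sqrt{2\pi}}\,\varepsilon^{1/2},
\]
and by equidistribution of $\tilde u$ with $\tilde u^\sharp$ the same bound holds for $\norma{\tilde u^\sharp-w^\sharp}_{L^1}$. A triangle inequality, together with setting $\tilde\theta := \min\{\theta_1,\tfrac12\}$, then concludes the proof. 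The main obstacle I expect is the bookkeeping of the exponents $\alpha,\beta,q,r$ so that the final $\tilde\theta$ stays strictly positive, combined with securing the uniform lower bound on $\norma{\nabla w^\sharp}_2$ needed to promote the Dirichlet-energy deficit of Lemma \ref{lem_grad} to a power decay of the \emph{ratio} $E(w)$ entering Theorem \ref{polya_quant}.
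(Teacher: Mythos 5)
Your proposal follows essentially the same route as the paper: truncate at level $\tilde u^\ast(|S|)$ to get $w$, control $E(w)$ via Lemma \ref{lem_grad}, control $M_{w^\sharp}$ via Proposition \ref{Musharp}, plug both into the quantitative P\'olya--Szeg\H o inequality, and then transfer the $L^1$-closeness from $w$ to $\tilde u$ using the sandwich $w \le \tilde u \le w + \chi_{\tilde S}(\tilde V(|S|) - \tilde u^\ast(|S|))$ together with the bound \eqref{claimbho}. The only slip is cosmetic: $\int|\nabla w^\sharp|^2 = \int|\nabla w|^2$ is not an equality (P\'olya--Szeg\H o gives $\le$), but your intended conclusion --- that $\|\nabla w^\sharp\|_2^2$ is bounded away from $0$ for small $\varepsilon$, so the energy-gap estimate of Lemma \ref{lem_grad} yields a power decay of the ratio $E(w)$ --- is correct and in fact slightly more carefully argued than in the paper.
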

\begin{proof}
  By the definition of  $w$ in \eqref{doppiav}, we have that 
    \begin{equation*}
        w(x)\leq u(x)\leq w(x)+\chi_{\tilde S}(\tilde V(\abs{S})-\tilde u^*(\abs{S})),
    \end{equation*}
    and similarly
    \begin{equation*}
        w^\sharp(x)\leq u^\sharp(x)\leq w^\sharp(x)+\chi_{ S^\sharp}(\tilde V(\abs{S})-\tilde u^*(\abs{S})),
    \end{equation*}
    where $\tilde S=D_{\tilde u}(\abs{S})$ and $\tilde V$ is the extension of the solution to \eqref{probsimm}. Hence
    \begin{equation*}
       w(x)- w^\sharp(x)-\chi_{ S^\sharp}(\tilde V(\abs{S})-\tilde u^*(\abs{S}))\leq u(x)-u^\sharp(x)\leq     w(x)- w^\sharp(x)+\chi_{ \tilde S}(\tilde V(\abs{S})-\tilde u^*(\abs{S})),
    \end{equation*}
    and, by integrating, we have
    \begin{equation}\label{umenousharp}
        \int_{\R^2}\abs{\tilde u-\tilde u^\sharp}\,dx\leq \int_{\R^2}\abs{w-w^\sharp}\,dx+\abs{S}(\tilde V(\abs{S})-\tilde u^*(\abs{S})).
    \end{equation}
    On the other hand, by Theorem \ref{polya_quant} we have that there exist two positive constants $r,s$ such that
    \begin{equation}\label{wmenowsharp}
        \inf_{x_0\in\R^2}\norma{w-w^\sharp(\cdot+x_0)}_{L^1(\R^2)}\leq C\left[M_{w^\sharp}(E(w)^r)+E(w)\right]^s,
    \end{equation}
    but now we are in position to apply estimates of lemmas \ref{lem_grad} and \ref{Musharp}. Therefore, by combining \eqref{umenousharp} and \eqref{wmenowsharp}
    \begin{equation*}
        \inf_{x_0\in\R^2}\norma{\tilde u-\tilde u^\sharp(\cdot+x_0)}_{L^1(\R^2)}\leq\overline C\varepsilon^{s\theta}+\frac{1}{2\pi}\varepsilon^\frac{1}{2},
    \end{equation*}
    where the bound of the last term follows from \eqref{claimbho} and we can conclude, by setting
    \begin{equation*}
        \tilde \theta=\min\left\{s\theta,\frac{1}{2}\right\},\qquad \tilde C=\max\left\{\overline C,\frac{1}{2\pi}\right\}.
    \end{equation*}
    Note that $\tilde \theta=\tilde \theta(s,r)$ depends on $r,s$ the constant given in Theorem \ref{polya_quant}.
    
    Finally, we remove the additional assumption $\abs{G}=1$ using the rescaling properties of the torsional rigidity.
\end{proof}

By the previous theorem, the torsion function $u$ and its truncation $w$ are almost symmetric; consequently, each superlevel set is close to a ball. We now prove Theorem \ref{thm:anellia}.

\begin{proof}[Proof of Theorem \ref{thm:anellia}]
Starting from Theorem \ref{anellia}, and assuming that the infimum is attained at $x_0=0$, we have
\begin{equation*}
C\varepsilon^{\tilde\theta}
\geq \|\tilde u-\tilde u^\sharp\|_{L^1(\mathbb{R}^n)}
\geq \|w-w^\sharp\|_{L^1(\mathbb{R}^n)}
= \int_0^{\tilde u^*(|S|)} |\{w>t\}\triangle\{w^\sharp>t\}|\,dt.
\end{equation*}
Define
\begin{equation*}
L=\left\{t\in(0,\tilde u^*(|S|)):\,
|\{w>t\}\triangle\{w^\sharp>t\}|\geq \varepsilon^{\frac{\tilde\theta}{2}}\right\}.
\end{equation*}
Arguing as in Lemma \ref{lemmaI}, we obtain $|L|\leq C\varepsilon^{\frac{\tilde\theta}{2}}$ for some constant $C>0$. In particular,
\begin{equation}
\label{measureofT}
|\{w>t\}\triangle\{w^\sharp>t\}|
< \varepsilon^{\frac{\tilde\theta}{2}}
\qquad \forall t\in(0,\tilde u^*(|S|))\setminus L.
\end{equation}
Hence, there exists $\overline t \leq 2C\varepsilon^{\frac{\tilde\theta}{2}}$ such that
\begin{equation}
\label{epsgamm}
|\{w>\overline t\}\triangle\{w^\sharp>\overline t\}|
< \varepsilon^{\frac{\tilde\theta}{2}}.
\end{equation}
Let $B''=\{w^\sharp>0\}$. We aim to control $|G\triangle B''|$ in terms of $\varepsilon$. 
By a simple decomposition,
\begin{equation}
\label{triangG}
|G\triangle B''|
= 2|G\setminus B''|
\leq 2|\{w>\overline t\}\setminus\{w^\sharp>\overline t\}|
+ 2|\{w\leq \overline t\}|.
\end{equation}
The first term is controlled by \eqref{epsgamm}, so it remains to estimate the second one. Using \eqref{distrw}, \eqref{buonocoreisoperimetrica1}, the bound $\zeta^{-1}\leq 1$, Corollary \ref{puntualedim2}, \eqref{muprimo=4pi}, and \eqref{claimbho}, we obtain
\begin{equation}
\label{listone}
\begin{aligned}
|\{ w \leq \overline t\}|
&= |G| - \mu_{\tilde u}(\overline t) \\
&\leq |G| - |\{w \geq \tilde u^*(|S|)\}|
- \int_{\overline t}^{\tilde u^*(|S|)} (-\mu'_{\tilde u}(s))\,ds \\
&\leq |G| - |S|
- \int_{\overline t}^{\tilde u^*(|S|)}
(-\mu'_{\tilde u}(s))\,\zeta^{-1}(\mu_{\tilde u}(s))\,ds \\
&\leq |G| - |S|
- \int_{\overline t}^{\tilde u^*(|S|)}
(-\mu'_{\tilde V}(s))\,(\nu^*)^{-1}(\mu_{\tilde V}(s))\,ds \\
&= |G| - |S| - 4\pi \tilde u^*(|S|) + 4\pi \overline t \\
&= 4\pi\bigl(\tilde V^*(|S|)-\tilde u^*(|S|)\bigr) + 4\pi \overline t \\
&\leq 2\sqrt{2\pi}\,\varepsilon^{\frac12}
+ 8\pi C\,\varepsilon^{\frac{\tilde\theta}{2}}.
\end{aligned}
\end{equation}
Combining \eqref{triangG} and \eqref{listone}, and observing that $\frac{\tilde\theta}{2}\leq\frac12$, we deduce
\begin{equation*}
|G\triangle B''| \leq C_1 \varepsilon^{\frac{\tilde\theta}{2}}.
\end{equation*}
We now argue similarly to control $|S\triangle B'|$, where $B'$ is the ball with $|B'|=|S|$ concentric with $\{w^\sharp>0\}$. From \eqref{measureofT}, we can find $\overline{\overline t}$ such that
\[
0 \leq \tilde u^*(|S|)-\overline{\overline t}
\leq 2C\varepsilon^{\frac{\tilde\theta}{2}}
\]
and
\begin{equation*}
|\{w>\overline{\overline t}\}\triangle\{w^\sharp>\overline{\overline t}\}|
< \varepsilon^{\frac{\tilde\theta}{2}}.
\end{equation*}
Then
\begin{equation}
\label{assS}
\begin{aligned}
|S\triangle B'|
&\leq |S\triangle\{w>\overline{\overline t}\}|
+ |\{w>\overline{\overline t}\}\triangle\{w^\sharp>\overline{\overline t}\}| \\
&\quad + |\{w^\sharp>\overline{\overline t}\}\triangle B'|.
\end{aligned}
\end{equation}
By the property of Lebesgue measure,
\[
|\{w^\sharp>\overline{\overline t}\}\triangle B'|=\abs{\abs{\{w>\overline{\overline{t}}\}^\sharp}-\abs{B'}}=\abs{\abs{\{w>\overline{\overline{t}}\}}-\abs{S}}
\leq |S\triangle\{w>\overline{\overline t}\}|,
\]
so it suffices to estimate $|S\triangle\{w>\overline{\overline t}\}|$. Since $\tilde S \subset \{w>\overline{\overline t}\}$, we have
\begin{equation*}
\begin{aligned}
|S\triangle\{w>\overline{\overline t}\}|
&\leq |S\setminus \tilde S|
+ |\{w>\overline{\overline t}\}| - |\{w>\overline{\overline t}\}\cap S| \\
&\leq |S\setminus \tilde S|
+ |\{w^\sharp>\overline{\overline t}\}| - |S\cap \tilde S|.
\end{aligned}
\end{equation*}
Using $S=(S\cap \tilde S)\cup(S\setminus \tilde S)$, we obtain
\begin{equation*}
\begin{aligned}
|S\triangle\{w>\overline{\overline t}\}|
&\leq |S\setminus \tilde S|
+ |\{w^\sharp>\overline{\overline t}\}| - |S|
+ |S\setminus \tilde S| \\
&\leq |\{\tilde V^*>\overline{\overline t}\}| - |S|
+ 2|S\setminus \tilde S|.
\end{aligned}
\end{equation*}
Here we used the pointwise bound $w^*(s)\leq \tilde V^*(s)$ for all $s\in(0,|G|)$. Since $|\{\tilde V^*>\overline{\overline t}\}|=|G|-4\pi \overline{\overline t}$, we get
\begin{equation*}
\begin{aligned}
|S\triangle\{w>\overline{\overline t}\}|
&\leq |G|-|S|-4\pi \overline{\overline t}
+ 2|S\setminus \tilde S| \\
&= 4\pi\bigl(\tilde V^*(|S|)-\overline{\overline t}\bigr)
+ 2|S\setminus \tilde S| \\
&= 4\pi\bigl(\tilde V^*(|S|)-\tilde u^*(|S|)\bigr)
+ 4\pi\bigl(\tilde u^*(|S|)-\overline{\overline t}\bigr)
+ 2|S\setminus \tilde S| \\
&\leq 2\sqrt{2\pi}\,\varepsilon^{\frac12}
+ 2C\,\varepsilon^{\frac{\tilde\theta}{2}}
+ 2\sqrt{\pi}\,\varepsilon^{\frac12}.
\end{aligned}
\end{equation*}
Combining this with \eqref{assS}, we conclude
\begin{equation*}
|S\triangle B'| \leq C_2 \varepsilon^{\frac{\tilde\theta}{2}}.
\end{equation*}
Finally, since
\[
(G\setminus S)\triangle(B''\setminus B')
\subset (G\triangle B'') \cup (S\triangle B'),
\]
we obtain
\[
\beta(\Omega)
\leq |(G\setminus S)\triangle(B''\setminus B')|
\leq |G\triangle B''| + |S\triangle B'|
\leq C\,\varepsilon^{\frac{\tilde\theta}{2}}.
\]
The conclusion follows by setting $\bar\theta=\frac{\tilde\theta}{2}$. 
Observe that $\bar\theta$ depends on the parameters $r,s$ appearing in Theorem \ref{polya_quant}.
\end{proof}

\section{Open Problems}
\label{section7}

Despite the result obtained in this work there are still some open question such as:
\begin{itemize}
\item is it possible to find the optimal exponent in Theorems \ref{thm:dueasimmetrie} and \ref{thm:anellia}?
\item Diaz and Weinstein established upper and lower bounds for the torsional rigidity of a beam, including in the case of multiply connected domains, in terms of its second-order momentum. It would be interesting to investigate a quantified version of this result.
\end{itemize}

\subsubsection*{Acknowledgements}
We would like to thank the anonymous referee for the valuable suggestions that helped improve the final version of this paper.

We would like to thank Prof. Cristina Trombetti for the valuable advice that helped us to achieve these results.
\subsubsection*{Declarations}

\paragraph{Funding}
The authors are members of GNAMPA and they were partially supported by Gruppo Nazionale per l’Analisi Matematica, la Probabilità e le loro Applicazioni
(GNAMPA) of Istituto Nazionale di Alta Matematica (INdAM).   

\paragraph{Data Availability} All data generated or analysed during this study are included in this published article.
	
\paragraph{Competing Interests} We declare that we have no financial and personal relationships with other people or organizations.

\bibliographystyle{plain}
\bibliography{bibliografia.bib}
\Addresses
\end{document}